\documentclass[11pt, a4paper]{amsart}

\usepackage[margin=2.5cm]{geometry}

\usepackage[all, cmtip]{xy}
\usepackage{enumerate}
\usepackage[backend=bibtex, style=alphabetic, maxbibnames=99, maxcitenames=2, giveninits=true]{biblatex}
\usepackage{graphicx}
\usepackage[dvipsnames]{xcolor}

\usepackage{mathtools}
\usepackage{soul}
\usepackage{mathrsfs}
\usepackage{caption}
\usepackage{wrapfig}
\usepackage{picinpar}
\usepackage{amsfonts,amsmath,amssymb, mathrsfs}
\usepackage{upgreek}
\usepackage{relsize}
\usepackage{bm}
\usepackage{tikz-cd}
\usepackage[bookmarks=true]{hyperref}
\usepackage{pdfcomment}
\usepackage{cancel}

\hypersetup{
colorlinks=true,
linkcolor=MidnightBlue,
citecolor=OliveGreen,
urlcolor=RoyalBlue
}

\providecommand{\shorttitle}[1]{}
\providecommand{\shortauthors}[1]{}

\DeclareMathOperator{\Ugrp}{U}
\DeclareMathOperator{\aut}{Aut}

\DeclareMathOperator{\id}{Id}

\DeclareMathOperator{\GL}{GL}

\newcommand{\zz}{\textbf{Z}}

\newcommand{\cc}{\textbf{C}}
\newcommand{\pp}{\textbf{P}}
\newcommand{\dd}{\textbf{D}}
\newcommand{\mdd}{\mathbb D}

\newcommand{\zd}{\zz/d\zz}
\newcommand{\ov}{\overline}
\newcommand{\wt}{\widetilde}

\newcommand{\lab}{\left\langle}
\newcommand{\rab}{\right\rangle}

\newcommand{\artindisk}{\emph{Artin disk}}
\newcommand{\artindisks}{\emph{Artin disks}}
\newcommand{\loopdisk}{\emph{loop disk}}
\newcommand{\loopdisks}{\emph{loop disks}}

\DeclareMathOperator{\pt}{\mathsf{pt}}

\newcommand{\bnp}{B_{n,\mathcal{P}}}
\newcommand{\forgmap}{\mathcal{F}}

\numberwithin{equation}{section}

\theoremstyle{plain}
\newtheorem{theorem}{Theorem}[section]
\newtheorem{proposition}[theorem]{Proposition}
\newtheorem{lemma}[theorem]{Lemma}
\newtheorem{corollary}[theorem]{Corollary}

\newtheorem*{thmA}{Theorem A}
\newtheorem*{thmB}{Theorem B}
\newtheorem*{theorem*}{Theorem}

\theoremstyle{definition}

\newtheorem{remark}[theorem]{Remark}

\newenvironment{proof}[1][\proofname]{\par\vspace{0.2cm}\noindent\textit{#1.}\hspace{0.5em}}{\hfill$\square$\vspace{0.2cm}\par}

\author{Athira E V}
\address{Kerala School of Mathematics, Kunnamangalam, Kozhikode, Kerala 673571, India}
\email{athiraev@ksom.res.in}
\author{Pranav Haridas}
\address{Kerala School of Mathematics, Kunnamangalam, Kozhikode, Kerala 673571, India}
\email{pranav@ksom.res.in}

\subjclass[2020]{Primary 20F36; Secondary 57M12, 57K20}
\keywords{Mixed braid groups, geometric monodromy, cyclic branched covers, Pochhammer contours, multivariate Burau representation, complex reflections}

\title[Geometric Monodromy and the Multivariate Burau Representation]{Geometric Monodromy of Mixed Braid Groups and the Multivariate Burau Representation}

\begin{document}
\begin{abstract}
We study the monodromy action of the mixed braid
group $\bnp$ on the first cohomology of cyclic
branched covers of $\pp^1$, which are mutually
determined by a partition of branch points by equal
ramification. The monodromy representation splits
into irreducible representations on the $t$-eigenspaces
of the deck transformation. For each, we construct
an explicit spanning set using lifts of Pochhammer
contours and figure-eight curves, and compute the
Hermitian intersection form. The
representation factors through a reduced mixed braid
group by dropping \emph{$t$-invisible} parts of the
partition (those with trivial local monodromy).
In this reduced representation, each generator acts
by a complex reflection when the corresponding
spanning class is non-isotropic, and by a unitary
transvection when it is isotropic.
Provided $\infty$ has non-trivial local monodromy,
the factored representation is isomorphic to the
reduced multivariate Burau representation evaluated
at $t$.
\end{abstract}

\maketitle

\setcounter{tocdepth}{1}
\tableofcontents

\section{Introduction}

Let $p \colon X \to \pp^1$ be a cyclic branched cover
with deck transformation $T$ of order $d$,
and let $\mathcal{P}$ be the partition of
$\{1,\dots,n\}$ grouping its branch points
by equal exponent (every partition, by suitably choosing
exponents, yields a cyclic cover as well).  
The mixed braid group $\bnp$ (defined in
Section~\ref{subsection:mixed_braid_group_definition})
lifts to $X$ and acts on $H^1(X)$ via
pullback.  Since the
action commutes with $T$, it preserves each
$t$-eigenspace $H^1(X)_t$ for
$t \in \mu(d)$ (the $d$-th roots of unity),
yielding finite-dimensional
unitary representations
$\rho_t \colon \bnp \to
\Ugrp(H^1(X)_t)$.

For each non-trivial eigenvalue $t$, we
construct an explicit spanning set for
$H^1(X)_t$ using lifts of figure-eight curves
and Pochhammer contours, and compute the full
Hermitian intersection matrix of these
classes.  Our two main results are:

\begin{enumerate}
	\item For each generator $\sigma$ of $\bnp$, the
representation $\rho_t$ acts by
$\rho_t(\sigma)(\omega)
	      = \omega + \Lambda_\sigma
	      \langle \omega, \omega_\sigma \rangle\,\omega_\sigma$,
where $\Lambda_\sigma$ is an explicit rational
polynomial in $t$
	      (Theorem~A).
	      Once the $t$-invisible blocks are dropped, this is
	      a complex reflection when $\omega_\sigma$ is
	      non-isotropic, and a unitary transvection when it
	      is isotropic.
	\item The representation $\rho_t$ factors
through the forgetful homomorphism
dropping all $t$-invisible blocks,
and the factored representation is
isomorphic to the reduced multivariate
Burau representation evaluated at $t$,
provided $t^{k_\infty} \neq 1$
	      (Theorem~B).
\end{enumerate}

McMullen \cite{MR3020148} established this
representation framework for the full
braid group using cyclic covers with
uniform ramification. Venkataramana
\cite{gassner_inventiones} studied
representations of the pure braid group
using cyclic covers with exponents
coprime to the degree. This paper extends
both: we generalize McMullen's framework
to arbitrary cyclic covers with
non-uniform ramification, and we
generalize Venkataramana's by studying
the mixed braid group without coprimality restrictions.
Chaudhuri and
Mukherjee \cite{ChaudhuriMukherjee2026}
proved a related result for specialized
Abelian covers.

Deligne and Mostow
\cite{DeligneMostow1986} studied the
configuration space of branch points with
fractional weights $k_i/d$; the mixed
braid group appears as its fundamental
group when points with equal weights are
grouped. They used the resulting
monodromy to construct lattices in
hyperbolic space.

Donovan and Segal
\cite{DonovanSegal2015} established
that $\bnp$ acts on derived categories
of deformed surface singularities;
Bezrukavnikov and Riche
\cite{BezrukavnikovRiche2012}
constructed analogous actions for the
affine braid group on the Springer
resolution.

\medskip\noindent\textbf{Setup and main results.}
A cyclic branched cover $X \to \pp^1$ of
degree $d$ is defined by
$w^d = \prod_{i=1}^n (z-b_i)^{k_i}$,
with $0 < k_i < d$ and
$\gcd(d,k_1,\dots,k_n)=1$.
The branch locus is
$B = \{b_1,\dots,b_n\}$ together with
$\infty$ when
$k_\infty \not\equiv 0 \pmod{d}$, where
$k_\infty \equiv -\sum_{i=1}^n k_i \pmod{d}$.

Let $B_n$ be the braid group on $n$ strands,
generated by half-twists
$\sigma_1,\dots,\sigma_{n-1}$.
Let $\Psi \colon B_n \to S_n$ be the canonical
permutation map assigning to each braid its induced
endpoint permutation $\tau=\Psi(\sigma)$. A braid
lifts to a homeomorphism of $X$ iff
$k_{\tau(i)} \equiv u k_i \pmod{d}$ for some
$u \in \zd^*$ \cite{Ghaswala}. Such a lift
pulls back to a linear automorphism of
$H^1(X)$, yielding a representation of the
liftable subgroup of $B_n$. When the $k_i$ are not all equal,
a braid that permutes points with different
exponents may fail to lift. The pure braid
group $P_n$ always lifts, but excludes braids
that permute points with equal exponent.

The natural intermediate object is the
\emph{mixed braid group}. Partition
$\{1,\dots,n\}$ into $m$ blocks
$\mathcal{P} = \{P_1,\dots,P_m\}$ by
grouping indices with equal exponent
($k_i = k_j$ iff $i,j$ belong to the same
block). Assign to each block $P_r$ an
exponent
$0 < k_1 < k_2 < \cdots < k_m < d$, and let
$\pt \colon \{1,\dots,n\} \to \{1,\dots,m\}$
send a branch point index to its block;
thus the exponent of $b_i$ is
$k_{\pt(i)}$.
The \textbf{mixed braid group} $\bnp \subset B_n$
is the preimage under $\Psi$ of the subgroup of $S_n$
that preserves every partition block. Consequently,
for $\sigma\in\bnp$ with $\tau=\Psi(\sigma)$,
$k_{\pt(\tau(i))}=k_{\pt(i)}$, and the lifting
criterion holds with $u=1$.

By \cite[Theorem~4]{MR1465028}, $\bnp$ is generated
by two types of mapping class elements. An Artin generator
$\sigma_i$ operates within a single partition block. Its
support, called an \artindisk, contains the adjacent branch
points $b_i$ and $b_{i+1}$ of that block, and it acts by a
right, counterclockwise half-twist interchanging them.
A loop generator $A_{i,j}$ operates between distinct blocks.
Its support, called a \loopdisk, contains the last branch
point of $P_i$ and the first branch point of $P_j$, and it
acts by a positive, right full twist around the two points.
See Section~\ref{section:mixed_braid_group_lifts} for details.

\medskip\noindent
Every $\sigma \in \bnp$ lifts canonically
to $\wt\sigma \colon X \to X$ commuting with $T$
(Propositions~\ref{prop:lift-commutes-T}
and~\ref{prop:unique-lift-near-infty}). The pullback
$(\wt\sigma^{-1})^*$ preserves each $t$-eigenspace
$H^1(X)_t$, yielding a unitary representation
$\rho_t \colon \bnp \to
\Ugrp(H^1(X)_t)$ for each
$t \in \mu'(d) = \mu(d)\setminus\{1\}$.
We call a block $P_i$ \emph{$t$-invisible}
if the $t$-eigenspace of its local
cohomology vanishes
(Proposition~\ref{Prop:Dim_Disk_2_branch_points};
see also Remark~\ref{rem:t_invisible_blocks}).
The Chevalley--Weil theorem
\cite{ChevalleyWeil} gives
$\dim H^1(X)_t = n-1-r_t$
(or $n-2-r_t$ if $t^{k_\infty}=1$),
where $r_t$ is the sum of $|P_i|$
over all $t$-invisible blocks
(Proposition~\ref{prop:dim_cyclic_cover}).

We construct $n-1-r_t$ explicit classes
$\omega_1,\dots,\omega_{n-1-r_t}$ spanning $H^1(X)_t$,
one for each adjacent pair of consecutive visible branch
points. Each class is compactly supported in the corresponding
\artindisk\ or \loopdisk. For an \artindisk, we take a
figure-eight base contour enclosing its two branch points; for
a \loopdisk, we take a Pochhammer base contour enclosing its
two branch points. We lift the contour to the cyclic cover $X$;
the lift has several components. An $\ov t$-polynomial linear
combination of these components produces a $1$-cycle in $X$,
whose Poincar\'e dual is the class $\omega_k$ for that pair.
When a contiguous run of $t$-invisible blocks
$P_j,\dots,P_l$ occurs, we omit all classes whose support
intersects those blocks and instead insert a single general
loop class, supported on a disk stretching from the branch
point just before $P_j$ to the branch point just after $P_l$.
These classes span $H^1(X)_t$; when $t^{k_\infty}=1$, there
is one linear relation among them
(Proposition~\ref{thm:spanning_via_intersection}). Their
Hermitian intersection matrix is tridiagonal
(Proposition~\ref{prop:spanning_intersections}).

To study the representation, fix a
generator $\sigma$ of $\bnp$ and let
$\omega_\sigma$ be the class obtained by
the same contour-lift construction described
above, applied to the support disk of
$\sigma$. Since
$\sigma$ acts as the identity outside
its support disk, it fixes every
spanning class whose support is disjoint
from that disk.
The action of $\sigma$ on $H^1(X)_t$ is therefore determined
entirely by its effect on the few spanning classes whose
supports intersect its support disk. These effects are computed
by evaluating the geometric monodromy action on the lifted
contours via the intersection pairing
(see Lemmas~\ref{lem:artin_self_action},
\ref{lem:mixed_self_action}, and
\ref{lem:adjacent_spanning_class_action}).

\begin{thmA}[The Irreducible Representation of $\bnp$]
Let $t \in \mu'(d)$ be a non-trivial $d$-th root of unity, and let
$\rho_t \colon \bnp \to \Ugrp(H^1(X)_t, \langle\cdot,\cdot\rangle)$
be the monodromy representation. For a generator $\sigma$ of $\bnp$,
let $\omega_\sigma$ be the associated class supported in the support
disk of $\sigma$. Then
	\begin{align*}
		\rho_t(\sigma)(\omega)
		= \omega + \Lambda_\sigma \langle \omega, \omega_\sigma \rangle\,\omega_\sigma
	\end{align*}
for all $\omega \in H^1(X)_t$, where the coefficient
	\begin{align*}
		\Lambda_\sigma
		=\frac{\lambda_\sigma-1}
		{\langle\omega_\sigma,\omega_\sigma\rangle}
	\end{align*}
evaluates to the polynomial
	\begin{align*}
		\Lambda_\sigma = \begin{cases}
			           \sqrt{-1}\,(1-t^{k_{\pt(i)}})
			           & \text{if } \sigma = \sigma_i \text{ is an Artin generator}, \\[4pt]
			           \sqrt{-1}\,(1-t^{k_i})(1-t^{k_j})
			           & \text{if } \sigma = A_{i,j} \text{ is a loop generator}.
		           \end{cases}
	\end{align*}
Moreover, $\rho_t$ is irreducible.
\end{thmA}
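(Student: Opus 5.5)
We prove the reflection formula first, generator by generator, and then deduce irreducibility from it.

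Fix a generator $\sigma$ and its class $\omega_\sigma$, supported in the support disk $D_\sigma$ of $\sigma$. First we show that $\rho_t(\sigma)$ fixes the orthogonal complement $\omega_\sigma^\perp$ and has $\omega_\sigma$ as an eigenvector. By Proposition~\ref{thm:spanning_via_intersection}, $H^1(X)_t$ is spanned by the classes $\omega_k$, so it suffices to handle these. Classes with support disjoint from $D_\sigma$ are fixed, since $\wt\sigma$ is the identity off $p^{-1}(D_\sigma)$. By Proposition~\ref{prop:spanning_intersections} the intersection matrix is tridiagonal, so these classes are exactly the $\omega_k$ orthogonal to $\omega_\sigma$, apart from $\omega_\sigma$ itself (if it is a spanning class) and its at most two neighbours. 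Lemma~\ref{lem:artin_self_action} and Lemma~\ref{lem:mixed_self_action} give $\rho_t(\sigma)\omega_\sigma=\lambda_\sigma\omega_\sigma$. The eigenvalue is $\lambda_\sigma=-t^{k_{\pt(i)}}$ for a half-twist, up to the sign convention fixed there. For the full twist $A_{i,j}$, $\lambda_\sigma$ is obtained by pushing the Pochhammer lift around the two points.

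Lemma~\ref{lem:adjacent_spanning_class_action} computes the neighbours: for $\omega_k$ adjacent to $\omega_\sigma$ it gives $\rho_t(\sigma)\omega_k=\omega_k+c_k\,\omega_\sigma$. Set $\Lambda_\sigma=(\lambda_\sigma-1)/\langle\omega_\sigma,\omega_\sigma\rangle$ when $\omega_\sigma$ is non-isotropic. Then $\omega\mapsto\omega+\Lambda_\sigma\langle\omega,\omega_\sigma\rangle\omega_\sigma$ sends $\omega_\sigma\mapsto\lambda_\sigma\omega_\sigma$ and fixes $\omega_\sigma^\perp$. So it agrees with $\rho_t(\sigma)$ on the spanning set once we check $c_k=\Lambda_\sigma\langle\omega_k,\omega_\sigma\rangle$. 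That check is a direct comparison of the coefficient from Lemma~\ref{lem:adjacent_spanning_class_action} with the tridiagonal entries. Because both sides are linear, the possible linear relation among the $\omega_k$ when $t^{k_\infty}=1$ causes no trouble.

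Next we evaluate $\Lambda_\sigma$. For an Artin generator, the self-intersection of the figure-eight lift is, up to normalisation, $\langle\omega_\sigma,\omega_\sigma\rangle=\sqrt{-1}\,(1+t^{k})/(1-t^{k})$-type, or the normalised equivalent from Proposition~\ref{prop:spanning_intersections}. Dividing $-t^k-1$ by it yields $\sqrt{-1}(1-t^k)$. The loop case is analogous, with the Pochhammer self-intersection involving $(1-t^{k_i})(1-t^{k_j})$ in the denominator, and gives $\sqrt{-1}(1-t^{k_i})(1-t^{k_j})$. These closed forms are polynomials, so the formula extends by continuity, or by a direct check via the same lemmas, to the isotropic case $\langle\omega_\sigma,\omega_\sigma\rangle=0$. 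There $\lambda_\sigma=1$ and the map is a transvection. We expect this matching of normalisations and signs to be the main obstacle: the ratio $(\lambda_\sigma-1)/\langle\omega_\sigma,\omega_\sigma\rangle$ must simplify to a polynomial independent of neighbouring blocks, and this has to be checked separately for $t$-invisible neighbours, where general loop classes occur.

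Finally, irreducibility. Let $W\subseteq H^1(X)_t$ be a nonzero invariant subspace. For every generator, $(\rho_t(\sigma)-1)W\subseteq W\cap\cc\,\omega_\sigma$. If some $w\in W$ has $\langle w,\omega_\sigma\rangle\neq0$ with $\Lambda_\sigma\neq0$, then $\omega_\sigma\in W$. Since $t\neq1$ and we drop $t$-invisible blocks, $\Lambda_\sigma\neq0$ for the generators attached to visible points. The tridiagonal matrix has nonzero off-diagonal entries between consecutive visible classes, so the graph is a connected path, and $\omega_\sigma\in W$ propagates to every spanning class, giving $W=H^1(X)_t$.

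In the remaining case $W$ is orthogonal to every spanning class, so $W$ lies in the radical of the form. We then show the radical is zero by nondegeneracy of the Hermitian form on $H^1(X)_t$ (Poincaré duality, since $t\neq1$). Hence $W=0$, a contradiction, and $\rho_t$ is irreducible.
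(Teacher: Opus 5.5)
Your outline follows the paper's. You check the formula on the spanning set: disjoint supports are fixed, $\omega_\sigma$ is an eigenvector by Lemmas~\ref{lem:artin_self_action} and~\ref{lem:mixed_self_action}, and intersecting classes are handled by Lemma~\ref{lem:adjacent_spanning_class_action}. You then read off $\Lambda_\sigma$ from the self-pairings and get irreducibility from the connected path of non-zero pairings; that irreducibility paragraph is exactly the paper's argument. However, the step you yourself call the main obstacle is never carried out, and the mechanisms you offer for it are either vacuous or fail.

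Lemma~\ref{lem:adjacent_spanning_class_action} already \emph{is} the identity $\rho_t(\sigma)\omega_k=\omega_k+\Lambda_\sigma\langle\omega_k,\omega_\sigma\rangle\omega_\sigma$, valid at every visible $t$. There is no independently computed coefficient $c_k$ to compare with the tridiagonal entries; the explicit matrices of Section~\ref{section:multivariate_burau_identification} are consequences of Theorem~A. If you simply invoke that lemma, your proof reduces to the paper's short verification, and your $c_k$-check and continuity remarks are superfluous. If you do not invoke it, you must supply its content.

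For non-isotropic $\omega_\sigma$ the missing idea is short. Since $\wt\sigma$ is the identity off a compact subset of $\wt\dd_\sigma$, the class $\rho_t(\sigma)\omega-\omega$ lies in $H^1_c(\wt\dd_\sigma)_t=\cc\,\omega_\sigma$ (Proposition~\ref{Prop:Dim_Disk_2_branch_points}) for \emph{every} $\omega$. Unitarity together with $\rho_t(\sigma)\omega_\sigma=\lambda_\sigma\omega_\sigma$ then forces $\nu(\omega)\langle\omega_\sigma,\omega_\sigma\rangle=(\lambda_\sigma-1)\langle\omega,\omega_\sigma\rangle$, so no comparison of coefficients is needed.

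The isotropic case is a genuine gap. There $\lambda_\sigma=1$, and the relation above reads $0=0$. Expanding $\langle\rho_t(\sigma)\omega,\rho_t(\sigma)\omega'\rangle=\langle\omega,\omega'\rangle$ only gives $\nu(\omega)=c\,\langle\omega,\omega_\sigma\rangle$ for some purely imaginary constant $c$, possibly $c=0$. So neither the value of $\Lambda_\sigma$, nor the transvection claim, nor irreducibility at such $t$ follows.

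``Extends by continuity'' has no content here. For fixed $d$, $t$ runs over finitely many roots of unity, and $\rho_t$ at an isotropic $t$ is not a limit of other $\rho_{t'}$ unless you first know its coefficients are universal functions of $t$. Proving that is the real work in the paper:
\begin{enumerate}
\item It pairs with a neighbouring spanning class $\omega'$, for which $\langle\omega_\sigma,\omega'\rangle=\pm\sqrt{-1}(1-t^{\pm k})^{-1}$ is a unit of $R=\zz[\sqrt{-1}][t^{\pm1}][(1-t^{k_i})^{-1}]$.
\item Using the crossing sum~\eqref{eqn:crossing_sum}, it shows that $\nu(\omega)$ is a $d$-independent element of $R$.
\item Covers of degree $dp$ ($p$ prime, $p>2\sum_r k_r$) supply infinitely many $t$ with all blocks visible and $\omega_\sigma$ non-isotropic. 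Hence the unitarity relation holds identically in $R$.
\item Dividing by $\langle\omega_\sigma,\omega_\sigma\rangle\neq0$ in $\operatorname{Frac}(R)$ gives $\nu(\omega)=\Lambda_\sigma\langle\omega,\omega_\sigma\rangle$ in $R$, hence at every visible $t$.
\end{enumerate}

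Two minor corrections. For a general loop generator $A_{i,j}$ with visible intermediate blocks, $\omega_{i,j}$ is not a spanning class, and $D_{i,j}$ meets four spanning supports ($\omega_{h_i-1},\omega_{h_i},\omega_{h_{j-1}},\omega_{h_{j-1}+1}$), not at most two. Also, no separate check for invisible neighbours is needed, since $\Lambda_\sigma$ depends only on $\lambda_\sigma$ and $\langle\omega_\sigma,\omega_\sigma\rangle$.
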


After cancellation, $\Lambda_\sigma$ extends
polynomially across $t^{k_i}=-1$. Hence the
reflection formula remains defined when
$\langle\omega_\sigma,\omega_\sigma\rangle=0$.

A distinguishing feature of our setting is the presence of
\emph{$t$-invisible blocks}: blocks $P_i$ whose local
monodromy evaluates to the identity on the $t$-eigenspace
($t^{k_i} = 1$). When $t^{k_i}=1$, the deck
transformation acts as the identity on the
$t$-eigenspace, so every cohomology class
supported near $P_i$ vanishes.
The representation $\rho_t$ factors through
the forgetful map that drops all strands
indexed by $t$-invisible blocks, inducing
a representation $\bar\rho_t$ of the reduced
mixed braid group on the remaining
distinguished points.
A generator whose support disk meets a $t$-invisible
block acts as the identity. For $\bar\rho_t$, where no
block is $t$-invisible, every generator acts by a complex
reflection when $\omega_\sigma$ is non-isotropic, and by a
unitary transvection when $\omega_\sigma$ is isotropic.

This factorization is the key to identifying $\rho_t$ with
the classical multivariate Burau representation, defined
concretely via Fox calculus.

\begin{thmB}[Isomorphism with the Multivariate Burau Representation]
Let $t \in \mu'(d)$ with $t^{k_\infty} \neq 1$.
The geometric representation $\rho_t$ factors through the
forgetful homomorphism dropping all $t$-invisible blocks,
and the factored representation $\bar{\rho}_t$ is isomorphic
to the reduced multivariate Burau representation $\wt\beta_t$
of the reduced mixed braid group.
\end{thmB}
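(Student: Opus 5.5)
We prove Theorem~B in two stages: first the factorization, then the identification with $\wt\beta_t$ on generators.

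\emph{Factorization.} Let $\forgmap$ be the forgetful homomorphism from $\bnp$ to the reduced mixed braid group on the points of the $t$-visible blocks. It is surjective: each generator of the reduced group is the image of an Artin generator of a visible block, or of a loop generator joining two visible blocks that are adjacent once the invisible blocks are deleted. We show that $\ker\forgmap$ acts trivially on $H^1(X)_t$. The kernel is normally generated by two kinds of elements: braids moving only strands of invisible blocks, and full twists of an invisible strand around a visible one, i.e.\ loop generators $A_{i,j}$ with $P_i$ or $P_j$ invisible, together with their conjugates. Theorem~A handles the loop generators directly. If, say, $t^{k_i}=1$, then $\Lambda_{A_{i,j}}=\sqrt{-1}(1-t^{k_i})(1-t^{k_j})=0$, so $\rho_t(A_{i,j})=\mathrm{id}$. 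The same holds for Artin generators inside an invisible block, since $\Lambda_{\sigma_i}=\sqrt{-1}(1-t^{k_{\pt(i)}})=0$. Conjugates of these are trivial as well, so $\rho_t$ descends to $\bar\rho_t$ on the reduced group. For safety we would also check that the kernel of $\forgmap$ is normally generated by the images of these generators, using the presentation of \cite[Theorem~4]{MR1465028}.

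\emph{Building the isomorphism.} Once all blocks are visible, we build a linear map $\Phi\colon H^1(X)_t\to V_t$, where $V_t$ is the space of the reduced multivariate Burau representation evaluated at $t$. Write $\bar n=n-r_t$ for the number of visible points.
\begin{itemize}
\item Since $t^{k_\infty}\neq1$, Proposition~\ref{thm:spanning_via_intersection} shows that the classes $\omega_1,\dots,\omega_{\bar n-1}$ (including the general loop classes bridging invisible runs) form a basis, and $\dim H^1(X)_t=\bar n-1=\dim V_t$.
\item On the Burau side we use the standard basis coming from Fox calculus on the cover of the punctured disk: the lifts of arcs $e_k$ joining consecutive visible punctures. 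Equivalently, $V_t$ is the twisted homology $H_1(D\setminus B,\,\partial;\mathcal L_t)$ with the local system assigning monodromy $t^{k_{\pt(i)}}$ around $b_i$.
\item We send $\omega_k$ to the class of the lifted Pochhammer or figure-eight contour, viewed in twisted homology. This contour equals $(1-t^{k_a})(1-t^{k_b})$ or $(1-t^{k_a})$ times the arc $e_k$, up to a unit. Hence we set $\Phi(\omega_k)=c_k e_k$ with these explicit scalars. They are nonzero because all blocks are visible.
\end{itemize}

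\emph{Checking equivariance.} We verify $\Phi\circ\bar\rho_t(\sigma)=\wt\beta_t(\sigma)\circ\Phi$ on each generator. By Theorem~A and Proposition~\ref{prop:spanning_intersections}, $\bar\rho_t(\sigma)$ changes only the classes adjacent to the support of $\sigma$, and does so by the explicit tridiagonal coefficients $\Lambda_\sigma\langle\omega_k,\omega_\sigma\rangle$. The Fox-calculus matrices of $\wt\beta_t(\sigma_i)$ and $\wt\beta_t(A_{i,j})$ are likewise identity outside a $3\times3$ window. So the check reduces to comparing finitely many local entries after rescaling by the $c_k$.

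\emph{Main obstacle.} The hardest part is making the normalizations match: sign and orientation conventions (right-handed twists versus the pullback by $(\wt\sigma^{-1})^*$), the $t$ versus $\ov t$ conjugation in the Poincaré duality between cohomology and twisted homology, and the loop generators adjacent to a bridged invisible run. For the last point, the general loop class crosses invisible punctures, whose monodromy $t^{k}=1$ is trivial, so it corresponds to a single arc in the reduced disk. If a direct match fails, we would allow $\Phi$ to be diagonal with unknown scalars $c_k$ and solve the resulting recursion $c_{k+1}/c_k$ from the off-diagonal entries. Its consistency across the different generators is exactly what needs checking.
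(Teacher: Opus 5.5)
Your factorization step is the paper's argument: Theorem~A gives $\Lambda_\sigma=0$ on the normal generators of $\ker\forgmap_t$, and $\ker\rho_t$ is normal. Your overall plan for the isomorphism is also the paper's: compare matrices generator by generator in the basis that includes the bridging classes. The gap is in the generator check.

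\textbf{The gap: non-adjacent loop generators.} You treat the reduced mixed braid group as if it were generated by Artin generators and by loop generators joining blocks that are adjacent in $\mathcal{P}_t$. Your surjectivity sentence lists only these. Accordingly you assert that every generator acts as the identity outside a $3\times3$ window, with tridiagonal coefficients $\Lambda_\sigma\langle\omega_k,\omega_\sigma\rangle$. This is false. Manfredini's generating set contains all $A_{i,j}$ with $i<j$, and the non-adjacent ones cannot be omitted. For example, when $\{1,2,3\}$ is partitioned into singletons, $\bnp=P_3$ has abelianization $\zz^3$. Hence $A_{1,3}=\sigma_2\sigma_1^2\sigma_2^{-1}$ does not lie in the subgroup generated by $A_{1,2}$ and $A_{2,3}$.

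For a loop generator $A_{i,j}$ that is non-adjacent in $\mathcal{P}_t$, three things go wrong for your check:
\begin{enumerate}
\item The class $\omega_\sigma=\omega_{i,j}$ is not one of your basis classes.
\item Up to four basis classes move, namely $r=h_i-1,h_i,h_{j-1},h_{j-1}+1$ in the visible-only indexing. Each moves by a multiple of $\omega_{i,j}$, which has at least two non-zero coordinates.
\item The Burau block \eqref{eq:matrix_beta_sub} has size $h_{j-1}-h_i+2$.
\end{enumerate}
So comparing local entries in a $3\times3$ window does not establish equivariance on a generating set.

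\textbf{What is missing.} You need the expansion $\omega_{i,j}=\sum_{\alpha=h_i}^{h_{j-1}}\omega_\alpha$ from Proposition~\ref{prop:mixed_linear_combination}, together with the four non-zero pairings of Proposition~\ref{prop:general_mixed_intersections}. Combined with Theorem~A, these give $\rho_t(A_{i,j})\omega_r=\omega_r+c_r\,\omega_{i,j}$. The coefficients are $c_r=1-t^{k_j}$, $-t^{k_i}(1-t^{k_j})$, $-(1-t^{k_i})$, $t^{k_j}(1-t^{k_i})$ for $r=h_i-1,h_i,h_{j-1},h_{j-1}+1$ respectively; this is \eqref{eq:geometric_loop_general}. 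You must then check that the corrected Birman matrix gives $\beta_t(A_{i,j})v_r=v_r+c_r\sum_{\ell=h_i}^{h_{j-1}}v_\ell$.

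Alternatively, you could make your twisted-homology picture rigorous and natural in $\bnp$. Equivariance would then follow from naturality with no generator checks. But arcs between punctures are Borel--Moore classes, and you would still have to prove that the Fox-calculus matrices compute the action on them. As written, that part is only heuristic.

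\textbf{Normalizations.} The factors $(1-t^{k_a})$ and $(1-t^{k_a})(1-t^{k_b})$ you intend to put into $c_k$ are already divided out in \eqref{eqn:artin_atom}--\eqref{eqn:mixed_atom}. Your proposed recursion has only constant solutions. For instance, $\rho_t(\sigma_i)\omega_{i-1}=\omega_{i-1}+\omega_i$ against $\beta_t(\sigma_i)v_{i-1}=v_{i-1}+v_i$ forces $c_{i-1}=c_i$. The paper takes $\omega_r\mapsto v_r=u_{r+1}-t^{k_{\pt(r)}}u_r$ with no rescaling. It also uses $t^{k_\infty}\neq1$ on the Burau side, to split $\cc^n=W\oplus\cc v_{\mathrm{inv}}$, so that $\wt\beta_t$ is realized on $W=\ker L$.
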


The isomorphism is established by an explicit change-of-basis
matrix. For a general loop generator $A_{i,j}$, the class
$\omega_{i,j}$ expands as a linear combination of spanning
classes (Proposition~\ref{prop:mixed_linear_combination}).
Theorem~A is proved in Section~\ref{section:action_mbg}, while
Theorem~B is proved in
Section~\ref{section:multivariate_burau_identification}.

Chaudhuri and Mukherjee \cite{ChaudhuriMukherjee2026}
independently study mixed braid group representations
via special Abelian branched covers with deck group
$\zz/d_1\zz \times \cdots \times \zz/d_m\zz$,
generalizing McMullen's cyclic covers (recovered when $m=1$)
but not encompassing arbitrary cyclic covers.
Their construction also decomposes cohomology into
eigenspaces and uses figure-eight and Pochhammer
contours to construct spanning sets. The present
cyclic-cover setting additionally allows
$t$-invisible blocks and the associated
factorization through a smaller mixed braid group.
Our matrices are obtained directly from the action
on lifted contours.

\medskip\noindent\textbf{Organization.}
Section~\ref{section:mixed_braid_group_lifts} defines the
cyclic cover $X$, realizes $\bnp$ as a mapping class subgroup,
and constructs canonical lifts of its generators to $X$.
Section~\ref{section:cohomology_cyclic_cover} builds the
cohomological representation on the $t$-eigenspaces of
$H^1(X)$ and computes their dimensions and Hodge signatures.
Section~\ref{section:generators} studies the local geometry
of support disks and their model cyclic covers.
Section~\ref{section:intersection_pairings} lifts these to
global classes on $X$, computes their Hermitian intersection
matrix, and proves they span $H^1(X)_t$.
Section~\ref{section:action_mbg} computes the monodromy
action of each generator and proves Theorem~A.
Section~\ref{section:multivariate_burau_identification}
handles the factorization for $t$-invisible blocks and
establishes the isomorphism with the multivariate Burau
representation (Theorem~B).

\section{The Mixed Braid Group and Lifting to Cyclic Covers}\label{section:mixed_braid_group_lifts}

We construct a degree-$d$ cyclic branched cover
$X \to \pp^1$ whose branch locus is $B$ and whose
monodromy encodes the partition data
$k_1,\dots,k_m$.  The mixed braid group $\bnp$ is
then realized as a subgroup of the mapping class
group of the plane relative to the finite branch
locus.  We prove that every mixed braid lifts to
$X$, that each lift commutes with the deck
transformation $T$
(Proposition~\ref{prop:lift-commutes-T}), and that
a unique canonical lift is fixed by requiring the
identity action near the fiber at infinity
(Proposition~\ref{prop:unique-lift-near-infty}).

\subsection{The cyclic cover and its
monodromy}\label{subsection:cyclic_cover_description}
Our ultimate goal is to study the representations of the
mixed braid group $\bnp$ that arise from its action on the
cohomology of certain covering spaces.  A cyclic cover
determines a partition of its branch points by grouping
those with equal exponent. Conversely, every partition,
by suitably choosing exponents, yields such a cover.
We therefore work with the cover defined by the
partition $\mathcal{P}$ and the exponents assigned to
its blocks.

We construct a cyclic branched cover
$p \colon X \to \pp^1$ of degree $d$ as the
compact Riemann surface associated to
\begin{equation*}
w^d = f(z),
	\qquad
f(z) = \prod_{i=1}^n (z - b_i)^{k_i},
\end{equation*}
where $0 < k_i < d$ and
$\gcd(d,k_1,\dots,k_n) = 1$.  Set
$k_\infty \equiv -\sum_{i=1}^n k_i \pmod{d}$.
The finite branch locus is
$B' := \{b_1, \dots, b_n\}$; the total branch
locus is $B := B' \cup \{\infty\}$ if
$k_\infty \not\equiv 0 \pmod{d}$, and $B := B'$
otherwise.  The affine curve may be singular
where $z = b_i$ and $w = 0$; $X$ is its
normalization and compactification. Away from
the preimage $\wt B := p^{-1}(B)$, the projection
$p(z,w) = z$ is an unramified covering map. It is
a folklore result that every cyclic cover of $\pp^1$
arises in this manner.

Let $0 < k_1 < k_2 < \cdots < k_m < d$ be the
distinct exponents appearing among the branch points.
Group the branch points by equal exponent. Let
$\mathcal{P} = \{P_1,\dots,P_m\}$ be the partition
of $\{1,\dots,n\}$ where $i,j$ belong to the same
block iff the exponent of $b_i$ equals that of $b_j$.
Let $\pt \colon \{1,\dots,n\} \to \{1,\dots,m\}$ map
each index to its block.

\begin{remark}
From now on we index exponents by the
partition: the exponent of a branch point $b_i$ is
denoted $k_{\pt(i)}$.  The raw list
$k_1,\dots,k_n$ is no longer used.
\end{remark}

The defining equation then takes the form
\begin{equation}\label{eqn:GeneralCyclicCover}
w^d = \prod_{i=1}^n (z - b_i)^{k_{\pt(i)}}.
\end{equation}
Conversely, every partition, by suitably choosing
exponents, yields a cyclic cover of $\pp^1$.

Remove the preimage of the branch locus to obtain the
punctured Riemann surface
$X' := X \setminus \wt B$.  The restriction
$p \colon X' \to \cc \setminus B'$ is an
unbranched, regular $d$-sheeted covering map.  Its
deck transformation group is cyclic of order $d$:
\begin{equation}\label{eqn:Deck_group}
G := \operatorname{Deck}(X/\pp^1)
	\cong \zd,
\end{equation}
generated by $T(z,w) = (z, \zeta w)$ where
$\zeta := e^{2\pi i/d}$.

The unbranched cover $X'$ is uniquely determined by a
monodromy homomorphism $\Phi$ from the fundamental group
$\pi_1(\cc \setminus B', z_0)$ of the base space to the
deck group $G \cong \zd$. Let $\ell_i$ be a standard
simple counterclockwise loop based at $z_0$ that winds
once around the branch point $b_i$. The elements
$[\ell_1], \dots, [\ell_n]$ generate the fundamental
group, and $\Phi$ maps each generator $[\ell_i]$ to the
local monodromy exponent $k_{\pt(i)} \pmod{d}$.
Because the target
group $\zd$ is abelian, $\Phi$ necessarily factors through
the abelianization of the fundamental group, which is the
first homology group $H_1(\cc \setminus B')$. By abuse of
notation, we use $\Phi$ to denote both this fundamental
group homomorphism and the induced map on homology,
$\Phi \colon H_1(\cc \setminus B') \to \zd$. It evaluates
on the basis elements as:
\begin{equation}\label{eqn:monodromy_map}
	\Phi([\ell_i]) = k_{\pt(i)} \pmod{d}.
\end{equation}
This evaluation determines the monodromy homomorphism and
fixes the topological structure of the cover.
Let $\Phi_\zz \colon H_1(\cc\setminus B')\to\zz$ be given by
$\Phi_\zz([\ell_i])=k_{\pt(i)}$. Then $\Phi$ is the reduction
of $\Phi_\zz$ modulo $d$.

\subsection{The Mixed Braid Group}
\label{subsection:mixed_braid_group_definition}

To define the generators geometrically, fix a reference
configuration of branch points
\begin{align*}
b_1 < b_2 < \cdots < b_n
\end{align*}
on the real axis. We treat braids as homeomorphisms of the
punctured disk and define group multiplication by functional
composition: $(fg)(z)=f(g(z))$. Thus, in a product of
braids, the rightmost factor is applied first.

The \textbf{classical braid group} $B_n$ on $n$ strands
admits the standard presentation
\begin{equation}\label{eqn:ClassicalBraidGroup}
B_n = \bigl\langle \sigma_1,\dots,\sigma_{n-1} \;\big|\;
  \sigma_i\sigma_j = \sigma_j\sigma_i \;\; (|i-j|\geq 2),\;\;
  \sigma_i\sigma_{i+1}\sigma_i = \sigma_{i+1}\sigma_i\sigma_{i+1}
  \bigr\rangle.
\end{equation}
Each generator $\sigma_i$ is the right, counterclockwise
half-twist exchanging the $i$-th and $(i+1)$-st strands.
Thus the left point moves to the right through the lower
half-plane, while the right point moves to the left through
the upper half-plane.

As introduced in
Subsection~\ref{subsection:cyclic_cover_description},
the partition $\mathcal{P} = \{P_1,\dots,P_m\}$ groups the
$n$ branch points according to their local monodromy exponents
$k_{\pt(i)}$, where $0 < k_1 < \cdots < k_m < d$ are the
distinct exponent values. After suitably renumbering the
points, we assume the blocks are contiguous:
$P_r = \{h_{r-1}+1,\dots,h_r\}$, where
$1 \leq h_1 < h_2 < \cdots < h_m = n$ and $h_0 = 0$.
The exponent of a branch point $b_i$ is $k_{\pt(i)}$.
Applying an orientation-preserving homeomorphism if
necessary, we assume without loss of generality that
$b_i = i$ for $1 \leq i \leq n$.

Given such a partition $\mathcal{P}$, the \textbf{mixed
braid group} with respect to $\mathcal{P}$, denoted $\bnp$,
is the subgroup of the classical braid group $B_n$
consisting of braids whose induced permutation $\tau \in S_n$
preserves each block of the partition setwise (equivalently,
$\pt(\tau(i)) = \pt(i)$ for all $i$).

For this partition, the mixed braid group $\bnp$ is
generated by (see \cite[Theorem~4]{MR1465028}):
\begin{equation}\label{eqn:GeneratorofMixedBraidGroup}
	\sigma_i \quad \text{for } i \notin \{h_1, \dots, h_{m-1}\}, \quad \text{and} \quad
A_{i,j} \quad \text{for } 1 \leq i < j \leq m,
\end{equation}
where $\sigma_i$ are the standard generators of $B_n$. Set
$\beta_{i,j} = \sigma_{h_{j-1}}\cdots\sigma_{h_i+1}.$
For $j=i+1$, this is the empty product, so
$\beta_{i,i+1}=1$.
Then $A_{i,j} = \beta_{i,j}\sigma_{h_i}^2\beta_{i,j}^{-1}.$
Manfredini also determined a complete set of relations for
these generators. We do not reproduce them here. The
representation $\rho_t$ is defined geometrically by
lifting braids and applying inverse pullback, which
directly yields a homomorphism independently of any
presentation of $\bnp$.

For every braid in the mixed braid group $\bnp$, there
exists a lift to the cyclic cover that commutes with the
deck generator $T$ (Proposition~\ref{prop:lift-commutes-T}).
Because the partition $\mathcal{P}$ determines the cyclic
cover of $\pp^1$ through \eqref{eqn:GeneralCyclicCover},
as detailed in Subsection~\ref{subsection:liftability},
$\bnp$ serves as a natural group associated to this cover
and its chosen deck transformation.

Every braid in $B_n$ determines an element of
$\operatorname{Mod}_c(\cc,B')$; see
\cite[Section~9.1.3]{FarbMargalit}. Hence $\bnp$ is a
mapping class subgroup. An \textbf{Artin generator}
$\sigma_i$ swaps the adjacent branch points $b_i$ and
$b_{i+1}$ within one block by a right, counterclockwise
half-twist. On homology it transposes $[\ell_i]$ and
$[\ell_{i+1}]$.

Let $D_{h_i}$ be the standard disk supporting
$\sigma_{h_i}^2$. The \textbf{loop generator} $A_{i,j}$
is the positive, right full twist supported in
$D_{i,j}:=\beta_{i,j}(D_{h_i}).$
The disk $D_{i,j}$ passes below every intermediate
branch point. It contains exactly the branch points
$b_{h_i}$ and $b_{h_{j-1}+1}$.
In general, a mixed braid acts on
$H_1(\cc\setminus B')$ by permuting the classes
$[\ell_1],\dots,[\ell_n]$ according to its induced
permutation.

For any block $P_i$, removing its $n_i := |P_i|$ strands
yields a well-defined mixed braid on $n - n_i$ strands.
The forgetful homomorphism
\begin{align*}
	\forgmap_i \colon \bnp
	\longrightarrow
B_{n-n_i, \,\mathcal{P}\setminus\{P_i\}}
\end{align*}
erases the strands of $P_i$.
Its kernel is the normal subgroup generated by
the internal Artin generators $\sigma_k$
($h_{i-1} < k < h_i$) and all loop generators
involving $P_i$ (namely $A_{i,j}$ for $j > i$ and
$A_{\ell,i}$ for $\ell < i$); these generators
represent the motions that become trivial upon
removing the strands of $P_i$.
For a root of unity $t \in \mu'(d)$, let
$\mathcal{I}_t = \{i : t^{k_i} = 1\}$ be the
set of block indices with $t^{k_i}=1$, and let
$r_t = \sum_{i \in \mathcal{I}_t} |P_i|$ be
the total number of branch points in
those blocks.  Define the
\textbf{reduced partition}
\begin{align*}
	\mathcal{P}_t
	= \mathcal{P} \setminus \{P_i : t^{k_i} = 1\},
\end{align*}
set $n_t = n - r_t$, and let
$m_t = |\mathcal{P}_t| = m - |\mathcal{I}_t|$.
Then $\mathcal{P}_t$ is a partition of
$n_t$ strands into $m_t$ blocks, each
satisfying $t^{k_j} \neq 1$.  When
$t^{k_i} \neq 1$ for all $i$, one has
$\mathcal{P}_t = \mathcal{P}$, $n_t = n$, and
$B_{n_t,\mathcal{P}_t} = \bnp$.
The forgetful homomorphism
\begin{equation}\label{eqn:forgmap_t}
	\forgmap_t \colon \bnp
	\longrightarrow
B_{n_t, \mathcal{P}_t}
\end{equation}
drops all blocks $P_i$ with $t^{k_i}=1$.
It is obtained by composing the relevant
maps $\forgmap_i$, and will be employed in
Section~\ref{section:action_mbg}
(see Remark~\ref{rem:factorization_invisible}).
Its kernel is the normal subgroup generated
by the internal Artin generators of each
$P_i$ with $t^{k_i}=1$ and all loop
generators having such a block as an
endpoint.

\subsection{Liftability and Canonical Lifts}
\label{subsection:liftability}
A homeomorphism of $\cc \setminus B'$ lifts to the
unbranched cover $X'$ if and only if it preserves the
monodromy homomorphism $\Phi$ up to a unit in $\zd$.
By \cite[Lemma~2.1]{Ghaswala}, a braid $\sigma \in B_n$
is liftable if and only if there exists
$u \in \zd^*$ with
\begin{align*}
k_{\pt(\tau(i))} = u \, k_{\pt(i)} \quad \text{for all } 1 \leq i \leq n,
\end{align*}
where $\tau \in S_n$ is the permutation induced by $\sigma$.

For any mixed braid $\sigma \in \bnp$, its induced
permutation satisfies $\pt(\tau(i)) = \pt(i)$ for all $i$.
Hence $k_{\pt(\tau(i))} = k_{\pt(i)}$. Setting $u=1$
in the lifting criterion guarantees that every element of
the mixed braid group lifts to the cyclic cover $X$.
Since $\sigma_*[\ell_i]=[\ell_{\tau(i)}]$, we get
$\Phi_\zz(\sigma_*[\ell_i])=k_{\pt(\tau(i))}=k_{\pt(i)}$, and
hence $\Phi_\zz\circ\sigma_*=\Phi_\zz$.

We will now prove that for any mixed braid, its lifts
commute with the deck transformation $T$.

\begin{proposition}\label{prop:lift-commutes-T}
Let $\sigma \in \bnp$ be a mixed braid. Then any lift
$\wt\sigma$ of $\sigma$ to the cover $X$ commutes
with the deck transformation $T$.
\end{proposition}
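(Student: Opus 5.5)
The plan is to work first on the unbranched part $X' \to \cc\setminus B'$ and then extend over the branch points by continuity. Fix a basepoint $z_0$ and a point $\tilde z_0\in p^{-1}(z_0)$. The composite $\wt\sigma T\wt\sigma^{-1}$ is a lift of $\sigma\,\mathrm{id}\,\sigma^{-1}=\mathrm{id}$, so it is a deck transformation. Since the deck group $G\cong\zd$ is cyclic and generated by $T$, we get $\wt\sigma T\wt\sigma^{-1}=T^{u}$ for some unit $u\in\zd^*$. The task is therefore to show $u=1$. We may assume $\sigma$ fixes $z_0$, for instance by taking $z_0$ near $\infty$, outside the support of a compactly supported representative in $\operatorname{Mod}_c(\cc,B')$.

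To identify $u$, use path lifting. Recall that $T$ is the deck transformation sending the endpoint of the lift at $\tilde x$ of a loop $\gamma$ with $\Phi(\gamma)=1$ to $\tilde x$ itself, i.e.\ $T^{\Phi(\gamma)}(\tilde x)$ is the endpoint of the lift of $\gamma$ starting at $\tilde x$. This is the standard description of the monodromy $\Phi$ from \eqref{eqn:monodromy_map}, with the normalization $T(z,w)=(z,\zeta w)$.

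Now take a loop $\gamma$ at $z_0$ and lift it from $\tilde x=\wt\sigma^{-1}(\tilde z_0)$. Applying $\wt\sigma$ gives the lift of $\sigma\circ\gamma$ starting at $\tilde z_0$. Hence
\begin{align*}
\wt\sigma\bigl(T^{\Phi(\gamma)}\tilde x\bigr)=T^{\Phi(\sigma_*\gamma)}\,\wt\sigma(\tilde x).
\end{align*}
By the computation preceding the proposition, $\Phi_\zz\circ\sigma_*=\Phi_\zz$, so $\Phi(\sigma_*\gamma)=\Phi(\gamma)$. Since $\gcd(d,k_1,\dots,k_m)=1$, some loop $\gamma$ (a $\zz$-combination of the $\ell_i$) has $\Phi(\gamma)=1$. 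For this $\gamma$ we obtain $\wt\sigma T=T\wt\sigma$ at the point $\tilde x$.

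Two deck transformations, here $T$ and $\wt\sigma T\wt\sigma^{-1}$, that agree at one point of the connected cover $X'$ coincide. So $\wt\sigma T=T\wt\sigma$ on all of $X'$. Both sides are continuous on $X$ and $X'$ is dense, so the identity extends to all of $X$.

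The main obstacle is the basepoint bookkeeping, because $\sigma$ need not fix $z_0$. If we choose $z_0$ outside the support, as above, the argument goes through directly. Otherwise we connect $z_0$ to $\sigma(z_0)$ by a path $\delta$, replace $\sigma_*$ by the change-of-basepoint conjugate $\delta^{-1}(\sigma\circ\gamma)\delta$, and use that $\Phi$ factors through homology, which is unaffected by conjugation. Either route relies only on $\Phi_\zz\circ\sigma_*=\Phi_\zz$, that is, the lifting criterion with $u=1$.
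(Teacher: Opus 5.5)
Your proposal is correct and follows essentially the same route as the paper. Both arguments observe that $\wt\sigma T\wt\sigma^{-1}$ is a deck transformation, then lift a loop from $\wt\sigma^{-1}(\wt z_0)$ and use $\Phi\circ\sigma_*=\Phi$ to show that this conjugate agrees with the corresponding power of $T$ at a point. The differences are cosmetic. You use a single B\'ezout loop $\gamma$ with $\Phi(\gamma)=1$, whereas the paper shows that $\wt\sigma$ commutes with each $T^{k_{\pt(i)}}$ and then uses that these elements generate $\langle T\rangle$. You also place $z_0$ outside the support, which avoids the paper's change-of-basepoint path $\delta$; your last paragraph correctly handles the general case along the paper's lines.
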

\begin{proof}
Let $\tau \in S_n$ be the permutation induced by
$\sigma$.  Since $\sigma$ preserves the partition
blocks, $\pt(\tau(i)) = \pt(i)$ and therefore
$k_{\pt(\tau(i))} = k_{\pt(i)}$.  Put $j := \tau(i)$.

The conjugate $\wt\sigma \circ T \circ \wt\sigma^{-1}$
is a lift of the identity on $\cc \setminus B'$, hence
lies in $\langle T \rangle$.  There exists an integer
$\alpha$ with
	\begin{align*}
		\wt\sigma \circ T^{k_{\pt(i)}}
		\circ \wt\sigma^{-1} = T^{\alpha}.
	\end{align*}
We determine $\alpha$ by evaluating at a point.

Fix $\wt z_0 \in p^{-1}(z_0)$ and set
$\wt z_1 := \wt\sigma^{-1}(\wt z_0)$, with images
$z_0$ and $z_1 = \sigma^{-1}(z_0)$ under $p$.
Choose a path $\wt\delta$ in $X'$ from $\wt z_0$
to $\wt z_1$ and put $\delta := p(\wt\delta)$.
Define the loop
	\begin{align*}
		\ell'_i := \overline{\delta} \cdot \ell_i
		\cdot \delta
	\end{align*}
based at $z_1$.  The loop $\ell'_i$ is freely
homotopic to $\ell_i$, so
$\Phi([\ell'_i]) = k_{\pt(i)}$.  Let $\wt\ell'_i$ be the
lift of $\ell'_i$ starting at $\wt z_1$.  By
definition of the monodromy representation,
$\wt\ell'_i(1) = T^{k_{\pt(i)}}(\wt z_1)$.

Apply $\wt\sigma$ to $\wt\ell'_i$.  The resulting
path $\wt\sigma \circ \wt\ell'_i$ lifts
$\sigma \circ \ell'_i$ and starts at
$\wt\sigma(\wt z_1) = \wt z_0$.  The base loop
$\sigma \circ \ell'_i$ is based at
$\sigma(z_1) = z_0$ and winds around
$\sigma(b_i) = b_j$; it is homologous to $\ell_j$.
Therefore $\Phi([\sigma \circ \ell'_i]) = k_{\pt(j)}$, and
$(\wt\sigma \circ \wt\ell'_i)(1)
	= T^{k_{\pt(j)}}(\wt z_0)$.

Combine the above equalities:
	\begin{align*}
		\bigl(\wt\sigma \circ T^{k_{\pt(i)}}
			\circ \wt\sigma^{-1}\bigr)(\wt z_0)
		&= \wt\sigma\bigl(T^{k_{\pt(i)}}(\wt z_1)\bigr) \\
		&= \wt\sigma\bigl(\wt\ell'_i(1)\bigr) \\
		&= (\wt\sigma \circ \wt\ell'_i)(1) \\
		&= T^{k_{\pt(j)}}(\wt z_0).
	\end{align*}
Hence $\alpha = k_{\pt(j)}$.  Because $k_{\pt(i)} = k_{\pt(j)}$,
$\wt\sigma \circ T^{k_{\pt(i)}}
	= T^{k_{\pt(i)}} \circ \wt\sigma$ for all $i$.

The hypothesis $\gcd(d, k_1, \dots, k_m) = 1$ implies
that $\{T^{k_{\pt(i)}}\}_{i=1}^n$ generates $\langle T
	\rangle \cong \zd$.  Since $\wt\sigma$ commutes with
each generator, it commutes with $T$.
\end{proof}

A liftable braid generally admits $d$ distinct lifts,
differing by deck transformations. To obtain a
well-defined representation, we must uniquely specify a
``canonical'' lift. Since every element of $\bnp$ is
compactly supported in $\cc$, we may fix a disk
neighborhood $\dd_\infty$ of $\infty$ in $\pp^1$ on which
all braid group elements act as the identity. 
% This disk
% will be called the \emph{disk at infinity}.

\begin{proposition}\label{prop:unique-lift-near-infty}
Let $\sigma \in \bnp$ be a mixed braid, viewed as an
element of $\operatorname{Mod}_c(\cc, B')$. Then
$\sigma$ has a unique canonical lift $\wt\sigma : X
	\to X$ that acts as the identity on a neighborhood of
$\wt\infty := p^{-1}(\infty)$.
\end{proposition}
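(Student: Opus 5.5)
Choose a representative homeomorphism $h\colon \pp^1\to\pp^1$ of $\sigma$ that fixes $B'$ setwise, is the identity on $\dd_\infty$, and satisfies $h(B')=B'$. Restricted to $\cc\setminus B'$, it lifts to $X'$, since the lifting criterion with $u=1$ holds for every $\sigma\in\bnp$ (Subsection~\ref{subsection:liftability}). The set of lifts is a torsor under the deck group $G=\langle T\rangle\cong\zd$. By Proposition~\ref{prop:lift-commutes-T}, each lift commutes with $T$. The plan has three steps: select the lift that fixes a point over a base point in $\dd_\infty$; show that this lift is the identity on all of $p^{-1}(\dd_\infty)$; then extend it over the branch fibers to a homeomorphism of $X$.

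\emph{Existence.} Take a base point $z_\ast\in\dd_\infty\setminus\{\infty\}$ and a point $\wt z_\ast\in p^{-1}(z_\ast)$. Since $h(z_\ast)=z_\ast$, any lift $\wt h$ sends $\wt z_\ast$ into the fiber $p^{-1}(z_\ast)$. The deck group acts simply transitively on this fiber, so exactly one lift $\wt h_0=T^{a}\wt h$ satisfies $\wt h_0(\wt z_\ast)=\wt z_\ast$. The set $p^{-1}(\dd_\infty\setminus\{\infty\})$ is the preimage of a punctured disk. Its components are punctured disks, each mapping to $\dd_\infty\setminus\{\infty\}$ by a covering of degree $d/\gcd(d,k_\infty)$.

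On the component $U$ containing $\wt z_\ast$, both $\wt h_0$ and $\id$ are lifts of $h|_{\dd_\infty}=\id$ that agree at $\wt z_\ast$. By uniqueness of lifts on the connected set $U$, we get $\wt h_0|_U=\id$. Any other component has the form $T^{b}(U)$. Because $\wt h_0$ commutes with $T$, we have $\wt h_0\circ T^{b}=T^{b}\circ\wt h_0=T^b$ on $U$, so $\wt h_0$ is the identity on $T^b(U)$ as well. Hence $\wt h_0$ is the identity on all of $p^{-1}(\dd_\infty\setminus\{\infty\})$, and it extends by the identity across $\wt\infty$.

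Near each $b_i$, the map $\wt h_0$ is a homeomorphism between punctured neighborhoods of ramification points lying over $b_i$ and $h(b_i)=b_{\tau(i)}$. In local coordinates these are finite cyclic coverings of punctured disks, and $X$ is the normalization and compactification. So $\wt h_0$ extends continuously, and hence as a homeomorphism, over $p^{-1}(B')$ by filling in the punctures. This gives $\wt\sigma:=\wt h_0\colon X\to X$.

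\emph{Uniqueness.} Any two lifts of $h$ differ by some $T^c$. If both are the identity near $\wt\infty$, then $T^c$ fixes a point of $p^{-1}(z_\ast)$. Since $T$ acts freely on $X'$, this forces $c\equiv 0$.

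The step I expect to be the main obstacle is well-definedness at the level of mapping classes: the construction must not depend on the choice of representative $h$ up to compactly supported isotopy. Given an isotopy $h_s$ rel $B'\cup\dd_\infty$, lift it continuously by path lifting, normalizing each $\wt h_s$ by fixing $\wt z_\ast$. The normalized lifts form an isotopy of $X$ that stays the identity near $\wt\infty$. Thus the canonical lift is determined by $\sigma$ up to isotopy, which is all that is needed for the induced action on $H^1(X)$.
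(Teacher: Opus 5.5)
Your proof is correct and follows essentially the same route as the paper: you normalize the lift by fixing a point over $\dd_\infty$, use uniqueness of lifts on one component of $p^{-1}(\dd_\infty)$, propagate to the other components via the $T$-commutation of Proposition~\ref{prop:lift-commutes-T}, and get uniqueness from the free action of the deck group on $X'$. Three refinements go beyond what the paper makes explicit: you work on the punctured preimage, where $p$ is an honest covering, instead of appealing to simple connectivity of the disk components; you extend the lift over $p^{-1}(B')$; and you check independence of the choice of representative up to isotopy.
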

\begin{proof}
The preimage $\wt\dd_\infty := p^{-1}(\dd_\infty)$
has $e_\infty := \gcd(d, k_\infty)$ connected
components, each a disk.  The deck group acts
transitively with stabilizer $\langle
T^{e_\infty} \rangle$, so the components are
indexed by $G / \langle T^{e_\infty} \rangle$.
Write them as $\wt\dd_\infty^{\,g}$ for
$g \in \{0, \dots, e_\infty-1\}$, with
$T\bigl(\wt\dd_\infty^{\,g}\bigr)
	= \wt\dd_\infty^{\,g+1}$.

Pick a lift of $\sigma$ and postcompose with a
deck transformation to obtain a lift $\wt\sigma$
that fixes a point
$x \in \wt\dd_\infty^{\,0}$.  Since
$\sigma|_{\dd_\infty} = \id$, the restriction
$\wt\sigma|_{\wt\dd_\infty^{\,0}}$ is a lift of
the identity fixing $x$ on a simply connected
domain, hence
$\wt\sigma|_{\wt\dd_\infty^{\,0}} = \id$.

By Proposition~\ref{prop:lift-commutes-T},
$\wt\sigma$ commutes with $T$.  For any $g$,
$\wt\dd_\infty^{\,g}
	= T^{\,g}(\wt\dd_\infty^{\,0})$.  Then
	\begin{align*}
		\wt\sigma|_{\wt\dd_\infty^{\,g}}
		= \wt\sigma
			\circ T^{\,g}|_{\wt\dd_\infty^{\,0}}
		= T^{\,g}
			\circ \wt\sigma|_{\wt\dd_\infty^{\,0}}
		= \id.
	\end{align*}
Thus $\wt\sigma|_{\wt\dd_\infty} = \id$.  Two
lifts differ by a deck transformation. A nontrivial
one cannot fix $\wt\dd_\infty$ pointwise, so
$\wt\sigma$ is unique.
\end{proof}

\begin{remark}\label{rem:canonical_lift}
For the remainder of this paper, any reference to the
lift $\wt\sigma$ of a mixed braid $\sigma \in \bnp$
implicitly assumes this unique canonical lift fixed
near the fiber at infinity.
\end{remark}

\section{The Cohomological Representation}\label{section:cohomology_cyclic_cover}

We equip $H^1(X)$ with the Hermitian intersection form
and use Poincar\'e duality to reduce pairing
computations to curve intersections.  The canonical
lifts of mixed braids produce unitary representations
\begin{align*}
	\rho_t \colon \bnp
	\longrightarrow
	\Ugrp\!\bigl(H^1(X)_t,
		\langle\cdot,\cdot\rangle\bigr)
\end{align*}
for $t \in \mu'(d)$.  The construction is independent
of $d$ for fixed exponents $k_i$
(Section~\ref{subsection:representation_on_cohomology}).
The Chevalley--Weil theorem determines the dimension
and Hodge signature of each $t$-eigenspace. The
explicit formulas are given in
Proposition~\ref{prop:dim_cyclic_cover} below.

\subsection{Intersection Pairings and Poincar\'e Duality}

Let $X$ be the compact Riemann surface defined above.
Write $H^1(X)$ for $H^1(X; \cc)$.  Endow $H^1(X)$
with the Hermitian intersection form
\begin{align*}
	\langle \omega, \omega' \rangle
	= \frac{\sqrt{-1}}{2}
	\int_X \omega \wedge \overline{\omega'},
\end{align*}
satisfying
$\langle \omega', \omega \rangle
= \overline{\langle \omega, \omega' \rangle}$.

Global Poincar\'e duality provides an isomorphism
\begin{equation}\label{eqn:global_poincare_isomorphism}
	\eta \colon H_1(X)
	\;\xrightarrow{\;\sim\;}\; H^1(X).
\end{equation}
For every orientation-preserving diffeomorphism
$f\colon X\to X$, Poincar\'e duality satisfies
\begin{equation}\label{eqn:poincare_naturality}
	(f^{-1})^*\eta(\gamma)=\eta(f_*\gamma).
\end{equation}
In particular,
\begin{align*}
	\rho_t(\sigma)\eta(\gamma)
	=\eta(\wt\sigma_*\gamma).
\end{align*}
Thus the action on a cohomology class may be computed by
pushing forward its Poincar\'e-dual cycle.

For an open subsurface $Y \subset X$, relative
Poincar\'e duality gives
$H_1(Y) \cong H^1_c(Y)$.  Extension by zero defines
a map
$\iota_! \colon H^1_c(Y) \longrightarrow H^1(X)$.

For curves $\gamma, \gamma'$ in $X$, the Hermitian
pairing of their Poincar\'e duals satisfies
\begin{equation}\label{eqn:poincare_dual_intersection_form}
	\langle \eta(\gamma), \eta(\gamma') \rangle
	= \frac{\sqrt{-1}}{2}\,
	\langle \gamma, \gamma' \rangle,
\end{equation}
where $\langle \gamma, \gamma' \rangle$ is the
algebraic intersection number.

The deck transformation $T$ acts on homology via
$T_*$ and on cohomology via $(T^{-1})^*$.  The
$t$-eigenspaces are
$H_1(X)_t = \ker(T_* - tI)$ and
$H^1(X)_t = \ker((T^{-1})^* - tI)$.  If
$T_* \gamma = t \gamma$, then
\begin{align*}
	(T^{-1})^*\eta(\gamma)
	= \eta(T_* \gamma)
	= t\,\eta(\gamma).
\end{align*}
Thus $\eta$ restricts to an isomorphism
\begin{align*}
	\eta \colon H_1(X)_t
	\;\xrightarrow{\;\sim\;}\; H^1(X)_t.
\end{align*}

For $\omega \in H^1(X)_t$ and $\omega' \in H^1(X)_s$,
$(T^{-1})^*(\omega \wedge \overline{\omega'})
= t \overline{s}(\omega \wedge \overline{\omega'})$.
Since $T^{-1}$ preserves the integral,
$\langle \omega, \omega' \rangle
= t \overline{s} \langle \omega, \omega' \rangle$,
which vanishes when $t \neq s$.  
Distinct eigenspaces are hence orthogonal.  
Together with
non-degeneracy of the global form, this implies the
restriction to each $H^1(X)_t$ is non-degenerate.

\subsection{The Representation on Cohomology}\label{subsection:representation_on_cohomology}

By Remark~\ref{rem:canonical_lift}, each $\sigma\in\bnp$
has a canonical lift $\wt\sigma\colon X\to X$. The
assignment $\sigma\mapsto\wt\sigma$ defines a homomorphism
from $\bnp$ to $\operatorname{Mod}(X)$. Its image commutes
with the deck transformation $T$.

The group $\operatorname{Mod}(X)$ acts on $H^1(X)$
by $f \mapsto (f^{-1})^*$. Restricting this action to the
image of $\bnp$ gives a linear representation on
$H^1(X)$. By
Proposition~\ref{prop:lift-commutes-T}, the induced
pullback preserves each eigenspace $H^1(X)_t$, yielding a
family of representations
\begin{align*}
	\rho_t\colon\bnp
	\longrightarrow
	\GL(H^1(X)_t),
	\quad
	\rho_t(\sigma) = (\wt\sigma^{-1})^*.
\end{align*}
Since $(\wt\sigma^{-1})^*$ is an isometry of
$\lab\cdot,\cdot\rab$, the image lies in the unitary
group:
\begin{align*}
	\rho_t\colon\bnp\longrightarrow
	\Ugrp
	\bigl(H^1(X)_t,\lab\cdot,\cdot\rab\bigr).
\end{align*}

\begin{remark}
$H^1(X)_1 \cong p^*H^1(\pp^1) = 0$, so
$\rho_1$ is trivial.  We restrict to
$t \in \mu'(d) := \mu(d) \setminus \{1\}$.
\end{remark}

The construction of the cover $X$ depends a priori on
the integer degree $d$ used in the construction.

Let $X_d:=X$. Suppose $d\mid d'$, and let $X_{d'}$
be the cyclic branched cover of degree $d'$ defined
by
$w^{d'} = \prod_{i=1}^n (z - b_i)^{k_{\pt(i)}}$
with the same exponents $k_{\pt(i)}$ but degree $d'$.  Then
$X_{d'}$ factors through $X_d$ via
\begin{align*}
	\pi \colon X_{d'} \to X_d,
	\qquad
	\pi(z,w) = (z, w^{\,d'/d}).
\end{align*}
Let $T'$ generate the deck group of $X_{d'}$.  Then
$\pi \circ T' = T \circ \pi$, so the pullback
$\pi^*$ carries $H^1(X_d)_t$ into $H^1(X_{d'})_t$.
Since the canonical lifts respect $\pi$,
\begin{align*}
	\pi^* \colon H^1(X_d)_t
	\;\xrightarrow{\;\sim\;}\; H^1(X_{d'})_t
\end{align*}
is a $\bnp$-equivariant isomorphism.  Hence
$\rho_t$ is independent of the degree $d$ for fixed
exponents $k_i$.

\begin{remark}
The map $\pi^*$ is not an isometry. Pulling back
and integrating over the degree-$(d'/d)$ cover
scales the intersection form:
	\begin{align*}
		\lab\pi^*(\omega),\pi^*(\omega')\rab_{X_{d'}}
		= \frac{d'}{d}\lab\omega,\omega'\rab_{X_d}.
	\end{align*}
To obtain a unitary isomorphism one must rescale
$\pi^*$ by the factor $\sqrt{d/d'}$.
\end{remark}

\subsection{Dimension of the Eigenspaces via
Chevalley--Weil}
\label{subsection:dimension_eigenspaces}

Let $p \colon X \to Y$ be a finite Galois
cover of compact Riemann surfaces with deck
group $G$.  For a branch point
$b \in B$, pick $\tilde{b} \in p^{-1}(b)$
and let $\gamma_b \in G$ generate the
stabilizer of $\tilde{b}$; its order is
the ramification index
$\nu_b := \operatorname{ord}(\gamma_b)$.  For a
character $\chi$ of $G$, define
\begin{align*}
N_b(\chi) :=
		\sum_{j=1}^{\nu_b-1}
		\Bigl(1 - \frac{j}{\nu_b}\Bigr)
		\dim\ker\!\bigl(
			\chi(\gamma_b)
			- \zeta_{\nu_b}^{\,j} I
		\bigr),
\end{align*}
where $\zeta_m := e^{2\pi i / m}$.  The
Chevalley--Weil theorem
\cite{ChevalleyWeil} (see also
\cite{Candelori} for a modern exposition)
gives the multiplicity of $\chi$ in
$H^{1,0}(X)$ as
\begin{align*}
N = \epsilon + (g_Y - 1)\dim\chi
		+ \sum_{b \in B} N_b(\chi),
\end{align*}
with $\epsilon = 1$ for the trivial
character and $\epsilon = 0$ otherwise.

Now specialize to the cyclic cover
$p \colon X \to \pp^1$ of degree $d$.
Here $G = \langle T \rangle \cong \zd$,
$g_Y = 0$, and the irreducible characters
are
$\chi_\alpha(T) = \zeta^\alpha = t$
($0 \le \alpha \le d-1$).  Take
$\alpha \neq 0$, so $\dim\chi = 1$
and $\epsilon = 0$.  At $b \in B$, the
preimage consists of
$e_b := \gcd(d, k_b)$ points, each
ramified with index
$\nu_b = d / e_b$.  The stabilizer is
generated by
$\gamma_b = T^{k_b}$ (of order
$\nu_b$). Since $\chi_\alpha$ is
$1$-dimensional,
$\chi_\alpha(\gamma_b)
= \zeta^{\alpha k_b}$ is a scalar.

Suppose first that $\alpha k_b/d \notin \zz$. There is a
unique $j \in \{1,\dots,\nu_b-1\}$ satisfying
\begin{align*}
	\zeta_{\nu_b}^{\,j}=\zeta^{\alpha k_b}.
\end{align*}
It is determined by
\begin{align*}
	\frac{j}{\nu_b}
	= \left\{\frac{\alpha k_b}{d}\right\}.
\end{align*}
Consequently,
\begin{align*}
N_b(\chi_\alpha)
	= 1-\left\{\frac{\alpha k_b}{d}\right\}
	= \left\{\frac{-\alpha k_b}{d}\right\}.
\end{align*}
If $\alpha k_b/d \in \zz$, no such $j$ occurs in
$\{1,\dots,\nu_b-1\}$. In this case
\begin{align*}
N_b(\chi_\alpha)=0
	=\left\{\frac{-\alpha k_b}{d}\right\}.
\end{align*}
Thus the multiplicity of the $T^*$-character
$\chi_\alpha$ in $H^{1,0}(X)$ is
\begin{align*}
	-1+\sum_{b\in B}
	\left\{\frac{-\alpha k_b}{d}\right\}.
\end{align*}

Our eigenspaces use $(T^{-1})^*$. Hence the
$t$-eigenspace of $(T^{-1})^*$ is the
$t^{-1}$-eigenspace of $T^*$. Replacing $\alpha$ by
$-\alpha$ gives
\begin{align*}
p_t:=\dim H^{1,0}(X)_t
	= -1+\sum_{i=1}^{n}
	\left\{\frac{\alpha k_{\pt(i)}}{d}\right\}
	+\left\{\frac{\alpha k_\infty}{d}\right\}.
\end{align*}
By Hodge symmetry,
$q_t:=\dim H^{0,1}(X)_t=p_{t^{-1}}$. Therefore
\begin{align*}
q_t
	= -1+\sum_{i=1}^{n}
	\left\{\frac{-\alpha k_{\pt(i)}}{d}\right\}
	+\left\{\frac{-\alpha k_\infty}{d}\right\}.
\end{align*}

Using $\{x\} + \{-x\} = 1$ for
$x \notin \zz$ and
$\{x\} + \{-x\} = 0$ for
$x \in \zz$, we obtain
\begin{align*}
p_t + q_t = n - 1 - r_t
	- \delta_\infty,
\end{align*}
where $\delta_\infty = 1$ if
$t^{k_\infty} = 1$ and $0$ otherwise,
and
\begin{align*}
	\mathcal{I}_t := \{\, i \in \{1,\dots,m\}
		\mid t^{k_i} = 1 \,\},
	\qquad
r_t := \sum_{i \in \mathcal{I}_t} |P_i|,
\end{align*}
where $\mathcal{I}_t$ is the set of
$t$-invisible block indices and $r_t$ is
the total number of branch points in
$t$-invisible blocks.

\begin{proposition}\label{prop:dim_cyclic_cover}
For $t \in \mu'(d)$,
	\begin{align*}
		\dim H^1(X)_t =
		\begin{cases}
n - 2 - r_t, & t^{k_\infty} = 1,\\
n - 1 - r_t, & t^{k_\infty} \neq 1.
		\end{cases}
	\end{align*}
The Hermitian form on $H^1(X)_t$ has
signature $(p_t, q_t)$.
\end{proposition}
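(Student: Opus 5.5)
The plan is to assemble the proposition from the Chevalley--Weil count already carried out above, together with the Hodge decomposition and the standard sign behaviour of the form on its pieces. First, since $T$ acts holomorphically, $(T^{-1})^*$ preserves $H^{1,0}(X)$ and $H^{0,1}(X)$. Hence the $t$-eigenspace splits as
\begin{align*}
H^1(X)_t = H^{1,0}(X)_t \oplus H^{0,1}(X)_t,
\end{align*}
and $\dim H^1(X)_t = p_t + q_t$. The formulas for $p_t$ and $q_t$ come directly from the Chevalley--Weil theorem applied to the character $\chi_{-\alpha}$. We use Hodge symmetry, i.e.\ complex conjugation, which maps $H^{1,0}(X)_{t^{-1}}$ antilinearly onto $H^{0,1}(X)_t$ because $t^{-1} = \overline{t}$.

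Second, we sum the two fractional-part formulas term by term. Each branch point contributes $\{x\}+\{-x\}$ with $x = \alpha k_b/d$. This equals $1$ unless $t^{k_b}=1$, in which case it is $0$.

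The finite points contribute $n - r_t$, since $t^{k_{\pt(i)}}=1$ exactly for $i$ in a $t$-invisible block. The point $\infty$ contributes $1-\delta_\infty$. If $k_\infty\equiv 0$, then $\infty\notin B$, but the term $\{\alpha k_\infty/d\}$ vanishes anyway, so including it is harmless.

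Together with the $-1-1$ from the two constants, this gives $p_t+q_t = n-1-r_t-\delta_\infty$, which yields the two cases of the statement.

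Third, for the signature we check that the form $\frac{\sqrt{-1}}{2}\int\omega\wedge\overline{\omega'}$ is definite of opposite signs on the two Hodge pieces. For a holomorphic $1$-form written locally as $f\,dz$, we have $\frac{\sqrt{-1}}{2}\,dz\wedge d\overline{z} = dx\wedge dy$. So $\langle\omega,\omega\rangle = \int |f|^2\,dx\,dy > 0$ for $\omega\neq 0$, and the analogous computation for $\overline{f}\,d\overline{z}$ gives a negative value.

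The two pieces are also orthogonal. For $\omega$ of type $(1,0)$ and $\omega'$ of type $(0,1)$, the form $\omega\wedge\overline{\omega'}$ is of type $(2,0)$ and therefore vanishes. Since the form restricted to $H^1(X)_t$ is nondegenerate, as shown earlier, the signature is $(p_t,q_t)$.

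The main obstacle is bookkeeping rather than conceptual. We must make sure the sign and eigenvalue conventions match: the $t$-eigenspace of $(T^{-1})^*$ is the $t^{-1}$-character of $T^*$, which is the source of the $\alpha\mapsto-\alpha$ swap. We must also handle the degenerate cases $\alpha k_b/d\in\zz$ and $k_\infty\equiv 0$ uniformly, and confirm that the ramification data $e_b=\gcd(d,k_b)$ enter only through $\{\pm\alpha k_b/d\}$. I would double-check the Chevalley--Weil normalization on a small example, such as the hyperelliptic case $d=2$ with all $k_i=1$, where $\dim H^1(X)_{-1}=2g$ must come out.
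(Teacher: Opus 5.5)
Your proposal is correct and follows the paper's route: the dimension comes from adding the Chevalley--Weil counts $p_t$ and $q_t=p_{t^{-1}}$ and using $\{x\}+\{-x\}\in\{0,1\}$, exactly as in the computation preceding the proposition. Your check that $\frac{\sqrt{-1}}{2}\int\omega\wedge\overline{\omega'}$ is positive definite on $H^{1,0}(X)_t$, negative definite on $H^{0,1}(X)_t$, and that these two pieces are orthogonal, supplies the argument for the signature claim, which the paper states without proof.
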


\section{Local Cohomology of Support Disks}\label{section:generators}

To construct cohomology classes on $X$ we
first study the local geometry.  Each
generator of $\bnp$ has support in a disk
containing two branch points; we examine
the compactly supported cohomology of the
preimage of such a disk under the cyclic
cover.  Reducing to a model cover
over $\pp^1$,
Proposition~\ref{Prop:Dim_Disk_2_branch_points}
gives the dimension of the $t$-eigenspace.
When it is $1$-dimensional, a generator is
obtained by pushing forward a class from
one component of the preimage
(Remark~\ref{rem:generator_notation}).
These local classes are the input to the
global construction in
Section~\ref{section:intersection_pairings}.

\subsection{Support Disks and Their Preimages}
\label{subsection:local_geometry_supports}

Each generator of $\bnp$ is supported in a disk in $\cc$
containing exactly two branch points. These support disks
are classified by generator type:
\begin{figure}[ht]
	\centering
	\includegraphics[width=\textwidth]
		{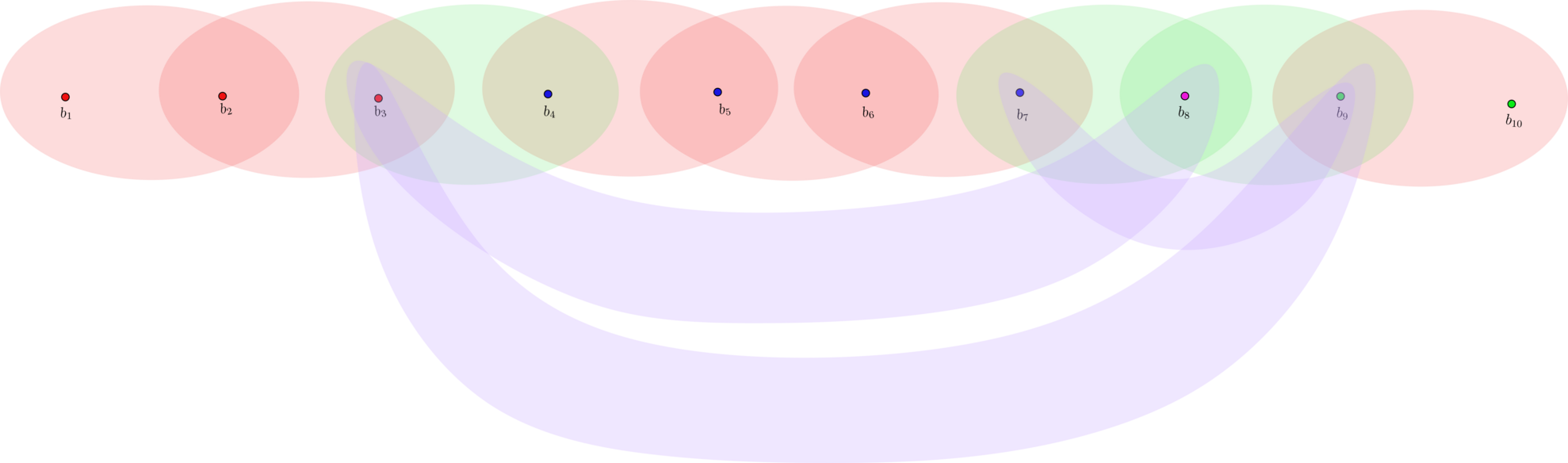}
	\caption{Support disks for $n=10$,
$\mathcal{P}=
		\{\{1,2,3\},\{4,5,6,7\},
		\{8\},\{9,10\}\}$.
Red: \artindisks.
Green: adjacent \loopdisks.
Purple: general \loopdisks.}
	\label{fig:support_disks}
\end{figure}

\begin{itemize}
	\item \textbf{\artindisks:} The support $D_i \subset \cc$
	of the Artin generator $\sigma_i$ contains only the
	branch points $b_i$ and $b_{i+1}$, which belong to the
	same partition block ($\pt(i) = \pt(i+1)$); hence
	$k_{\pt(i)} = k_{\pt(i+1)}$.
	\item \textbf{\loopdisks:} The support
	$D_{i,j}=\beta_{i,j}(D_{h_i})$ of $A_{i,j}$
	contains only $b_{h_i}$ and
	$b_{h_{j-1}+1}$, whose exponents are $k_i$
	and $k_j$. The disk $D_{i,j}$ passes below every
	intermediate branch point. For adjacent
	\loopdisks\ ($j=i+1$), we also denote the disk
	by $D_{h_i}$.						
\end{itemize}

\textbf{Indexing convention.}
For \artindisks\ or adjacent \loopdisks,
the single index ($D_i$, $\omega_i$) refers to a
branch point index; for general \loopdisks\ the
double index ($D_{i,j}$, $\omega_{i,j}$) refers to
partition indices.

Let $\dd$ be a support disk.  For an
\artindisk, both branch points lie in the same partition block
$P_i$.  For a
\loopdisk, the branch points lie in distinct blocks
$P_i$ and $P_j$.  Set
$e_i := \gcd(d, k_i)$ and
$e_{i,j} := \gcd(d, k_i, k_j)$.
These equal the number of components of
$p^{-1}(\dd)$ for an \artindisk\ and a \loopdisk\
respectively.

The monodromy image of
$\pi_1(\dd\setminus\{\text{two branch points}\})$
in $G \cong \zd$ is the subgroup generated by
$\{k_{\pt(i)}\}$ if $\dd$ is an \artindisk, and
$\{k_i, k_j\}$ if $\dd$ is a \loopdisk\ with
endpoints in $P_i$ and $P_j$.  Its
index is $e_i$ or $e_{i,j}$.  By covering space
theory, $\wt\dd := p^{-1}(\dd)$ splits into
that many components
$\wt\dd^{\,\alpha}$
($0 \le \alpha < e_i$ or $e_{i,j}$),
cyclically permuted by $T$.

By Riemann--Hurwitz applied to
$p\colon \wt D_{i,j}^{\,\alpha} \to D_{i,j}$,
the genus of each component of a \loopdisk\ is
\begin{align*}
	2g_{\wt D_{i,j}^\alpha}
	= \frac{d - e_i - e_j}{e_{i,j}}
	+ 2 -
	\frac{\gcd(k_i + k_j, d)}{e_{i,j}}.
\end{align*}
For an \artindisk\ $D_i$, the relation
$k_{\pt(i)} = k_{\pt(i+1)}$ forces both points into
$P_{\pt(i)}$, and
\begin{align*}
	2g_{\wt D_i^{\,\alpha}}
	= \frac{d - \gcd(2k_{\pt(i)}, d)}{e_{\pt(i)}}.
\end{align*}

The global Riemann--Hurwitz theorem, summing
over all branch points with
$e_\infty := \gcd(d, \sum_{i=1}^n k_{\pt(i)})$,
gives the genus of $X$:
\begin{align*}
	2g_X = (n-1)d
	- \sum_{i=1}^n e_{\pt(i)} - e_\infty + 2.
\end{align*}

\subsection{The Model Cover}\label{subsection:model_cover}

We analyze a standard model to compute the local cohomology
above a support disk. Consider the cyclic cover $Y$ over
$\pp^1$ defined by
\begin{align*}
w^{d} = (z-1)^{x}(z+1)^{y},
	\qquad \gcd(d, x, y) = 1.
\end{align*}
The deck group is $G := \zd$, generated by the transformation
$S(z,w) = (z, e^{2\pi i/d}w)$.

Let $\mdd := \mdd(0,2)$ be the open disk of radius $2$
containing the branch points $\pm 1$. We call $\mdd$ the
\emph{model disk}. Since $\gcd(d, x, y) = 1$, the monodromy
image is all of $\zd$, so the preimage
$\wt\mdd := p^{-1}(\mdd)$ is connected.

Let $\Omega := \pp^1 \setminus \mdd$ and
$\wt\Omega := p^{-1}(\Omega)$. Evaluating the monodromy
map $\Phi$ on the boundary loop $\partial\mdd$ yields
$\Phi([\partial\mdd])=x+y\pmod d$. Thus each
component of $\wt\Omega$ is stabilized by the
subgroup generated by
$x + y$.  Set
$e_x := \gcd(d, x)$, $e_y := \gcd(d, y)$, and
$e_\infty := \gcd(d, x + y)$.  The preimage
$\wt\Omega$ splits into $e_\infty$ disjoint disks
$\wt\Omega^\alpha := S^\alpha(\wt\Omega^{\,0})$ for
$0 \le \alpha < e_\infty$.

We compute $H^1_c(\wt\mdd)$ via the long exact
sequence of the pair $(Y, \wt\Omega)$.  Since
$\wt\Omega$ is a union of disjoint disks,
$H^1(\wt\Omega) = 0$, and the sequence reduces to
\begin{align*}
	0 \to H^0(Y, \wt\Omega)
	\to H^0(Y)
	\to H^0(\wt\Omega)
	\to H^1(Y, \wt\Omega)
	\to H^1(Y) \to 0.
\end{align*}
By excision, $H^1(Y, \wt\Omega) \cong
H^1_c(\wt\mdd)$.  The maps are $S$-equivariant.
Restrict to $s$-eigenspaces for $s \in \mu'(d)$.
Since $Y$ is connected, $H^0(Y)_s = 0$, yielding
\begin{align}\label{eqn:model_cover_short_exact}
	0 \to H^0(\wt\Omega)_{s}
	\to H^1_c(\wt\mdd)_{s}
	\to H^1(Y)_{s} \to 0.
\end{align}

\begin{proposition}\label{PROP:model_disk_cohomology_dimension}
For every $s \in \mu'(d)$,
	\begin{align*}
	\dim H^1_c(\wt\mdd)_{s} =
	\begin{cases}
		0 & s^{x}=1 \;\text{or}\; s^{y}=1,
			\;(\text{equivalently }
s\in\mu(e_x)\cup\mu(e_y)),
			\\[2pt]
		1 & \text{otherwise.}
	\end{cases}
	\end{align*}
\end{proposition}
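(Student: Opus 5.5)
The plan is to compute both outer terms of the short exact sequence \eqref{eqn:model_cover_short_exact} and add their dimensions. Both are available from results already in the paper.

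\emph{The term $H^0(\wt\Omega)_s$.} This is the function space on the $e_\infty$ components $\wt\Omega^\alpha$, and $S$ permutes these components cyclically. Hence $H^0(\wt\Omega)$ is the regular representation of $\zd/\langle x+y\rangle \cong \zz/e_\infty\zz$, pulled back to $\zd$. A character $s$ therefore occurs with multiplicity one exactly when $s$ is trivial on the stabilizer $\langle S^{x+y}\rangle$, that is, when $s^{x+y}=1$. Otherwise it does not occur. So
\begin{align*}
\dim H^0(\wt\Omega)_s = \begin{cases} 1 & s^{x+y}=1,\\ 0 & \text{otherwise}.\end{cases}
\end{align*}
One caveat: the sign convention, $S^*$ versus $(S^{-1})^*$, does not matter here, because the condition $s^{x+y}=1$ is invariant under $s\mapsto s^{-1}$.

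\emph{The term $H^1(Y)_s$.} Here I apply Proposition~\ref{prop:dim_cyclic_cover} to $Y$, which is a cyclic cover with $n=2$ finite branch points $\pm1$ carrying exponents $x$ and $y$, and with $k_\infty\equiv -(x+y)$. That proposition was stated for distinct block exponents, but its proof is a pure Chevalley--Weil count that never uses distinctness. So it applies verbatim to two points whose exponents may coincide or vanish mod $d$; the only inputs are the fractional-part formula and $\{a\}+\{-a\}\in\{0,1\}$. The count gives
\begin{align*}
\dim H^1(Y)_s = 2-1-r_s-\delta_\infty,
\end{align*}
where $r_s=\#\{b\in\{x,y\}: s^b=1\}$ and $\delta_\infty=1$ iff $s^{x+y}=1$. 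A small sanity point is needed for $x\equiv0$ or $y\equiv0$: such a point is not a branch point, but it still contributes $\{0\}=0$ to the count, so the formula is unchanged.

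\emph{Adding and checking cases.} The total is
\begin{align*}
\dim H^1_c(\wt\mdd)_s = [s^{x+y}=1] + 1 - r_s - [s^{x+y}=1] = 1-r_s.
\end{align*}
If neither $s^x$ nor $s^y$ equals $1$, this gives $1$. If exactly one of them equals $1$, it gives $0$. Both cannot equal $1$: since $\gcd(d,x,y)=1$, the conditions $s^x=s^y=1$ force $s=1$, contradicting $s\in\mu'(d)$. This settles the claimed dichotomy. The equivalence with $s\in\mu(e_x)\cup\mu(e_y)$ is immediate, because for a $d$-th root of unity $s$ we have $s^x=1$ iff $s^{\gcd(d,x)}=1$.

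The main obstacle I expect is making sure the dimension formula for $H^1(Y)_s$ is justified in this generality. The alternative is to redo the Chevalley--Weil computation for the two-point cover directly, which is routine.
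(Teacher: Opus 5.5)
Your proof is correct and is the paper's argument: you add the dimensions of the two outer terms of \eqref{eqn:model_cover_short_exact}, computing $H^0(\wt\Omega)_s$ from the cyclic permutation of the $e_\infty$ components and $H^1(Y)_s$ from Proposition~\ref{prop:dim_cyclic_cover}, then use $\gcd(d,x,y)=1$ to rule out $s^x=s^y=1$. The only difference is bookkeeping: you track $\delta_\infty$ explicitly so that it cancels the $H^0$ term, whereas the paper folds $\infty$ into $r_s$ and splits into cases using the pairwise coprimality of $e_x,e_y,e_\infty$.
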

\begin{proof}
The space $H^0(\wt\Omega)$ is spanned by the constant
functions on its $e_\infty$ components. These components
are cyclically permuted by $S$. Therefore:
	\begin{align*}
		\dim H^0(\wt\Omega)_{s} =
		\begin{cases}
			1 & \text{if } s \in \mu(e_{\infty}), \\
			0 & \text{otherwise.}
		\end{cases}
	\end{align*}
Apply Proposition~\ref{prop:dim_cyclic_cover} to
$Y$.  The cover has two finite branch points
and one possibly at infinity.  Since
$\gcd(d, x, y) = 1$, the integers
$e_x, e_y, e_\infty$ are pairwise coprime,
so $r_s \in \{0,1\}$ for $s \neq 1$.  Hence
	\begin{align*}
		\dim H^1(Y)_s = 1 - r_s =
		\begin{cases}
			0 & s \in \mu(e_\infty) \cup
			\mu(e_x) \cup \mu(e_y), \\
			1 & \text{otherwise.}
		\end{cases}
	\end{align*}

By dimension additivity on
	\eqref{eqn:model_cover_short_exact}, we have
$\dim H^1_c(\wt\mdd)_{s} = \dim H^0(\wt\Omega)_s
	+ \dim H^1(Y)_s$.
If $s \in \mu'(e_\infty)$, the first summand is $1$
and the second is $0$, yielding
$\dim H^1_c(\wt\mdd)_{s} = 1$. If
$s \notin \mu(e_\infty)$, the first summand vanishes,
and we find $\dim H^1_c(\wt\mdd)_{s} = \dim H^1(Y)_s$.
This dimension equals $1$ if and only if
$s \notin \mu(e_x) \cup \mu(e_y)$. Because the set
$\mu'(e_\infty)$ is contained in
$\mu'(d) \setminus (\mu(e_x) \cup \mu(e_y))$,
we have that
$\dim H^1_c(\wt\mdd)_s$ equals $1$ if
$s \notin \mu(e_x) \cup \mu(e_y)$, and $0$ otherwise.
\end{proof}

\subsection{Local Eigenspaces of the Support Disks}

Let $\dd \subset \cc$ be a support disk containing
exactly two branch points, as classified in
Subsection~\ref{subsection:local_geometry_supports}.
If $\dd$ is an \artindisk\ with branch points 
in the partition $P_i$, set
$e_i := \gcd(d, k_i)$; the preimage $\wt\dd$
consists of $e_i$ components
$\wt\dd^{\,\alpha}$
($0 \le \alpha < e_i$) cyclically permuted by
$T$.  If $\dd$ is a \loopdisk\ with branch points
in the partition $P_i$ and $P_j$, set
$e_{i,j} := \gcd(d, k_i, k_j)$; the preimage
consists of $e_{i,j}$ components.

After placing the branch points at $z = \pm 1$,
each component
$p|_{\wt\dd^{\,\alpha}}\colon
\wt\dd^{\,\alpha} \to \dd$
is isomorphic to the restriction over $\mdd$ of
the model cover
\begin{align*}
w^{d/e} = (z-1)^{\kappa_1/e}(z+1)^{\kappa_2/e},
\end{align*}
where $(\kappa_1, \kappa_2, e) = (k_i, k_i, e_i)$
for an \artindisk\ and
$(\kappa_1, \kappa_2, e) = (k_i, k_j, e_{i,j})$
for a \loopdisk.  The deck transformation $S$
of this model cover corresponds to $T^e$ on
the full cover.

\begin{remark}\label{remark:Relation_eigenvalues_s_and_t}
In this subsection, a subscript $s$
denotes the $S$-eigenspace and a
subscript $t$ the $T$-eigenspace.
Since $S = T^e$, the eigenvalues satisfy
$s = t^e$.
\end{remark}

Applying
Proposition~\ref{PROP:model_disk_cohomology_dimension}
with parameters $(d/e, \kappa_1/e, \kappa_2/e)$,
for $s \in \mu'(d/e)$ the dimension on a single
component is:
\begin{equation}
\label{eqn:dim_component}
	\dim H^1_c(\wt\dd^{\,\alpha})_s =
	\begin{cases}
		0 & s \in \mu(e_1/e)
			\cup \mu(e_2/e),\\[2pt]
		1 & \text{otherwise},
	\end{cases}
\end{equation}
where $e_1 = \gcd(d, \kappa_1)$,
$e_2 = \gcd(d, \kappa_2)$.
In either case (Artin and loop) 
the dimension is $0$ on the
trivial eigenspace $s = 1$ and $1$ on all
other eigenspaces whose eigenvalues avoid
$\mu(e_1/e) \cup \mu(e_2/e)$.

\begin{proposition}\label{Prop:Dim_Disk_2_branch_points}
For $t \in \mu'(d)$,
	\begin{align*}
		\dim H^1_c(\wt\dd)_t =
		\begin{cases}
			0 & t^{k_i}=1,
			\\[4pt]
			1 & \text{otherwise},
		\end{cases}
	\end{align*}
if $\dd$ is an \artindisk\ with both branch
points in $P_i$; and
	\begin{align*}
		\dim H^1_c(\wt\dd)_t =
		\begin{cases}
			0 & t^{k_i}=1 \;\text{or}\;
t^{k_j}=1,
			\\[4pt]
			1 & \text{otherwise},
		\end{cases}
	\end{align*}
if $\dd$ is a \loopdisk\ with endpoints in
$P_i$ and $P_j$.
\end{proposition}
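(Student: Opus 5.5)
The plan is to reduce the computation on the disconnected preimage $\wt\dd$ to a single component and then apply \eqref{eqn:dim_component}. Write $e$ for $e_i$ (Artin case) or $e_{i,j}$ (loop case). Then $\wt\dd=\bigsqcup_{\alpha=0}^{e-1}\wt\dd^{\,\alpha}$, and the components are cyclically permuted by $T$ with $T(\wt\dd^{\,\alpha})=\wt\dd^{\,\alpha+1}$. Hence
\begin{align*}
H^1_c(\wt\dd)=\bigoplus_{\alpha=0}^{e-1}H^1_c(\wt\dd^{\,\alpha}),
\end{align*}
and $T$ acts on this sum as an induced representation from the stabilizer $\langle T^e\rangle=\langle S\rangle$. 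The standard computation for induced modules then gives an isomorphism
\begin{align*}
H^1_c(\wt\dd)_t\;\xrightarrow{\;\sim\;}\;H^1_c(\wt\dd^{\,0})_{t^e}.
\end{align*}
Concretely, a $t$-eigenvector is determined by its $\alpha=0$ component $c_0$. The other components are forced to be $c_\alpha=t^{\alpha}(T^{-1})^{*\alpha}$-translates of $c_0$, up to the appropriate sign convention. The only constraint is closing up after $e$ steps, which says that $c_0$ is an $S$-eigenvector with eigenvalue $s=t^e$ (Remark~\ref{remark:Relation_eigenvalues_s_and_t}). I will write out this projection and its inverse explicitly using $(T^{-1})^*$, so that the eigenvalue matching $s=t^e$ is justified rather than merely asserted.

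Next I evaluate the single-component dimension with \eqref{eqn:dim_component}, taking $s=t^e\in\mu(d/e)$. The case $s=1$ needs separate treatment, because \eqref{eqn:dim_component} was only stated for $s\in\mu'(d/e)$. If $t^e=1$, then $t^{k}=1$ for every $k$ that is a multiple of $e$. In the Artin case this gives $t^{k_i}=1$; in the loop case it gives both $t^{k_i}=1$ and $t^{k_j}=1$. In either case the claimed answer is $0$. On the other side, $H^1_c(\wt\dd^{\,0})_1$ is the invariant part, and it vanishes: by \eqref{eqn:model_cover_short_exact} and its analogue for $s=1$, the invariant part is $H^1_c(\mdd\setminus\{\pm1\})$-type data of a disk, which is zero. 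Alternatively, I can invoke the remark after \eqref{eqn:dim_component} that the dimension is $0$ on the trivial eigenspace.

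For $s\neq1$, \eqref{eqn:dim_component} gives dimension $0$ exactly when $s\in\mu(e_1/e)\cup\mu(e_2/e)$, where $e_1=\gcd(d,\kappa_1)$ and $e_2=\gcd(d,\kappa_2)$. It remains to translate this condition back to $t$. In the model, $s^{\kappa_1/e}$ is the model condition, and
\begin{align*}
s^{\kappa_1/e}=t^{e\cdot\kappa_1/e}=t^{\kappa_1}.
\end{align*}
Moreover, $s\in\mu(e_1/e)$ is equivalent to $s^{\kappa_1/e}=1$, by Proposition~\ref{PROP:model_disk_cohomology_dimension} applied with parameters $(d/e,\kappa_1/e,\kappa_2/e)$. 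So the vanishing condition is $t^{\kappa_1}=1$ or $t^{\kappa_2}=1$. In the Artin case $\kappa_1=\kappa_2=k_i$, which gives exactly $t^{k_i}=1$. In the loop case $(\kappa_1,\kappa_2)=(k_i,k_j)$, which gives $t^{k_i}=1$ or $t^{k_j}=1$. This is the statement.

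The main obstacle is the first step, making the induced-representation identification rigorous and bookkeeping the eigenvalue correctly. Two points need care. First, the $(T^{-1})^*$ convention must be used consistently when transporting classes between components. Second, the identification of $T^e|_{\wt\dd^{\,0}}$ with the model deck generator $S$ must hold with the right exponent, that is, with $S$ itself and not some power $S^u$. For the latter I would check it via monodromy: the loop around a branch point in $\dd$ maps to $\kappa_1$ under $\Phi$. In the model cover of degree $d/e$, the same loop maps to $\kappa_1/e$, so the model generator corresponds to $T^e$. Any unit ambiguity $u$ would anyway only permute the eigenvalues within $\mu(d/e)$ while preserving the conditions $s^{\kappa/e}=1$, so the final answer is insensitive to it.
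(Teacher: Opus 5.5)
Your proof is correct. It differs from the paper's in how the upper bound $\dim H^1_c(\wt\dd)_t\le 1$ is obtained.

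\textbf{Where the two proofs agree.} The paper uses the same restriction observation you do: a $t$-eigenvector restricts on each component to an eigenvector of $S=T^{e}$ with $s=t^{e}$. That gives the vanishing case. It also uses the same averaged class $\sum_\alpha \ov t^{\,\alpha}\,((T^{-1})^*)^{\alpha}\omega^{\,0}$ to produce a non-zero eigenvector when $t^{k_i}\neq1$ and $t^{k_j}\neq1$. The coefficient there is $\ov t^{\,\alpha}$, which is what your closing-up computation will produce in place of your $t^{\alpha}$.

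\textbf{Where they differ.} The paper never uses that an eigenvector is determined by its $\alpha=0$ component. Instead it computes $\dim H^1_c(\wt\dd)=d-e_i-e_j+e_{i,j}$ from the Riemann--Hurwitz genus count. It observes that this equals the number of $t\in\mu'(d)$ with $t^{k_i}\neq1$ and $t^{k_j}\neq1$. It then concludes by counting that each such eigenspace is exactly one-dimensional.

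Your induced-module isomorphism $H^1_c(\wt\dd)_t\cong H^1_c(\wt\dd^{\,0})_{t^{e}}$ works one eigenvalue at a time. It needs no Euler characteristic computation and handles the Artin and loop cases uniformly. The paper's count, by contrast, needs the non-zero construction for all admissible $t$ simultaneously before it can bound any single eigenspace. Its advantage is that it doubles as a consistency check on the genus formulas.

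\textbf{The case $s=1$.} You are more careful than the paper here, since \eqref{eqn:dim_component} was only stated for $s\neq1$. There is one slip in your justification. The invariant part $H^1_c(\wt\dd^{\,0})^{S}$ is $H^1_c$ of the quotient $\wt\dd^{\,0}/\langle S\rangle=\dd$, the whole disk including its branch points, and that vanishes. $H^1_c(\mdd\setminus\{\pm1\})$ is two-dimensional, so it is not the relevant object. Equivalently, take the invariant part of the pair sequence for $(Y,\wt\Omega)$. There the map $H^0(Y)_1\to H^0(\wt\Omega)_1$ is an isomorphism $\cc\to\cc$, and $H^1(Y)_1=H^1(\pp^1)=0$.
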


\begin{proof}
We prove the \loopdisk\ case; the \artindisk\
case is analogous.  An eigenvector in
$H^1_c(\wt\dd)_t$ restricts to an
$s$-eigenvector of $S = T^{e_{i,j}}$ on
each component, with $s = t^{e_{i,j}}$.
The condition $t^{k_i}=1$ or $t^{k_j}=1$
is equivalent to
$s \in \mu(e_i/e_{i,j})
	\cup \mu(e_j/e_{i,j})$.
If $t^{k_i}=1$ or $t^{k_j}=1$,
	\eqref{eqn:dim_component} forces the
$s$-eigenspace on every component to
vanish, so $H^1_c(\wt\dd)_t = 0$.

If $t^{k_i} \neq 1$ and $t^{k_j} \neq 1$,
	\eqref{eqn:dim_component} gives a
$1$-dimensional $s$-eigenspace on a single
component $\wt\dd^{\,0}$.  Pick a non-zero
$\omega^{\,0} \in H^1_c(\wt\dd^{\,0})_s$
and define
$\omega^{\,\alpha} = (T^{-1})^*\omega^{\,\alpha-1}$
iteratively.  The sum
	\begin{equation}\label{eqn:generator_atoms}
		\omega :=
		\sum_{\alpha=0}^{e_{i,j}-1}
		\ov t^{\,\alpha}\,\omega^{\,\alpha}
	\end{equation}
forms a non-zero element of
$H^1_c(\wt\dd)_t$.  By Riemann--Hurwitz
applied to the branched cover
$\wt\dd \to \dd$, the total dimension is
$\dim H^1_c(\wt\dd)
	= d - e_i - e_j + e_{i,j}$.
This integer equals the number of
$t \in \mu'(d)$ with $t^{k_i} \neq 1$
and $t^{k_j} \neq 1$.  Since we have
constructed a non-zero element for each
such $t$, and the direct sum of these
eigenspaces cannot exceed the total
dimension, each non-trivial eigenspace
is exactly one-dimensional.
\end{proof}

\begin{remark}\label{rem:generator_notation}
We establish notation for a generator of
$H^1_c(\wt\dd)_t$ (non-zero precisely
when the stated exponents satisfy
$t^{k} \neq 1$):
	\begin{description}
		\item[$\omega_i$] for an \artindisk\
support $D_i$
			($t^{k_{\pt(i)}} \neq 1$),
		\item[$\omega_{h_i}$]
			($=\omega_{i,i+1}$) for an
adjacent \loopdisk\ support
			($t^{k_i}, t^{k_{i+1}} \neq 1$),
		\item[$\omega_{i,j}$] for a general
			\loopdisk\ support $D_{i,j}$
			($t^{k_i}, t^{k_j} \neq 1$),
		\item[$\omega_\sigma$] for a generic
support disk associated with
$\sigma \in \bnp$.
	\end{description}
Explicit geometric choices are
constructed in
Section~\ref{section:intersection_pairings}
	(see also Proposition~\ref{thm:spanning_via_intersection}).
\end{remark}

\begin{remark}[$t$-invisible blocks]
\label{rem:t_invisible_blocks}
Proposition~\ref{Prop:Dim_Disk_2_branch_points}
shows: if $t^{k_i}=1$, then
$\dim H^1_c(\wt\dd)_t = 0$ for every
support disk with a branch point in
$P_i$. We call such a
partition block \emph{$t$-invisible}.  These blocks
contribute nothing to $\rho_t$.
Consequently $\rho_t$ factors through
$\forgmap$
	(Subsection~\ref{subsection:mixed_braid_group_definition}),
proved in
Section~\ref{section:action_mbg}.
\end{remark}

\begin{lemma}\label{lem:global_injectivity}
Let $t \in \mu'(d)$.  If
$H^1(X)_t \neq 0$, then the
extension-by-zero map
$\iota_!\colon H^1_c(\wt\dd)_t
	\to H^1(X)_t$ is injective.
\end{lemma}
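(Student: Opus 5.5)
The plan is to read off the kernel of $\iota_!$ from the long exact sequence of a pair, exactly as in the model-cover computation of Subsection~\ref{subsection:model_cover}. Set $\Omega := \pp^1 \setminus \dd$ and $\wt\Omega := p^{-1}(\Omega)$. By excision, $H^1_c(\wt\dd) \cong H^1(X, \wt\Omega)$, and under this identification $\iota_!$ is the map $H^1(X,\wt\Omega) \to H^1(X)$. The relevant segment of the long exact sequence is
\begin{align*}
	H^0(X) \to H^0(\wt\Omega) \to H^1(X,\wt\Omega) \xrightarrow{\;\iota_!\;} H^1(X).
\end{align*}
All maps are $T$-equivariant, so we may pass to $t$-eigenspaces. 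Since $X$ is connected and $t \neq 1$, we have $H^0(X)_t = 0$. Hence $\ker(\iota_!)_t \cong H^0(\wt\Omega)_t$, and it suffices to show: if $H^0(\wt\Omega)_t \neq 0$ and $H^1_c(\wt\dd)_t \neq 0$, then $H^1(X)_t = 0$.

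Next we compute $H^0(\wt\Omega)_t$. The set $\Omega$ is a closed disk in $\pp^1$ containing the $n-2$ branch points outside $\dd$, together with $\infty$. The components of $\wt\Omega$ are cyclically permuted by $T$, with stabilizer generated by $T^{g}$, where
\begin{align*}
	g = \gcd\bigl(d,\; k_{\pt(l)} \ (b_l \notin \dd),\; k_\infty\bigr).
\end{align*}
This is the index of the monodromy image of $\pi_1(\Omega \setminus B)$; the boundary loop adds nothing new, since its monodromy is a combination of the others. As in the proof of Proposition~\ref{PROP:model_disk_cohomology_dimension}, the space of locally constant functions then has a nonzero $t$-eigenvector iff $t^{g}=1$. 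This holds iff $t^{k_{\pt(l)}}=1$ for every $b_l \notin \dd$ and also $t^{k_\infty}=1$.

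Now assume both $H^0(\wt\Omega)_t \neq 0$ and $H^1_c(\wt\dd)_t \neq 0$; otherwise injectivity is trivial. By Proposition~\ref{Prop:Dim_Disk_2_branch_points}, the two branch points in $\dd$ are not $t$-invisible. Every other branch point satisfies $t^{k_{\pt(l)}}=1$. Therefore no other branch point can share a block with the two points in $\dd$: those blocks are visible, so a point in one of them would violate $t^{k_{\pt(l)}}=1$. Consequently the $t$-invisible blocks consist exactly of the $n-2$ outside points, so $r_t = n-2$. Combined with $t^{k_\infty}=1$, Proposition~\ref{prop:dim_cyclic_cover} gives
\begin{align*}
	\dim H^1(X)_t = n-2-r_t = 0,
\end{align*}
contradicting the hypothesis $H^1(X)_t \neq 0$.

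The step I expect to need the most care is the identification of $H^0(\wt\Omega)_t$. This requires checking that the components of $\wt\Omega$ really are indexed by $G/\langle T^g\rangle$ with $g$ as stated. It also requires handling uniformly the case $k_\infty \equiv 0$, where $\infty$ is unbranched: there $t^{k_\infty}=1$ holds automatically, and Proposition~\ref{prop:dim_cyclic_cover} uses the $n-2-r_t$ branch accordingly. Finally, one must confirm that the excision identification matches the extension-by-zero map $\iota_!$.
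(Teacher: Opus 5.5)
Your proof is correct and follows the paper's own argument. The excision sequence with $H^0(X)_t=0$ identifies $\ker\iota_!$ with $H^0(\wt\Omega)_t$; its nonvanishing forces $t^{k}=1$ for every branch point outside $\dd$ and for $\infty$; and Proposition~\ref{prop:dim_cyclic_cover} then gives $\dim H^1(X)_t=0$. Your use of the visibility of the two points in $\dd$ to get $r_t=n-2$ exactly is actually more careful than the paper. The paper asserts $r_t\ge n-1$, but its argument only establishes $r_t\ge n-2$, which together with $t^{k_\infty}=1$ already suffices.
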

\begin{proof}
Let $\Omega := \pp^1 \setminus \dd$ and
$\wt\Omega := p^{-1}(\Omega)$.
The excision sequence for
$(X, \wt\Omega)$ reduces to:
	\begin{align*}
		0 \longrightarrow H^0(\wt\Omega)_t
		\longrightarrow H^1_c(\wt\dd)_t
		\xrightarrow{\;\iota_!\;}
H^1(X)_t,
	\end{align*}
since $H^0(X)_t = 0$ for $t \neq 1$.
Thus $\iota_!$ is injective exactly
when $H^0(\wt\Omega)_t = 0$.

Suppose $H^0(\wt\Omega)_t \neq 0$.  Take
any finite branch point
$b_\ell \in \Omega$.  The deck
transformation $T^{-k_{\pt(\ell)}}$ fixes
the preimages of $b_\ell$, hence acts
as the identity on $H^0(\wt\Omega)$.
Thus any $\omega \in H^0(\wt\Omega)_t$
satisfies
$\omega = (T^{-k_{\pt(\ell)}})^*\omega
	= t^{k_{\pt(\ell)}}\omega$, forcing
$t^{k_{\pt(\ell)}} = 1$.  The same
argument with $k_\infty$ applies to
$\infty$.  Therefore $r_t \ge n-1$
and $t^{k_\infty}=1$.
Proposition~\ref{prop:dim_cyclic_cover}
gives $\dim H^1(X)_t = 0$.
Hence $H^1(X)_t \neq 0$ implies
$\iota_!$ is injective.
\end{proof}

\section{Intersection Pairings and Generating Classes}%
\label{section:intersection_pairings}

We extend the local classes of
Section~\ref{section:generators} to global
classes on $X$ and compute their Hermitian
intersection pairings.  For each element of
$H^1(X)_t$, we select a base contour in
the corresponding support disk and form a
$\ov t$-polynomial combination of its
lifts.  This yields a cycle in $H_1(X)_t$
representing its Poincar\'e dual.  Signed
geometric intersections of these cycles determine the
intersection matrix. Its rank computation in
Proposition~\ref{thm:spanning_via_intersection} proves that
the classes span $H^1(X)_t$.

\subsection{Construction of Poincar\'e Duals}
\label{subsection:construction_poincare_duals}

Let $\ell_k$ denote a simple counterclockwise loop around
$b_k$.

\begin{figure}[ht]
	\centering
	\begin{minipage}{0.7\textwidth}
		\centering
		\begin{minipage}{0.48\linewidth}
			\centering
			\includegraphics[width=\linewidth]
				{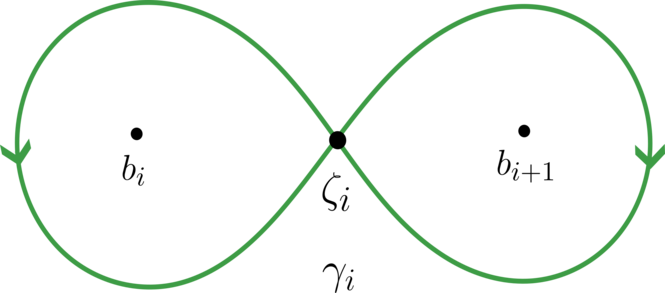}
			\caption*{Artin generator}
		\end{minipage}
		\hfill
		\begin{minipage}{0.48\linewidth}
			\centering
			\includegraphics[width=\linewidth]
				{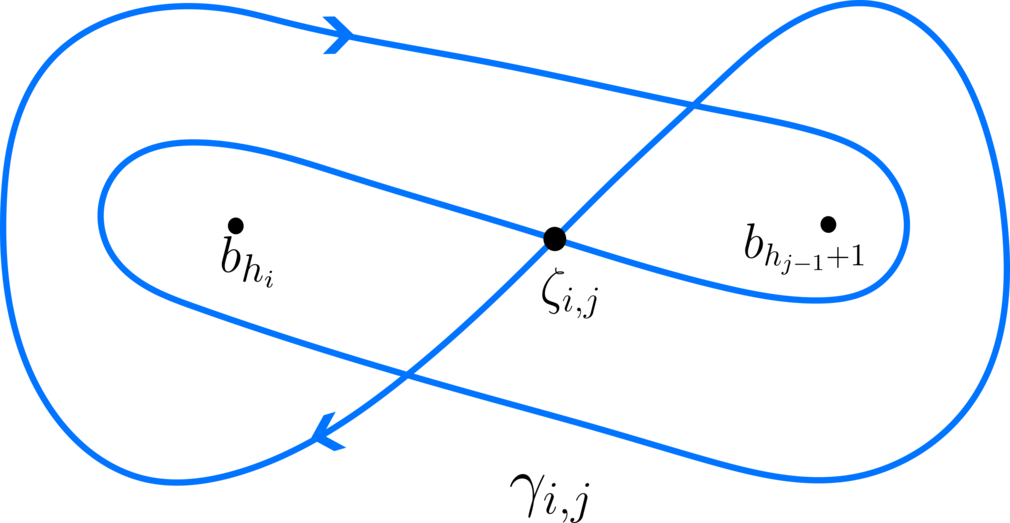}
			\caption*{Loop generator}
		\end{minipage}
		\caption{Contours for the Artin
and loop generators.}
		\label{fig:generators}
	\end{minipage}
\end{figure}

For an \artindisk\ $D_i$, define the figure-eight contour
$\gamma_i:=\ell_i\ov{\ell_{i+1}}$ based at $\zeta_i$.
Since $k_{\pt(i)}=k_{\pt(i+1)}$, its monodromy is
trivial. For a \loopdisk\ $D_{i,j}$, define the
Pochhammer contour $\gamma_{i,j} := 
[\ell_{h_i},\ell_{h_{j-1}+1}]$
based at $\zeta_{i,j}$. Here $[a,b]:=aba^{-1}b^{-1}$,
which fixes the orientation of the Pochhammer contour.
Its monodromy is trivial because it is a commutator.
When $j=i+1$, abbreviate
$D_{i,i+1}$, $\gamma_{i,i+1}$, and $\zeta_{i,i+1}$ by
$D_{h_i}$, $\gamma_{h_i}$, and $\zeta_{h_i}$. These
contours have zero total winding number.

Since each base contour $\gamma$ lies in $\ker\Phi$, its
$d$ lifts are closed immersed curves in $X$.
Fix a base point $\wt\zeta^{\,0}$ above $\zeta$
and write $\wt\zeta^{\,g} :=
T^g(\wt\zeta^{\,0})$. The family of lifts
$\wt\gamma^{\,g}$ ($g\in G$) yields the
following $t$-eigenspace cycles:
\begin{align}\label{eqn:artin_atom}
	\wt\gamma_i :=
	\sqrt{\frac{2}{d}} \frac{t^{k_{\pt(i)}}}{1-t^{k_{\pt(i)}}}
	\sum_{g\in G}
	\ov{t}^g\,\wt\gamma_i^{\,g},
\end{align}
\begin{align}\label{eqn:mixed_atom}
	\wt\gamma_{i,j} :=
	\sqrt{\frac{2}{d}} \frac{t^{k_i+k_j}}{(1-t^{k_i})(1-t^{k_j})}
	\sum_{g\in G}
	\ov{t}^g\,
	\wt\gamma_{i,j}^{\,g}.
\end{align}
By Proposition~\ref{Prop:Dim_Disk_2_branch_points}, the
lifts $\wt\gamma^{\,g}$ correspond precisely to the
local spanning elements $\omega^{\,g}$ established
in its proof.  $T_*$ sends $\wt\gamma^{\,g}$ to
$\wt\gamma^{\,g+1}$, so $T_*\wt\gamma
= t\,\wt\gamma$.
These cycles are non-zero exactly when the local
eigenspaces are non-trivial. The Poincar\'e duality
isomorphism $\eta$ from
\eqref{eqn:global_poincare_isomorphism} preserves
eigenspaces, thus restricting to an isomorphism
$\eta\colon H_1(X)_t \xrightarrow{\;\sim\;} H^1(X)_t$.
Using this restriction, we define the explicit cohomology
generators of Remark~\ref{rem:generator_notation} as the
duals of these specific $t$-eigenspace cycles:
\begin{align*}
	\omega_i &:= \eta(\wt\gamma_i), \\
	\omega_{i,j} &:= \eta(\wt\gamma_{i,j}).
\end{align*}
We work with these explicit classes $\omega$ throughout
the remainder of the paper to compute intersection numbers,
study the mixed braid group action, and establish the
identification with the Burau representation.

\begin{remark}[Twisted cycle normalizations]
\label{rem:normalization}
The Artin normalization satisfies
	\begin{align*}
		\frac{t^k}{1-t^k}
		= -\frac{1}{1-t^{-k}}.
	\end{align*}
The minus sign records the chosen contour
orientation. For a loop class,
	\begin{align*}
		\frac{t^{k_i+k_j}}
		{(1-t^{k_i})(1-t^{k_j})}
		=
		\frac{1}
		{(1-t^{-k_i})(1-t^{-k_j})}.
	\end{align*}
These expressions match the regularization factors
for twisted cycles in
	\cite[Section~3]{KitaYoshida1994}. Counting
intersections on the finite cover reproduces the
homological pairings obtained there from twisted
chains.
\end{remark}

The cycle $\wt\gamma$ in
\eqref{eqn:artin_atom}--\eqref{eqn:mixed_atom}
depends on the choice of initial lift
$\wt\gamma^{\,0}$ among the $d$ lifts of
$\gamma$.  A different choice replaces
$\wt\gamma^{\,0}$ by $T^a(\wt\gamma^{\,0})$
for some $a$, which multiplies $\wt\gamma$
by $t^{\,a}$.  The intersection pairing
$\langle\omega,\omega'\rangle$ therefore
depends on the \emph{relative} lift choices
for $\omega$ and $\omega'$.  We fix these
choices once and for all as follows.

\begin{figure}[ht]
	\centering
	\includegraphics[width=\textwidth]
			{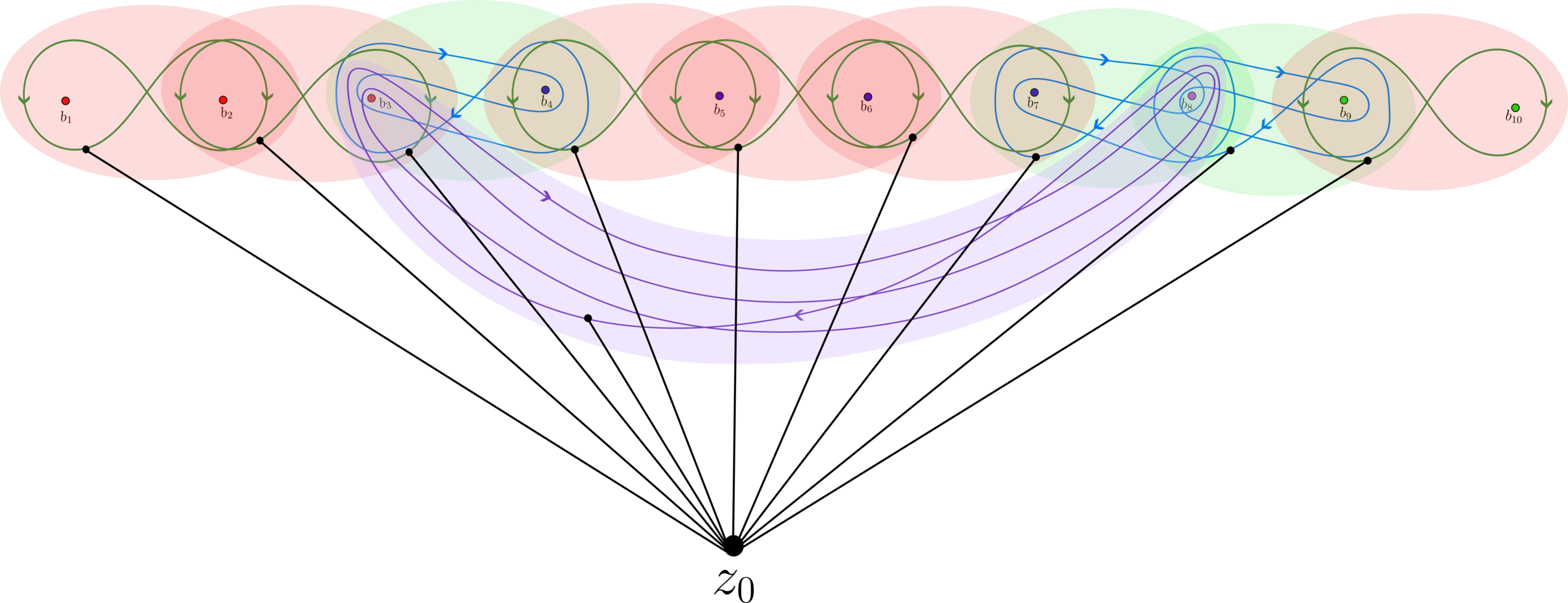}
	\caption{Base point convention
	for the example above.
	The black paths illustrate the
	convention described in
	Remark~\ref{rem:base_point_convention}.
	The label $\wt z_0$ records the
	chosen initial lift; the black
	paths are its projections from
	$z_0$.}
	\label{fig:base_point_convention}
\end{figure}

\begin{remark}[Base point and access-path conventions]
\label{rem:base_point_convention}
Choose $z_0\in\dd_\infty$ with $\operatorname{Im}(z_0)<0$ 
sufficiently far below every support disk. The
canonical lift is the identity on
$p^{-1}(\dd_\infty)$ and therefore fixes every point
above $z_0$.

For each $b_k$, use the upward branch cut
$b_k+\sqrt{-1}[0,\infty)$. The complement of these cuts is a simply-connected
slit plane containing $z_0$. For each $k$, let $r_k=[z_0,b_k]$ be the
straight line segment oriented from $z_0$ toward $b_k$. Its interior lies
in the slit plane. For a figure-eight contour $\gamma_i$, choose its base
point $\zeta_i$ to be the positive transverse intersection of
$r_i$ with the $\ell_i$ portion of $\gamma_i$. For a Pochhammer contour
$\gamma_{i,j}$, choose its base point $\zeta_{i,j}$ to be the 
positive transverse intersection of $r_{h_i}$ with the
$\ov\ell_{h_i}\,\ov\ell_{h_{j-1}+1}$ portion of $\gamma_{i,j}$. Use the
subsegments $[z_0,\zeta_i]\subset r_i$ and
$[z_0,\zeta_{i,j}]\subset r_{h_i}$, respectively, as the access paths.
	See Figure~\ref{fig:base_point_convention}.

Fix $\wt z_0^{\,0}\in p^{-1}(z_0)$. Lifting each
access path from $\wt z_0^{\,0}$ determines an
endpoint $\wt\zeta^{\,0}$ and hence an initial lift
$\wt\gamma^{\,0}$. These choices are used in every
sheet-label and intersection computation below.
\end{remark}

\begin{lemma}[Reduction formula]%
\label{lem:reduction_formula}
Let $\omega, \omega'$ be Poincaré duals of
$t$-eigenspace cycles
$\wt\gamma = C_{\gamma} \sqrt{2/d}
	\sum_{g\in G} \ov{t}^{\,g} \wt\gamma^{\,g}$
and $\wt\gamma' = C_{\gamma'} \sqrt{2/d}
	\sum_{g'\in G} \ov{t}^{\,g'} (\wt\gamma')^{\,g'}$.
Their intersection pairing is
	\begin{equation}
	\label{eqn:poincare_dual_intersection_form_expansion}
		\langle\omega,\omega'\rangle
		= C_{\gamma} \ov{C_{\gamma'}} \sqrt{-1}
		\sum_{g\in G}
t^{g}
		\langle\wt\gamma^{\,0},(\wt\gamma')^{\,g}\rangle.
	\end{equation}
\end{lemma}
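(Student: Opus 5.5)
The plan is to expand everything by sesquilinearity, use the $T$-invariance of algebraic intersection numbers to collapse the resulting double sum to a single sum, and then track the constants.

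First apply \eqref{eqn:poincare_dual_intersection_form}, which gives $\langle\omega,\omega'\rangle = \frac{\sqrt{-1}}{2}\langle\wt\gamma,\wt\gamma'\rangle$. Here the algebraic intersection number is extended to complex cycles so that it is linear in the first slot and conjugate-linear in the second, matching the Hermitian convention $\langle\omega,\omega'\rangle=\frac{\sqrt{-1}}{2}\int\omega\wedge\ov{\omega'}$. Substituting the two expansions gives
\begin{align*}
\langle\omega,\omega'\rangle
=\frac{\sqrt{-1}}{2}\cdot\frac{2}{d}\,C_\gamma\ov{C_{\gamma'}}
\sum_{g,g'\in G}\ov t^{\,g}\,t^{g'}\,
\langle\wt\gamma^{\,g},(\wt\gamma')^{\,g'}\rangle ,
\end{align*}
using $\ov{\ov t^{\,g'}}=t^{g'}$ and that $\sqrt{2/d}$ is real.

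Next, the deck transformation $T$ is an orientation-preserving homeomorphism, so algebraic intersection numbers are invariant under it. Since $T_*\wt\gamma^{\,g}=\wt\gamma^{\,g+1}$ and likewise for $\wt\gamma'$, we get
\[
\langle\wt\gamma^{\,g},(\wt\gamma')^{\,g'}\rangle=\langle\wt\gamma^{\,0},(\wt\gamma')^{\,g'-g}\rangle,
\]
with indices in $G\cong\zd$. Reindex by $h=g'-g$. The coefficient becomes $\ov t^{\,g}t^{g+h}=t^h$ because $|t|=1$, and it no longer depends on $g$. The sum over $g$ therefore contributes a factor $d$, which cancels the $2/d$ together with the $1/2$. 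What remains is exactly $C_\gamma\ov{C_{\gamma'}}\sqrt{-1}\sum_{h}t^h\langle\wt\gamma^{\,0},(\wt\gamma')^{\,h}\rangle$.

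I expect the main obstacle to be conventions rather than mathematics. First, one must confirm that the complexified intersection pairing is conjugate-linear in the second slot, consistent with \eqref{eqn:poincare_dual_intersection_form} and the definition of $\eta$; otherwise the sign of the exponent $t^{\pm h}$ flips. Second, the lifts $\wt\gamma^{\,g}$ are only immersed curves, so the intersection number should be read as the homological pairing of the cycles. It is well defined and $T$-invariant because each lift is a closed $1$-cycle in $X$, as noted in the construction.
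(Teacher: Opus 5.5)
Your proposal is correct and follows the paper's proof essentially step for step. Both arguments apply \eqref{eqn:poincare_dual_intersection_form}, expand sesquilinearly, use $G$-equivariance to replace $\langle\wt\gamma^{\,g},(\wt\gamma')^{\,g'}\rangle$ by $\langle\wt\gamma^{\,0},(\wt\gamma')^{\,g'-g}\rangle$, and reindex so that the now $g$-independent sum over $g$ cancels the factor $d$; your explicit remark about conjugate-linearity in the second slot makes precise a convention the paper uses implicitly.
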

\begin{proof}
	\eqref{eqn:poincare_dual_intersection_form}
gives $\langle\omega,\omega'\rangle =
	\tfrac{\sqrt{-1}}{2}
	\langle\wt\gamma,\wt\gamma'\rangle$.
Expanding the cycle definitions gives
	\begin{align*}
		\langle\wt\gamma,\wt\gamma'\rangle
		= C_{\gamma} \ov{C_{\gamma'}} \frac{2}{d}
	\sum_{g,g'\in G}
	\ov{t}^{\,g}t^{g'}
		\langle\wt\gamma^{g},(\wt\gamma')^{g'}\rangle.
	\end{align*}
By $G$-equivariance,
$\langle\wt\gamma^{g},(\wt\gamma')^{g'}\rangle
	= \langle\wt\gamma^0,(\wt\gamma')^{g'-g}\rangle$.
Substitute $g' \mapsto g'+g$.  The summand
is independent of $g$, so summing over $g$
cancels the factor $d$:
	\begin{align*}
		\langle\wt\gamma,\wt\gamma'\rangle
		= C_{\gamma} \ov{C_{\gamma'}}\, 2
		\sum_{g\in G}
t^{g}
		\langle\wt\gamma^0,(\wt\gamma')^{g}\rangle.
	\end{align*}
Multiplying by $\tfrac{\sqrt{-1}}{2}$ yields
	\eqref{eqn:poincare_dual_intersection_form_expansion}.
\end{proof}

\begin{remark}[Intersection lifting convention]
\label{rem:intersection_lifting}
For an intersection pairing
$\langle\omega,\omega'\rangle$, index the lifts
$\wt a_l^{\,g}$ of each intersection point $a_l$
on the corresponding lift $\wt\gamma^{\,g}$ of
the first contour $\gamma$.
\end{remark}
\subsection{Intersection Pairings of Adjacent Classes}
\label{subsection:spanning_intersections}

We evaluate the intersection pairings of the
adjacent classes $\{\omega_1, \dots, \omega_{n-1}\}$.
Applying the reduction formula, we compute these
pairings case by case by enumerating the geometric
intersections of their underlying lifts.

\begin{proposition}[Adjacent class intersections]%
\label{prop:spanning_intersections}
For $1\le i,j\le n-1$, use the convention in
Remark~\ref{rem:intersection_lifting}. Then
	\begin{align}\label{eqn:spanning_intersections}
		\langle\omega_i,\omega_j\rangle =
		\begin{cases}
			\sqrt{-1} \dfrac{1+t^{k_r}}{1-t^{k_r}}
			& i=j,\; D_i \text{ an \artindisk\ in partition } P_r,
			\\[8pt]
			\dfrac{\sqrt{-1}}{2} \left( \dfrac{1+t^{k_r}}{1-t^{k_r}} + \dfrac{1+t^{k_{r+1}}}{1-t^{k_{r+1}}} \right)
			& i=j,\; D_i \text{ a \loopdisk\ from } P_r \text{ to } P_{r+1},
			\\[8pt]
			\dfrac{-\sqrt{-1}}{1 - t^{k_r}}
			& j=i+1, \text{ with shared branch point } b_{i+1} \text{ having exponent } k_r,
			\\[8pt]
			0 & j > i+1.
		\end{cases}
	\end{align}
For $i > j$: $\langle\omega_i,\omega_j\rangle
	= \ov{\langle\omega_j,\omega_i\rangle}$.
\end{proposition}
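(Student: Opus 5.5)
The plan is to evaluate every pairing with the reduction formula (Lemma~\ref{lem:reduction_formula}). For $\omega=\eta(\wt\gamma)$ and $\omega'=\eta(\wt\gamma')$ it gives
\[
\langle\omega,\omega'\rangle=C_\gamma\ov{C_{\gamma'}}\sqrt{-1}\sum_{g\in G}t^g\langle\wt\gamma^{\,0},(\wt\gamma')^{\,g}\rangle ,
\]
so each case reduces to a finite combinatorial count. For each transverse intersection point $a_l$ of the base contours $\gamma$ and $\gamma'$ in $\cc\setminus B'$, we record its local sign, the sheet of $\wt\gamma^{\,0}$ through $a_l$, and the sheet of $\gamma'$'s lift through it. All sheets are measured relative to $\wt z_0^{\,0}$ via the access paths and branch cuts of Remark~\ref{rem:base_point_convention}. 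The difference of sheet labels is the exponent $g$ contributing $\pm t^g$. Sheet labels along a contour are read off by adding $k_{\pt(m)}$ each time the contour winds counterclockwise around $b_m$, that is, each time it crosses the upward cut at $b_m$ in the appropriate direction. This is what $\Phi$ in \eqref{eqn:monodromy_map} prescribes.

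The case $j>i+1$ comes first and is immediate. The supports $D_i$ and $D_j$, whether Artin or adjacent loop disks, share no branch point, and the contours can be chosen in disjoint disks. So all lifts are disjoint and the pairing is $0$.

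The self-intersection cases come next. For an Artin disk, push $\gamma_i$ off itself to get a parallel figure-eight $\gamma_i'$. The two figure-eights meet in two points near $b_i$ and $b_{i+1}$, plus the crossing at the waist, where the lobe through $\ell_i$ changes to $\ov{\ell_{i+1}}$. The sheet shifts at these points are $0$ and $k_r$ (one lobe traverses $\ell_i$, the other $\ov{\ell_{i+1}}$). The signed count should therefore be $\sum_g t^g\langle\wt\gamma^0,\wt\gamma'^g\rangle$ equal to $d(1+t^{k_r})$ up to the normalizing $\sqrt{2/d}$ factors; more precisely $(1-t^{k_r})(1+t^{k_r})$ times a factor compensating the normalization. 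After multiplying by $|C|^2=\frac{2}{d}\frac{1}{|1-t^{k_r}|^2}$ and using $\ov{(1-t^{k})}=1-\ov t^{k}$, this gives $\sqrt{-1}\frac{1+t^{k_r}}{1-t^{k_r}}$.

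For an adjacent loop disk, the Pochhammer contour $[\ell_a,\ell_b]$ and its push-off meet in a fixed number of points, one pair near each of its four arcs around $b_a$ and $b_b$. The sheet labels along $[\ell_a,\ell_b]$ run through $0$, $k_r$, $k_r+k_{r+1}$, $k_{r+1}$. Summing $\pm t^{g}$ over these points gives a polynomial; it should factor as $(1-t^{k_r})(1-t^{k_{r+1}})$ times $\tfrac12\bigl[(1+t^{k_r})(1-t^{k_{r+1}})+(1+t^{k_{r+1}})(1-t^{k_r})\bigr]$. Dividing by $|(1-t^{k_r})(1-t^{k_{r+1}})|^2$ yields the stated symmetric sum. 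This matches the Kita--Yoshida regularized self-intersection (Remark~\ref{rem:normalization}), which I would use as a consistency check.

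For $j=i+1$, the contours share the branch point $b_{i+1}$ and cross transversally only near it. The relevant lobe of $\gamma_i$, namely $\ov{\ell_{i+1}}$ (or the corresponding arc of a Pochhammer contour), meets the lobe of $\gamma_{i+1}$ around $b_{i+1}$ in two points. The two points differ in sheet by $k_r$ and have opposite signs, so the count is $\pm(1-t^{k_r})$ times a monomial in $t$. For Pochhammer contours, the extra factors $(1-t^{k})$ from the other branch point arise because the Pochhammer lift around $b_{i+1}$ carries the coefficient $(1-t^{k'})$ of the far point. These factors cancel against the normalizations $C_\gamma$ and leave exactly $-\sqrt{-1}/(1-t^{k_r})$.

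The four sub-cases (Artin--Artin, Artin--loop, loop--Artin, loop--loop) must each be checked separately. The monomial prefactors $t^{k}$ in \eqref{eqn:artin_atom} and \eqref{eqn:mixed_atom} are what make the answer independent of the sub-case. Finally, the case $i>j$ follows from Hermitian symmetry.

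The main obstacle is the bookkeeping of sheet labels and signs at each intersection, especially for the Pochhammer contours. There, the base point $\zeta_{i,j}$ sits on the $\ov\ell\,\ov\ell$ portion, which shifts the labels of the arcs. I would first draw the contours explicitly with respect to the upward cuts and tabulate, for every intersection point, the sign and the cuts crossed from each base point. Only then would I sum and simplify, checking the result against the requirement that the self-pairings be real multiples of $\sqrt{-1}\cdot$(real), as Hermitian symmetry forces.
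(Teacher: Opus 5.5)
Your method is the paper's: apply Lemma~\ref{lem:reduction_formula}, list the crossings of the base contours, read each sheet difference off the loop through the access paths of Remark~\ref{rem:base_point_convention}, and sum $\epsilon(a)t^{\phi(a)}$ as in Remark~\ref{rem:crossing_sum}. You treat the disjoint case $j>i+1$ exactly as the paper does. Using a push-off for the self-pairings is a legitimate variant: a normal push-off meets the contour only next to its double points and reproduces the paper's self-crossing count.

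The gap is that for this proposition the bookkeeping \emph{is} the proof, and you defer it. The stated values depend on the exact monomial and sign in each raw polynomial. For two adjacent Artin disks, the crossings lie on sheets $0$ (sign $-1$) and $-k_r$ (sign $+1$). This gives $-(1-t^{-k_r})=t^{-k_r}(1-t^{k_r})$ and hence $-\sqrt{-1}/(1-t^{k_r})$. A raw sum $1-t^{k_r}$ would instead give $\sqrt{-1}/(1-t^{-k_r})$, which differs by the factor $t^{k_r}$. These monomials are fixed only by the access-path convention. They cannot come from ``a factor compensating the normalization'', since $C_\gamma\ov{C_\gamma}=|1-t^{k_r}|^{-2}$ contains no monomial. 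Likewise, asserting that the prefactors ``cancel and leave exactly'' the stated value is not an argument.

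Where you do commit to specifics, your predictions are wrong. For a self-pairing, $S:=\sum_{g}t^{g}\langle\wt\gamma^{\,0},\wt\gamma^{\,g}\rangle$ satisfies $\ov S=-S$, because $\langle\wt\gamma^{\,0},\wt\gamma^{\,g}\rangle=\langle\wt\gamma^{\,-g},\wt\gamma^{\,0}\rangle=-\langle\wt\gamma^{\,0},\wt\gamma^{\,-g}\rangle$. In particular the $t^{0}$ coefficient vanishes.

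\begin{itemize}
\item Figure-eight. Crossings with sheet shifts only $0$ and $k_r$ cannot produce such an $S$. The correct data is the single waist double point, where the two passes of $\wt\gamma_i^{\,0}$ differ by $k_r$ sheets. This gives $S=t^{k_r}-t^{-k_r}=(1-t^{-k_r})(1+t^{k_r})$, which is your $(1-t^{k_r})(1+t^{k_r})$ multiplied by $-t^{-k_r}$.
\item Pochhammer contour. Your factorization equals $(1-t^{k_r})(1-t^{k_{r+1}})(1-t^{k_r+k_{r+1}})$, which violates $\ov S=-S$. Dividing it by $|(1-t^{k_r})(1-t^{k_{r+1}})|^2$ gives $t^{k_r+k_{r+1}}$ times the stated value, which is not even real. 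The correct polynomial comes from the three double points $a_1,a_2,a_3$, with shifts $\pm(k_r+k_{r+1})$, $\pm k_{r+1}$, $\pm k_r$. It equals $-(1-t^{k_r})(1-t^{k_{r+1}})(1-t^{-k_r-k_{r+1}})$, i.e.\ $t^{-k_r-k_{r+1}}$ times yours.
\item Adjacent case $j=i+1$. You need the actual crossing data: $2,4,4,8$ crossings in the four configurations. The raw sums are $-(1-t^{-k_r})$, $(1-t^{-k_r})(1-t^{k_{r+1}})$, $(1-t^{-k_{r-1}})(1-t^{-k_r})$ and $-(1-t^{-k_{r-1}})(1-t^{-k_r})(1-t^{k_{r+1}})$. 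Only with these in hand can one see that the prefactors in \eqref{eqn:artin_atom} and \eqref{eqn:mixed_atom} yield the uniform value $-\sqrt{-1}/(1-t^{k_r})$.
\end{itemize}
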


\begin{proof}
By Lemma~\ref{lem:reduction_formula}, the pairing is
$\langle\omega,\omega'\rangle =C_{\gamma} \ov{C_{\gamma'}}
	\sqrt{-1}
	\sum_{g\in G} t^g
	\langle\wt\gamma^{\,0},(\wt\gamma')^{\,g}\rangle$.
Let $\zeta$ and $\zeta'$ be the base points of $\gamma$
and $\gamma'$. To evaluate the geometric intersections
at a point $a \in \gamma \cap \gamma'$, construct a
closed loop based at $z_0$. Traverse the base path
from $z_0$ to $\zeta$, follow $\gamma$ to $a$, trace
$\gamma'$ from $a$ to $\zeta'$, and return to $z_0$
along the reversed base path of $\gamma'$. The total
monodromy of this loop identifies the exact lift
$(\wt\gamma')^{\,g}$ that intersects the base lift
$\wt\gamma^{\,0}$ at the lift of $a$. We compute these
intersections case by case.

\subsection*{Case 1: Self-intersections ($i=j$)}
The self-intersection of an adjacent class depends
on whether its support is an \artindisk\ (Subcase 1a)
or a \loopdisk\ (Subcase 1b).

\subsubsection*{Subcase 1a: Self-intersection of classes from \artindisks}
\leavevmode\par
\noindent
\begin{minipage}[c]{0.68\textwidth}
Suppose $D_i$ is an \artindisk\ in $P_r$.
The curve $\gamma_i$ has one self-intersection
at $a$.
Its base lift $\wt\gamma_i^{\,0}$ traverses two
preimages of $a$, intersecting $\wt\gamma_i^{\,k_r}$
at $\wt a^{\,k_r}$ (sign $+1$) and $\wt\gamma_i^{\,-k_r}$
at $\wt a^{\,0}$ (sign $-1$).
The weighted sum of these intersections yields the
raw polynomial $(t^{k_r} - t^{-k_r})$. 
	\end{minipage}\hfill
\begin{minipage}[c]{0.28\textwidth}
	\centering
	\includegraphics[width=\linewidth]{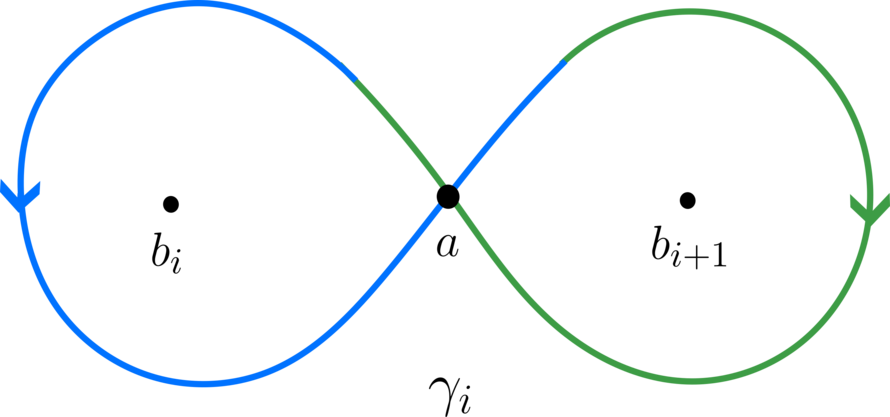}
\end{minipage}
This expression
factorizes as $(1-t^{-k_r})(1+t^{k_r})$. By the
reduction formula
\eqref{eqn:poincare_dual_intersection_form_expansion},
the cohomology pairing evaluates to:
\begin{align*}
	\langle\omega_i,\omega_i\rangle
	&= \sqrt{-1} \frac{(1-t^{-k_r})(1+t^{k_r})}{(1-t^{-k_r})(1-t^{k_r})}
	= \sqrt{-1} \frac{1 + t^{k_r}}{1 - t^{k_r}}.
\end{align*}

\vspace{1em}

\subsubsection*{Subcase 1b: Self-intersection of classes from \loopdisks}
Suppose $D_i$ is a \loopdisk\ connecting partition $P_r$ to $P_{r+1}$. 
The curve $\gamma_i$ self-intersects at $a_1$, $a_2$,
and $a_3$. 
\leavevmode\par
\noindent
\begin{minipage}[c]{0.68\textwidth}
	Tracking accumulated monodromy yields the
	intersections of the base lift $\wt\gamma_i^{\,0}$. At
	$a_1$, it meets $\wt\gamma_i^{\,-k_r-k_{r+1}}$ at $\wt a_1^{\,0}$ ($+1$)
	and $\wt\gamma_i^{\,k_r+k_{r+1}}$ at $\wt a_1^{\,k_r+k_{r+1}}$ ($-1$).
	At $a_2$,  it meets
	$\wt\gamma_i^{\,-k_{r+1}}$ at $\wt a_2^{\,0}$ ($-1$)
	and $\wt\gamma_i^{\,k_{r+1}}$ at
	$\wt a_2^{\,k_{r+1}}$ ($+1$).
	At $a_3$, it meets $\wt\gamma_i^{\,k_r}$ at $\wt a_3^{\,0}$
	($+1$) and $\wt\gamma_i^{\,-k_r}$ at $\wt a_3^{\,-k_r}$ ($-1$).
	\end{minipage}\hfill
\begin{minipage}[c]{0.28\textwidth}
	\centering
	\includegraphics[width=\linewidth]{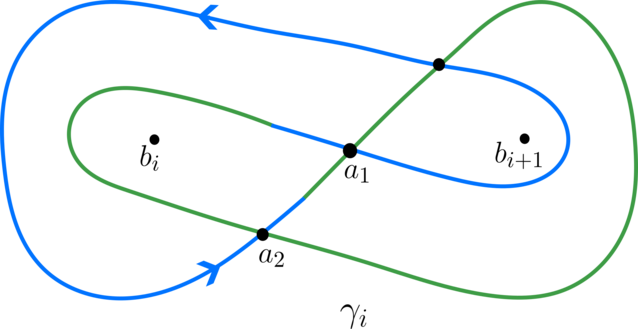}
\end{minipage}
The weighted sum of these intersections yields the raw
polynomial $(t^{-k_r-k_{r+1}} - t^{k_r+k_{r+1}}- t^{-k_{r+1}} + t^{k_{r+1}} 
 + t^{k_r} - t^{-k_r})$. This expression
factorizes as $-(1 - t^{k_r})(1 - t^{k_{r+1}})(1 - t^{-k_r-k_{r+1}})$.
By the reduction formula \eqref{eqn:poincare_dual_intersection_form_expansion},
the cohomology pairing evaluates to:
\begin{align*}
	\langle\omega_i,\omega_i\rangle
	&= \sqrt{-1} \frac{-(1 - t^{k_r})(1 - t^{k_{r+1}})(1 - t^{-k_r-k_{r+1}})}{(1-t^{k_r})(1-t^{-k_r})(1-t^{k_{r+1}})(1-t^{-k_{r+1}})}
	= \frac{\sqrt{-1}}{2} \left( \frac{1+t^{k_r}}{1-t^{k_r}} + \frac{1+t^{k_{r+1}}}{1-t^{k_{r+1}}} \right).
\end{align*}

	Observe that both cases yield strictly real values
	for the Hermitian self-intersections.

\subsection*{Case 2: Adjacent disks ($j=i+1$)}

Adjacent disks $D_i$ and $D_{i+1}$ can be either
\artindisks\ or \loopdisks. Four configurations
arise. When both are \artindisks\ (Subcase 2a),
when an \artindisk\ precedes a \loopdisk\ (Subcase 2b),
when a \loopdisk\ precedes an \artindisk\ (Subcase 2c),
and when both are \loopdisks\ (Subcase 2d).

\subsubsection*{Subcase 2a: Adjacent \artindisks\ ($D_i, D_{i+1}$ are \artindisks)}
Because $D_i$ and $D_{i+1}$ are adjacent \artindisks\ within
the same partition block $P_r$, their relevant local monodromy
exponent is $k_r$.

The curves $\gamma_i$ and $\gamma_{i+1}$ intersect twice.
Let $a_1$ be the first intersection encountered along
$\gamma_i$ from $\zeta_i$, and $a_2$ the second.
Due to our base point convention
(Remark~\ref{rem:base_point_convention}) and studying the path
$z_0 \to \zeta_i \to a_l \to \zeta_{i+1} \to z_0$, we evaluate
each intersection. 
\leavevmode\par
\noindent
\begin{minipage}[c]{0.56\textwidth}
At the first intersection $a_1$, the acquired
monodromy is $0$. Thus, $\wt\gamma_i^{\,0}$
intersects $\wt\gamma_{i+1}^{\,0}$ at
$\wt a_1^{\,0}$ with sign $-1$. At the second
intersection $a_2$, the acquired monodromy is $-k_r$.
Thus, $\wt\gamma_i^{\,0}$ intersects
$\wt\gamma_{i+1}^{\,-k_r}$ at $\wt a_2^{\,0}$ with sign
$+1$.
	\end{minipage}\hfill
\begin{minipage}[c]{0.40\textwidth}
	\centering
	\includegraphics[width=\linewidth]{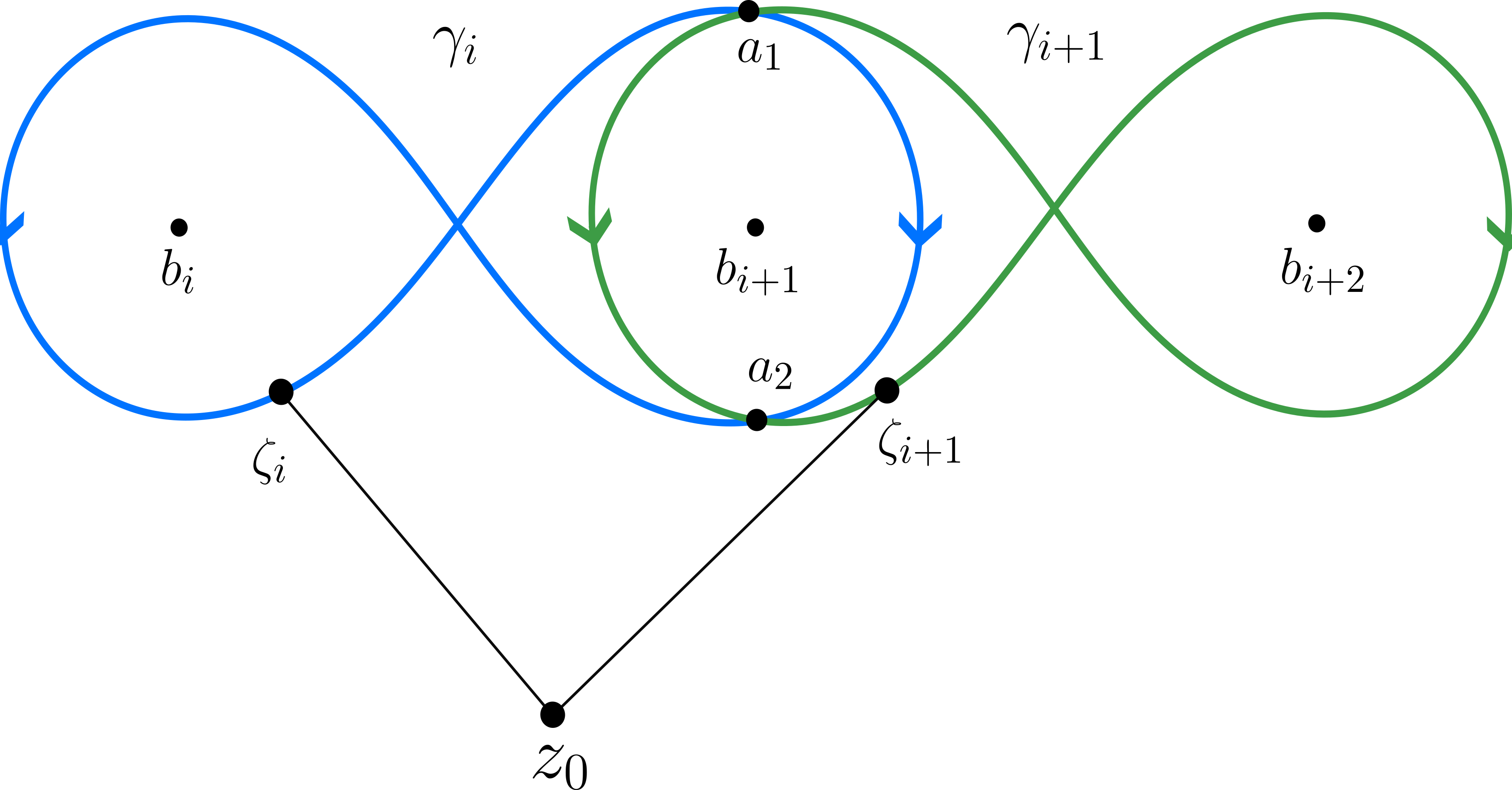}
\end{minipage}

The weighted sum of these intersections yields the
raw polynomial $-(1-t^{-k_r})$. By the reduction
formula \eqref{eqn:poincare_dual_intersection_form_expansion},
the cohomology pairing evaluates to:
\begin{align*}
	\langle \omega_i, \omega_{i+1} \rangle
	&= \sqrt{-1} \frac{-t^{k_r}t^{-k_r}(1-t^{-k_r})}{(1-t^{k_r})(1-t^{-k_r})}
	= \frac{-\sqrt{-1}}{1 - t^{k_r}}.
\end{align*}

\vspace{1em}

\subsubsection*{Subcase 2b: Adjacent Artin and loop disk}
The \artindisk\ $D_i$ belongs to the partition block $P_r$,
while the \loopdisk\ $D_{i+1}$ connects $P_r$ and $P_{r+1}$.
Consequently, the relevant local monodromy exponents are
$k_r$ (for the shared branch point $b_{i+1}$) and $k_{r+1}$
(for the subsequent branch point $b_{i+2}$).
\leavevmode\par
\noindent
\begin{minipage}[c]{0.56\textwidth}
The curves $\gamma_i$ and $\gamma_{i+1}$ intersect
four times near the shared branch point $b_{i+1}$.
Accounting for monodromies, $\wt\gamma_i^{\,0}$
intersects $\wt\gamma_{i+1}^{\,0}$ at $\wt a_1^{\,0}$
	(sign $+1$), $\wt\gamma_{i+1}^{\,k_{r+1}}$ at
$\wt a_2^{\,0}$ (sign $-1$), $\wt\gamma_{i+1}^{\,k_{r+1}-k_r}$
at $\wt a_3^{\,0}$ (sign $+1$), and $\wt\gamma_{i+1}^{\,-k_r}$
at $\wt a_4^{\,0}$ (sign $-1$).
	\end{minipage}\hfill
\begin{minipage}[c]{0.40\textwidth}
	\centering
	\includegraphics[width=\linewidth]{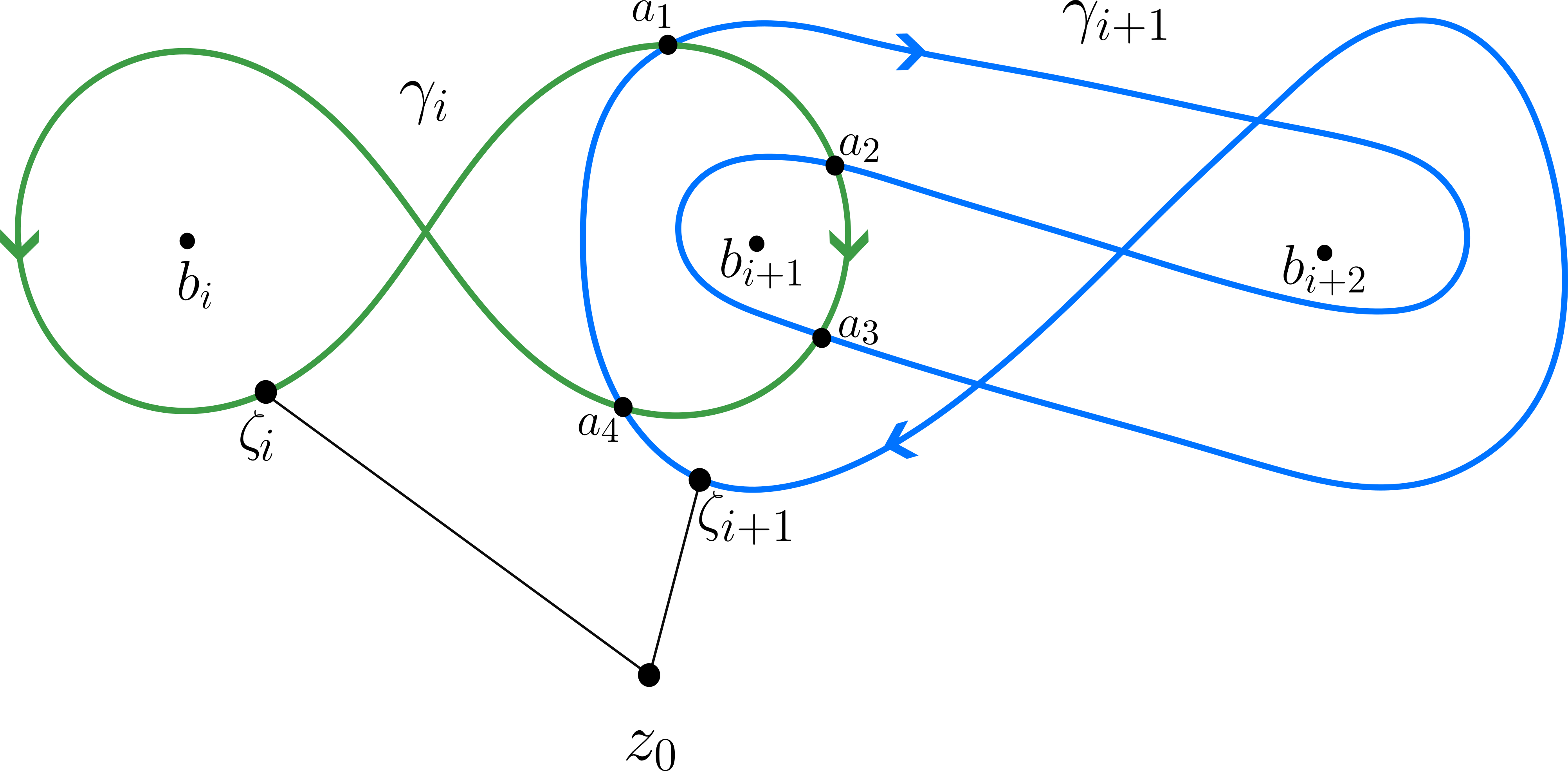}
\end{minipage}
The weighted sum of these intersections yields the raw
polynomial $(1 - t^{k_{r+1}} + t^{k_{r+1}-k_r} - t^{-k_r})$.
This expression factorizes as $(1-t^{-k_r})(1 - t^{k_{r+1}})$.
By the reduction formula
\eqref{eqn:poincare_dual_intersection_form_expansion},
the cohomology pairing evaluates to:
\begin{align*}
	\langle \omega_i, \omega_{i+1} \rangle
	&= \sqrt{-1} \frac{t^{k_r}t^{-k_r-k_{r+1}}(1-t^{-k_r})(1 - t^{k_{r+1}})}{(1-t^{k_r})(1-t^{-k_r})(1-t^{-k_{r+1}})}
	= \frac{-\sqrt{-1}}{1 - t^{k_r}}.
\end{align*}

\subsubsection*{Subcase 2c: Adjacent loop and Artin disk}
Suppose the \loopdisk\ $D_i$ connects $P_{r-1}$
to $P_r$, while the \artindisk\ $D_{i+1}$
belongs to $P_r$. The local monodromy exponents
are $k_{r-1}$ at $b_i$ and $k_r$ at the shared
point $b_{i+1}$.
\leavevmode\par
\noindent
\begin{minipage}[c]{0.56\textwidth}
The curves $\gamma_i$ and $\gamma_{i+1}$ intersect
four times near the shared branch point $b_{i+1}$.
Accounting for monodromies, $\wt\gamma_i^{\,0}$ 
intersects $\wt\gamma_{i+1}^{\,-k_{r-1}}$
at $\wt a_1^{\,0}$ (sign $-1$),
$\wt\gamma_{i+1}^{\,-k_{r-1}-k_r}$ at
$\wt a_2^{\,0}$ (sign $+1$), $\wt\gamma_{i+1}^{\,-k_{r}}$
at $\wt a_3^{\,0}$ (sign $-1$), and $\wt\gamma_{i+1}^{\,0}$
at $\wt a_4^{\,0}$ (sign $+1$).
	\end{minipage}\hfill
\begin{minipage}[c]{0.40\textwidth}
	\centering
	\includegraphics[width=\linewidth]{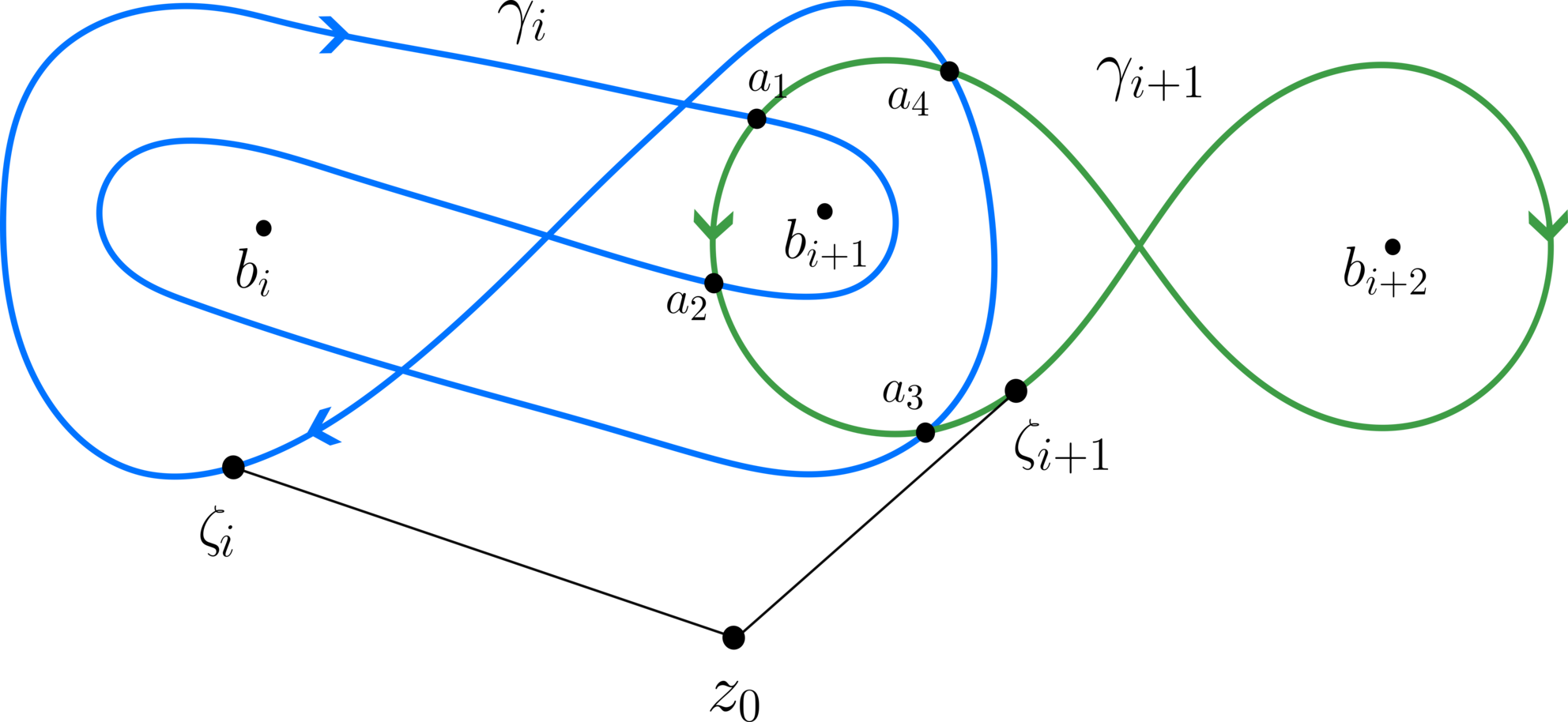}
\end{minipage}

The weighted sum of these intersections yields the raw
polynomial $(1 - t^{-k_{r-1}} - t^{-k_r} + t^{-k_{r-1}-k_r})$.
This expression factorizes as $(1 - t^{-k_{r-1}})(1 - t^{-k_r})$.
By the reduction formula
\eqref{eqn:poincare_dual_intersection_form_expansion},
the cohomology pairing evaluates to:
\begin{align*}
	\langle \omega_i, \omega_{i+1} \rangle
	&= \sqrt{-1} \frac{t^{k_{r-1}+k_r}t^{-k_r}(1 - t^{-k_{r-1}})(1 - t^{-k_r})}{(1-t^{k_{r-1}})(1-t^{k_r})(1-t^{-k_r})}
	= \frac{-\sqrt{-1}}{1 - t^{k_r}}.
\end{align*}

\vspace{1em}

\subsubsection*{Subcase 2d: Adjacent loop disks}
This situation arises when the partition block $P_r$
consists of a single branch point, making $D_i$ and
$D_{i+1}$ both \loopdisks. The disk $D_i$
connects $P_{r-1}$ to $P_r$, while $D_{i+1}$
connects $P_r$ to $P_{r+1}$. The local
monodromy exponents are $k_{r-1}$ at $b_i$,
$k_r$ at $b_{i+1}$, and $k_{r+1}$ at
$b_{i+2}$.

The curves $\gamma_i$ and $\gamma_{i+1}$
intersect eight times near the shared point
$b_{i+1}$.
\leavevmode\par
\noindent
\begin{minipage}[c]{0.56\textwidth}
Accounting for monodromies, 
$\wt\gamma_i^{\,0}$ intersects $\wt\gamma_{i+1}^{\,-k_{r-1}}$	
at $\wt a_1^{\,0}$ (sign $+1$), $\wt\gamma_{i+1}^{\,-k_{r-1}+k_{r+1}}$ 
at $\wt a_2^{\,0}$ (sign $-1$),
$\wt\gamma_{i+1}^{\,-k_{r-1}+k_{r+1}-k_r}$
at $\wt a_3^{\,0}$ (sign $+1$), $\wt\gamma_{i+1}^{\,-k_{r-1}-k_r}$
at $\wt a_4^{\,0}$ (sign $-1$), $\wt\gamma_{i+1}^{\,-k_r}$
at $\wt a_5^{\,0}$ (sign $+1$), $\wt\gamma_{i+1}^{\,-k_r+k_{r+1}}$
at $\wt a_6^{\,0}$ (sign $-1$), $\wt\gamma_{i+1}^{\,k_{r+1}}$
at $\wt a_7^{\,0}$ (sign $+1$), and $\wt\gamma_{i+1}^{\,0}$
at $\wt a_8^{\,0}$ (sign $-1$).
	\end{minipage}\hfill
\begin{minipage}[c]{0.40\textwidth}
	\centering
	\includegraphics[width=\linewidth]{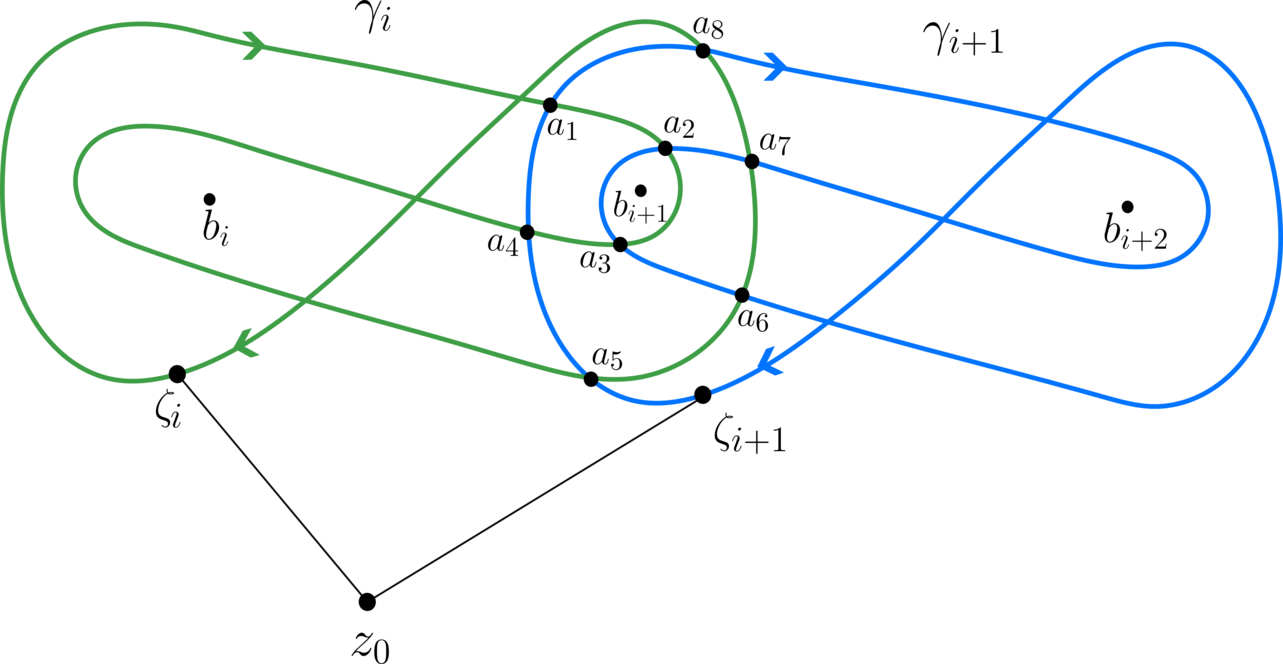}
\end{minipage}

The weighted intersection sum is
$
t^{-k_{r-1}}-t^{k_{r+1}-k_{r-1}}
	+t^{-k_{r-1}+k_{r+1}-k_r}
	-t^{-k_{r-1}-k_r}
	+t^{-k_r}-t^{-k_r+k_{r+1}}
	+t^{k_{r+1}}-1.
$
It factorizes as
$
	-(1-t^{-k_{r-1}})
	(1-t^{k_{r+1}})(1-t^{-k_r}).
$
By the reduction formula
\eqref{eqn:poincare_dual_intersection_form_expansion},
the cohomology pairing evaluates to:
\begin{align*}
	\langle \omega_i, \omega_{i+1} \rangle
	&= \sqrt{-1} \frac{t^{k_{r-1}+k_r}t^{-k_r-k_{r+1}}[-(1 - t^{-k_{r-1}})(1 - t^{k_{r+1}})(1 - t^{-k_r})]}{(1-t^{k_{r-1}})(1-t^{k_r})(1-t^{-k_r})(1-t^{-k_{r+1}})}
	= \frac{-\sqrt{-1}}{1 - t^{k_r}}.
\end{align*}

\smallskip
\noindent\textbf{Disjoint disks.} When $|i-j| \geq 2$, the
disks $D_i$ and $D_j$ are disjoint, so the lifted curves
have no intersection points and
$\langle \omega_i, \omega_j \rangle = 0$.
\end{proof}

\begin{remark}\label{rem:crossing_sum}
Let $\gamma$ and $\gamma'$ be as in Lemma~\ref{lem:reduction_formula}, with
base points $\zeta$ and $\zeta'$. For $a\in\gamma\cap\gamma'$ let
$\epsilon(a)=\pm1$ be the local intersection sign at $a$, and let $\ell_a$ be
the loop at $z_0$ used in the proof of
Proposition~\ref{prop:spanning_intersections}: the access path from $z_0$ to
$\zeta$, then $\gamma$ to $a$, then $\gamma'$ from $a$ to $\zeta'$, then the
reversed access path of $\gamma'$. Put $\phi(a)=\Phi_\zz([\ell_a])\in\zz$.
That proof shows that $\wt\gamma^{\,0}$ meets $(\wt\gamma')^{\,g}$ exactly at
the lifts of those $a$ with $\phi(a)\equiv g\pmod d$, and there with sign
$\epsilon(a)$. Since $t^d=1$, we have $t^{\,g}=t^{\phi(a)}$, so such an $a$
contributes $\epsilon(a)\,t^{\phi(a)}$. Hence
	\begin{equation}\label{eqn:crossing_sum}
		\sum_{g\in G}t^{\,g}
		\bigl\langle\wt\gamma^{\,0},(\wt\gamma')^{\,g}\bigr\rangle
		=\sum_{a\in\gamma\cap\gamma'}\epsilon(a)\,t^{\phi(a)}
		\in\zz[t^{\pm1}].
	\end{equation}
Neither $\epsilon(a)$ nor $\phi(a)$ involves $d$. The right-hand side
therefore depends only on $\gamma$, $\gamma'$ and the exponents $k_r$.
\end{remark}

\subsection{Pairings with General Loop Classes}
\label{subsection:general_loop_pairings}

For $j > i+1$, the class $\omega_{i,j}$ arises from a
non-adjacent \loopdisk\ that connects the last branch point
of $P_i$ to the first branch point of $P_j$. A general
\loopdisk\ with support $D_{i,j}$ intersects exactly four
other disks. These correspond to adjacent classes
$\omega_{h_i-1}$, $\omega_{h_i}$, $\omega_{h_{j-1}}$,
and $\omega_{h_{j-1}+1}$. The generators $\omega_{h_i-1}$
and $\omega_{h_{j-1}+1}$ are either Artin or loop type,
while the generators $\omega_{h_i}$ and $\omega_{h_{j-1}}$
are strictly loop type.

\begin{proposition}[General loop class pairings]%
\label{prop:general_mixed_intersections}
For $j>i+1$, use the convention in
Remark~\ref{rem:intersection_lifting}. The
self-pairing of $\omega_{i,j}$ is
	\begin{align}\label{eqn:mixed_self_intersection_general}
		\langle\omega_{i,j},\omega_{i,j}\rangle =
		\frac{\sqrt{-1}}{2} \left( \frac{1+t^{k_i}}{1-t^{k_i}} + \frac{1+t^{k_j}}{1-t^{k_j}} \right).
	\end{align}
For $\omega_r \neq \omega_{i,j}$ (non-self pairings), the
intersections with the adjacent classes
evaluate to:
	\begin{align}\label{eqn:mixed_intersections_general}
		\langle\omega_r,\omega_{i,j}\rangle =
		\begin{cases}
			\dfrac{-\sqrt{-1}}{1 - t^{k_i}}
			& r = h_i-1,
			\\[8pt]
			\dfrac{-\sqrt{-1}}{1 - t^{-k_i}}
			& r = h_i,
			\\[8pt]
			\dfrac{\sqrt{-1}}{1 - t^{k_j}}
			& r = h_{j-1},
			\\[8pt]
			\dfrac{\sqrt{-1}}{1 - t^{-k_j}}
			& r = h_{j-1}+1,
			\\[8pt]
			0 & \text{otherwise.}
		\end{cases}
	\end{align}
The reversed pairings
$\langle\omega_{i,j},\omega_r\rangle$ are determined
by Hermitian symmetry.
\end{proposition}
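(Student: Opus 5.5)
The plan is to follow the template of Proposition~\ref{prop:spanning_intersections}: apply Lemma~\ref{lem:reduction_formula} to $\gamma_{i,j}$ and each $\gamma_r$. Each pairing is then determined by the crossing sum \eqref{eqn:crossing_sum} of Remark~\ref{rem:crossing_sum}, together with the normalization constants from \eqref{eqn:artin_atom}--\eqref{eqn:mixed_atom}.

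The self-pairing should need no new computation. The disk $D_{i,j}=\beta_{i,j}(D_{h_i})$ contains exactly two branch points, with exponents $k_i$ and $k_j$. The Pochhammer contour $\gamma_{i,j}$ is the image under $\beta_{i,j}$ of a standard Pochhammer contour around two points. Its self-crossings, intersection signs and accumulated monodromies $\phi(a)$ are computed inside $D_{i,j}$, using only the loops $\ell_{h_i}$ and $\ell_{h_{j-1}+1}$. They therefore coincide with those of Subcase~1b, with $(k_r,k_{r+1})$ replaced by $(k_i,k_j)$. Concretely, I would check that the base point $\zeta_{i,j}$ lies on $r_{h_i}$, so that the access path contributes nothing to the differences $\phi(a)$. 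For a self-pairing the access path occurs twice with opposite orientation, so it cancels in any case. The raw polynomial and the factorization then carry over verbatim, which gives \eqref{eqn:mixed_self_intersection_general}.

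For the off-diagonal pairings I would first settle the ``otherwise'' case. Since $D_{i,j}$ passes below the intermediate branch points, it can be isotoped to meet only the disks $D_{h_i-1}$, $D_{h_i}$, $D_{h_{j-1}}$ and $D_{h_{j-1}+1}$. The contours are disjoint from all other adjacent contours, so those pairings vanish. Each of the four remaining cases is then a local crossing count near one shared branch point: $b_{h_i}$ for $r=h_i-1,h_i$, and $b_{h_{j-1}+1}$ for $r=h_{j-1},h_{j-1}+1$.

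\begin{itemize}
\item For $r=h_i-1$ with $D_r$ an Artin disk, the local picture near $b_{h_i}$ agrees with Subcase~2b. With $D_r$ a loop disk it agrees with Subcase~2d. In both cases I expect $-\sqrt{-1}/(1-t^{k_i})$.
\item For $r=h_{j-1}+1$, the picture near $b_{h_{j-1}+1}$ resembles Subcases~2c and~2d. The difference is that $\gamma_{i,j}$ now approaches from below the intermediate points rather than being a straight neighbour. This changes which lift is met, giving a conjugated denominator $1-t^{-k_j}$, and reverses the sign.
\item For $r=h_i$ and $r=h_{j-1}$, the curve $\gamma_r$ is itself a Pochhammer contour sharing one endpoint with $\gamma_{i,j}$. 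The two disks overlap in a region around the shared point, but the two contours leave in different directions: one to a neighbour on the real axis, the other dipping below. I would draw the eight crossings as in Subcase~2d, compute $\epsilon(a)$ and $\phi(a)$, factor the resulting raw polynomial into three factors of the form $(1-t^{\pm k})$, and cancel against $C_{\gamma}\ov{C_{\gamma'}}$.
\end{itemize}

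The main obstacle is the bookkeeping of $\phi(a)$ for these crossings. The loop $\ell_a$ now runs along the access paths $r_{h_i}$ and $r_{h_{j-1}+1}$, which may cross the branch cuts of intermediate points in a different order than in the adjacent case. In particular, I must verify that the intermediate branch points contribute no net winding to $\ell_a$. This holds because $D_{i,j}$ passes below them and the access paths come from $z_0$ far below. Once that is confirmed, only $k_i$ and $k_j$ appear, and each computation reduces to a relabelled adjacent one up to the orientation changes producing the signs and conjugations listed in \eqref{eqn:mixed_intersections_general}. The reversed pairings follow by Hermitian symmetry.
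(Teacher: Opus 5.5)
Your route is the paper's: Lemma~\ref{lem:reduction_formula} with the crossing sum, the self-pairing taken from Subcase~1b, $r=h_i-1$ from Subcases~2b/2d, and new eight-crossing enumerations for $r=h_i$ and $r=h_{j-1}$. The paper also enumerates Case~7 directly. All your predicted values are correct.

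The reason you give for $r=h_{j-1}+1$ is wrong, however, and it contradicts your own final paragraph. The dip of $\gamma_{i,j}$ below the intermediate points changes nothing. As you argue later, $\ell_a$ does not wind around those points, so near $b_{h_{j-1}+1}$ the pairs $(\epsilon(a),\phi(a))$ are exactly those of Subcases~2c/2d with $k_{r-1}$ replaced by $k_i$.

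The sign flip and the conjugated denominator come from the \emph{order of the arguments}. In 2c/2d the loop class ending at the shared point sits in the first slot; in $\langle\omega_r,\omega_{i,j}\rangle$ it sits in the second. Exchanging the two curves sends $\epsilon(a)\mapsto-\epsilon(a)$ and $\phi(a)\mapsto-\phi(a)$, because the sub-arcs used differ by a whole contour, on which $\Phi_{\zz}$ vanishes. Hence the crossing sum is replaced by minus its complex conjugate. The paper's Case~7 polynomials $-(1-t^{k_i})(1-t^{k_j})$ and $(1-t^{k_i})(1-t^{k_j})(1-t^{-k_{j+1}})$ are exactly these transforms of the relabelled 2c and 2d polynomials. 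The pairing is therefore $\overline{-\sqrt{-1}/(1-t^{k_j})}=\sqrt{-1}/(1-t^{-k_j})$. This case is Hermitian symmetry applied to the adjacent computation, not a new lifting effect.

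Likewise, your claim that ``only $k_i$ and $k_j$ appear'' is false for the raw crossing sums. The loop $\ell_a$ also runs along $\gamma_r$, so it winds around the second branch point of $D_r$. For $r=h_i$ and $r=h_{j-1}$ that point ($b_{h_i+1}$, resp.\ $b_{h_{j-1}}$) is itself an intermediate point. Accordingly, the paper's Case~5 and Case~6 polynomials contain the factors $1-t^{k_{i+1}}$ and $1-t^{k_{j-1}}$. These disappear only after multiplying by $C_{\gamma_r}\overline{C_{\gamma_{i,j}}}$.

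So your no-winding argument holds only for intermediate points outside $D_r$. Your bullet about factoring into three terms and cancelling against the normalizations already anticipates this. This is a misstatement of the bookkeeping rather than a missing idea.
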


\begin{proof}
By the reduction formula
	\eqref{eqn:poincare_dual_intersection_form_expansion},
the pairing $\langle\omega_r,\omega_{i,j}\rangle$
is a weighted sum over lifted intersection points.
We evaluate each configuration separately.

\subsection*{Case 3: Self-intersection}
The self-pairing matches the geometry of the adjacent loop
classes from Subcase~1b of
Proposition~\ref{prop:spanning_intersections}.
By the reduction formula
\eqref{eqn:poincare_dual_intersection_form_expansion},
the cohomology pairing evaluates to:
\begin{align*}
	\langle \omega_{i,j}, \omega_{i,j} \rangle 
	&= \frac{\sqrt{-1}}{2} \left( \frac{1+t^{k_i}}{1-t^{k_i}} + \frac{1+t^{k_j}}{1-t^{k_j}} \right).
\end{align*}

\subsection*{Case 4: Intersection with $\omega_r$ ($r=h_i-1$)}

For $r=h_i-1$, the class $\omega_r$ is either
an Artin class or a loop class.
\leavevmode\par
\noindent
\begin{minipage}[c]{0.56\textwidth}
	\textbf{Subcase 4a: $\omega_r$ is an Artin class.}
The curves intersect four times near their shared branch point
$b_{r+1}$. The lifting geometry matches the
arrangement in Subcase~2b
of Proposition~\ref{prop:spanning_intersections}.
The cohomology pairing evaluates to:
	\begin{align*}
		\langle \omega_r, \omega_{i,j} \rangle
		&= \frac{-\sqrt{-1}}{1 - t^{k_i}}.
	\end{align*}
	\end{minipage}\hfill
\begin{minipage}[c]{0.40\textwidth}
	\centering
	\includegraphics[width=\linewidth]{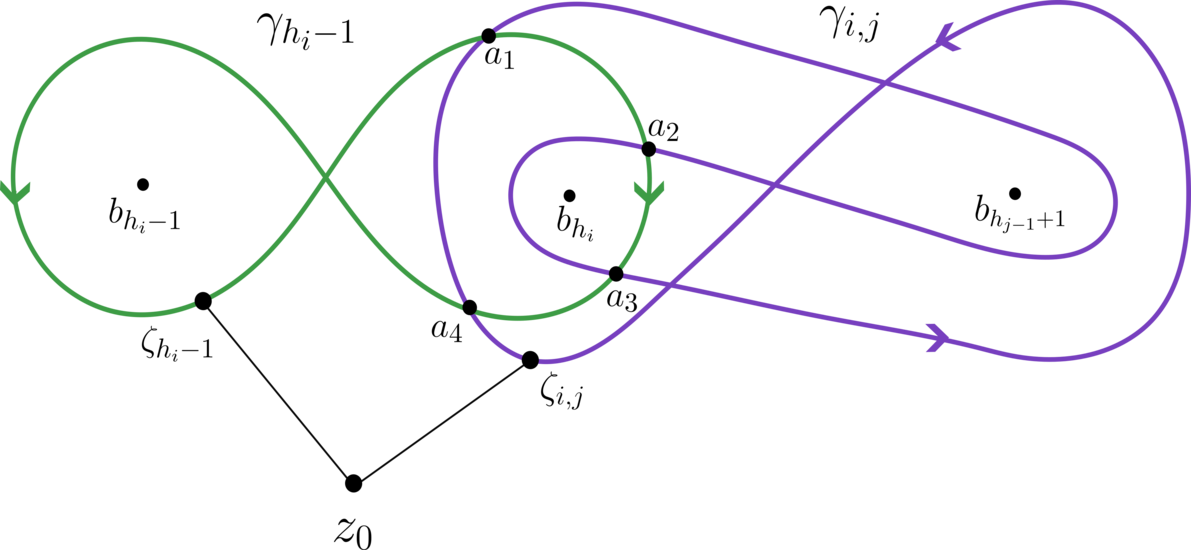}
\end{minipage}
\leavevmode\par
\noindent
\begin{minipage}[c]{0.56\textwidth}
	\textbf{Subcase 4b: $\omega_r$ is a loop class.}
The curves intersect eight times near their shared branch point
$b_{r+1}$. The lifting geometry matches the
arrangement in Subcase~2d
of Proposition~\ref{prop:spanning_intersections}.
The cohomology pairing evaluates to:
	\begin{align*}
		\langle \omega_r, \omega_{i,j} \rangle
		&= \frac{-\sqrt{-1}}{1 - t^{k_i}}.
	\end{align*}
	\end{minipage}\hfill
\begin{minipage}[c]{0.40\textwidth}
	\centering
	\includegraphics[width=\linewidth]{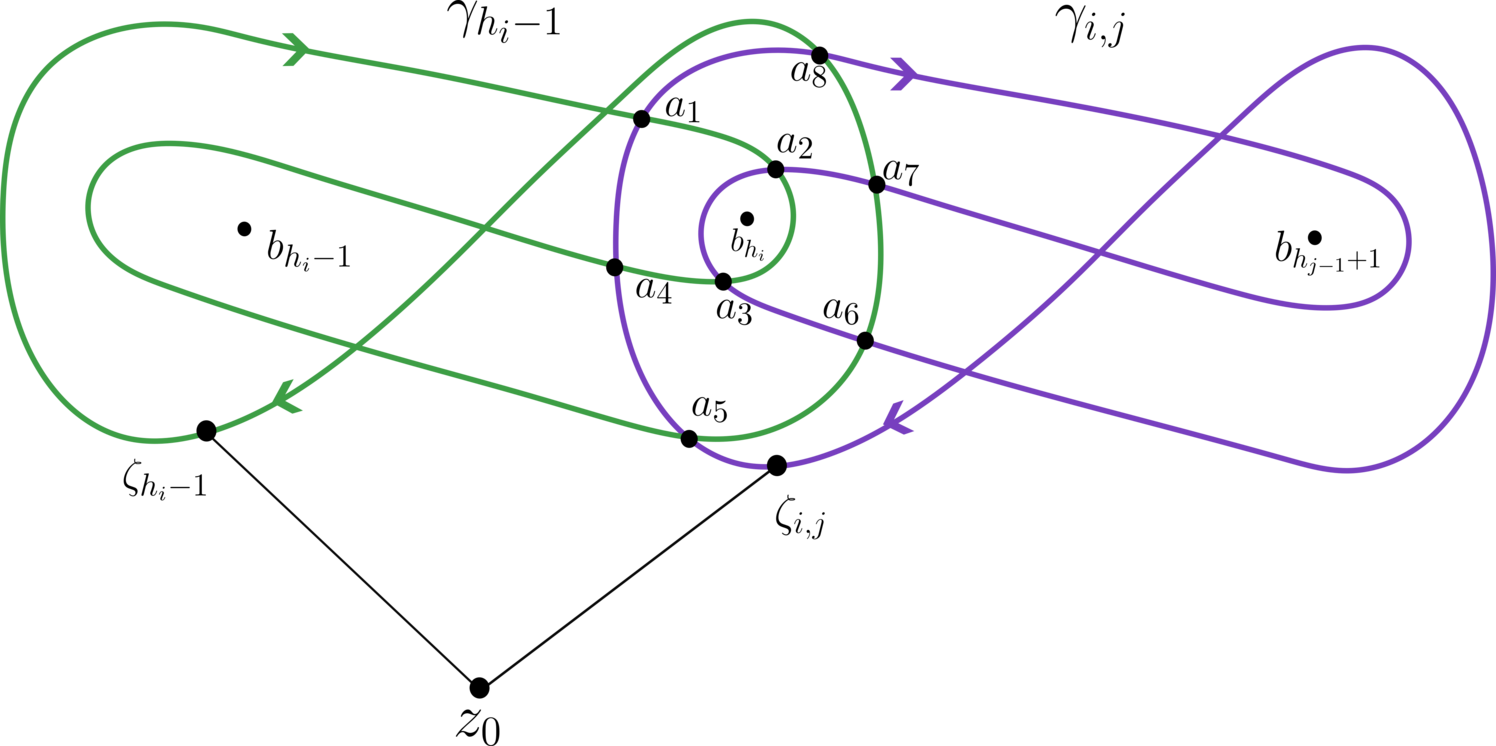}
\end{minipage}

\subsection*{Case 5: Intersection with $\omega_r$ ($r=h_i$)}
The curves $\gamma_r$ and $\gamma_{i,j}$
intersect eight times near their shared branch point
$b_r$.
\leavevmode\par
\noindent
\begin{minipage}[c]{0.56\textwidth}
Accounting for monodromies, $\wt\gamma_r^{\,0}$ intersects 
$\wt\gamma_{i,j}^{\,-k_{i+1}}$ at $\wt a_1^{\,0}$ (sign $+1$), 
$\wt\gamma_{i,j}^{\,-k_{i+1}+k_j}$ at $\wt a_2^{\,0}$ (sign $-1$), 
$\wt\gamma_{i,j}^{\,-k_{i+1}+k_j+k_i}$ at $\wt a_3^{\,0}$ (sign $+1$), 
$\wt\gamma_{i,j}^{\,-k_{i+1}+k_i}$ at $\wt a_4^{\,0}$ (sign $-1$), 
$\wt\gamma_{i,j}^{\,k_i}$ at $\wt a_5^{\,0}$ (sign $+1$), 
$\wt\gamma_{i,j}^{\,k_j+k_i}$ at $\wt a_6^{\,0}$ (sign $-1$), 
$\wt\gamma_{i,j}^{\,k_j}$ at $\wt a_7^{\,0}$ (sign $+1$), and 
$\wt\gamma_{i,j}^{\,0}$ at $\wt a_8^{\,0}$ (sign $-1$).
	\end{minipage}\hfill
\begin{minipage}[c]{0.40\textwidth}
	\centering
	\includegraphics[width=\linewidth]{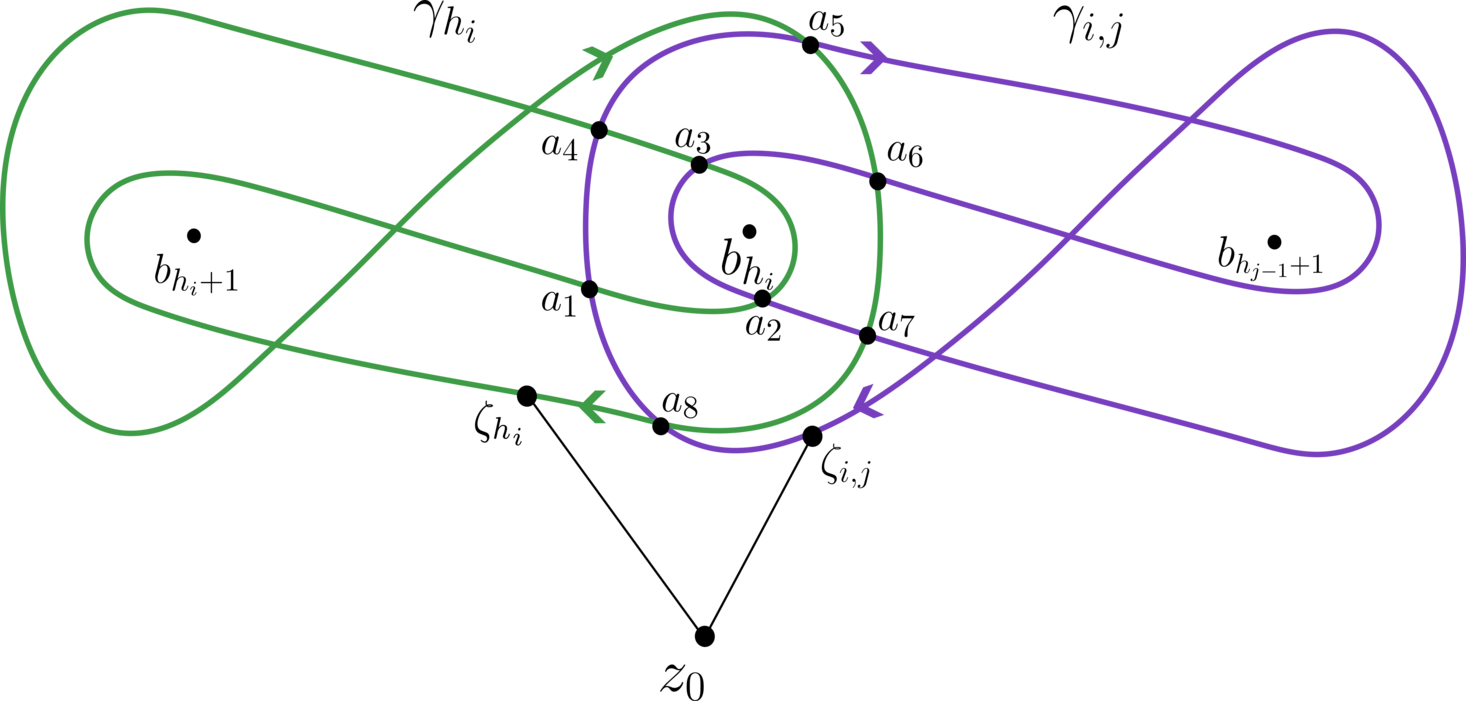}
\end{minipage}

The weighted intersection sum is
$
t^{-k_{i+1}}-t^{-k_{i+1}+k_j}
	+t^{k_i-k_{i+1}+k_j}
	-t^{k_i-k_{i+1}}
	+t^{k_i}-t^{k_i+k_j}+t^{k_j}-1.
$
It factorizes as
$
t^{-k_{i+1}}(1-t^{k_i})
	(1-t^{k_j})(1-t^{k_{i+1}}).
$
By the reduction formula
\eqref{eqn:poincare_dual_intersection_form_expansion},
the cohomology pairing evaluates to:
\begin{align*}
	\langle \omega_r, \omega_{i,j} \rangle
	= \sqrt{-1} \frac{t^{k_i+k_{i+1}} t^{-k_i - k_j} [t^{-k_{i+1}}(1 - t^{k_i})(1 - t^{k_j})(1 - t^{k_{i+1}})]}{(1-t^{k_i})(1-t^{k_{i+1}})(1-t^{-k_i})(1-t^{-k_j})} 
	= \frac{-\sqrt{-1}}{1 - t^{-k_i}}.
\end{align*}

\vspace{1em}

\subsection*{Case 6: Intersection with $\omega_r$ ($r=h_{j-1}$)}
The curves $\gamma_r$ and $\gamma_{i,j}$
intersect eight times near their shared branch point
$b_{r+1}$.
\leavevmode\par
\noindent
\begin{minipage}[c]{0.56\textwidth}
Accounting for monodromies, 
$\wt\gamma_r^{\,0}$ intersects 
$\wt\gamma_{i,j}^{\,-k_{j-1}}$ at $\wt a_1^{\,0}$ (sign $-1$), 
$\wt\gamma_{i,j}^{\,-k_{j-1}+k_i}$ at $\wt a_2^{\,0}$ (sign $+1$), 
$\wt\gamma_{i,j}^{\,-k_{j-1}+k_i-k_j}$ at $\wt a_3^{\,0}$ (sign $-1$), 
$\wt\gamma_{i,j}^{\,-k_{j-1}-k_j}$ at $\wt a_4^{\,0}$ (sign $+1$), 
$\wt\gamma_{i,j}^{\,-k_j}$ at $\wt a_5^{\,0}$ (sign $-1$), 
$\wt\gamma_{i,j}^{\,k_i-k_j}$ at $\wt a_6^{\,0}$ (sign $+1$), 
$\wt\gamma_{i,j}^{\,k_i}$	at $\wt a_7^{\,0}$ (sign $-1$), 
and $\wt\gamma_{i,j}^{\,0}$ at $\wt a_8^{\,0}$ (sign $+1$).
The weighted intersection sum is
\begin{align*}
\begin{aligned}
&-t^{-k_{j-1}}+t^{-k_{j-1}+k_i}\\
&-t^{-k_{j-1}+k_i-k_j}
+t^{-k_{j-1}-k_j}-t^{-k_j}\\
&+t^{k_i-k_j}-t^{k_i}+1.
\end{aligned}
\end{align*}
It factorizes as
\begin{align*}
t^{-k_{j-1}-k_j}(1-t^{k_i})
(1-t^{k_j})(1-t^{k_{j-1}}).
\end{align*}
	\end{minipage}\hfill
\begin{minipage}[c]{0.40\textwidth}
	\centering
	\includegraphics[width=\linewidth]{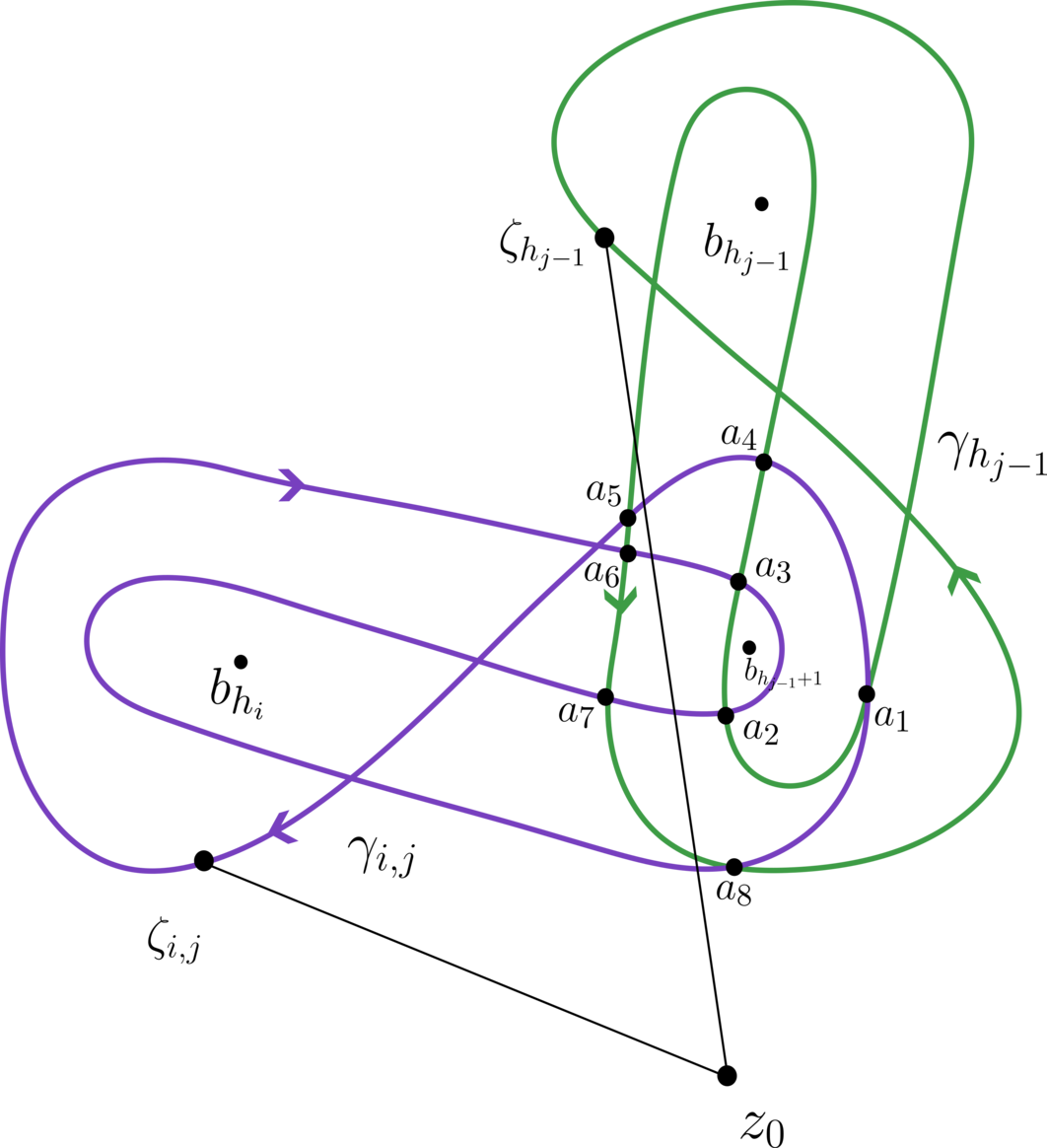}
\end{minipage}

By the reduction formula
\eqref{eqn:poincare_dual_intersection_form_expansion},
the cohomology pairing evaluates to:
\begin{align*}
	\langle \omega_r, \omega_{i,j} \rangle
	= \sqrt{-1} \frac{t^{k_{j-1}+k_j} t^{-k_i-k_j} [t^{-k_{j-1}-k_j}(1 - t^{k_i})(1 - t^{k_j})(1 - t^{k_{j-1}})]}{(1-t^{k_{j-1}})(1-t^{k_j})(1-t^{-k_i})(1-t^{-k_j})} 
	= \frac{\sqrt{-1}}{1 - t^{k_j}}.
\end{align*}

\subsection*{Case 7: Intersection with $\omega_r$ ($r=h_{j-1}+1$)}
\leavevmode\par
\noindent
\begin{minipage}[c]{0.56\textwidth}

\textbf{Subcase 7a: $\gamma_r$ is an Artin class.}
The curves $\gamma_r$ and $\gamma_{i,j}$ intersect four times.
Accounting for monodromies, $\wt\gamma_r^{\,0}$ intersects 
$\wt\gamma_{i,j}^{\,0}$ at $\wt a_1^{\,0}$ (sign $-1$), 
$\wt\gamma_{i,j}^{\,k_i}$ at 	$\wt a_2^{\,0}$ (sign $+1$), 
$\wt\gamma_{i,j}^{\,k_i+k_j}$ at $\wt a_3^{\,0}$ (sign $-1$), and 
$\wt\gamma_{i,j}^{\,k_j}$ at $\wt a_4^{\,0}$ (sign $+1$).
	\end{minipage}\hfill
\begin{minipage}[c]{0.40\textwidth}
	\centering
	\includegraphics[width=\linewidth]{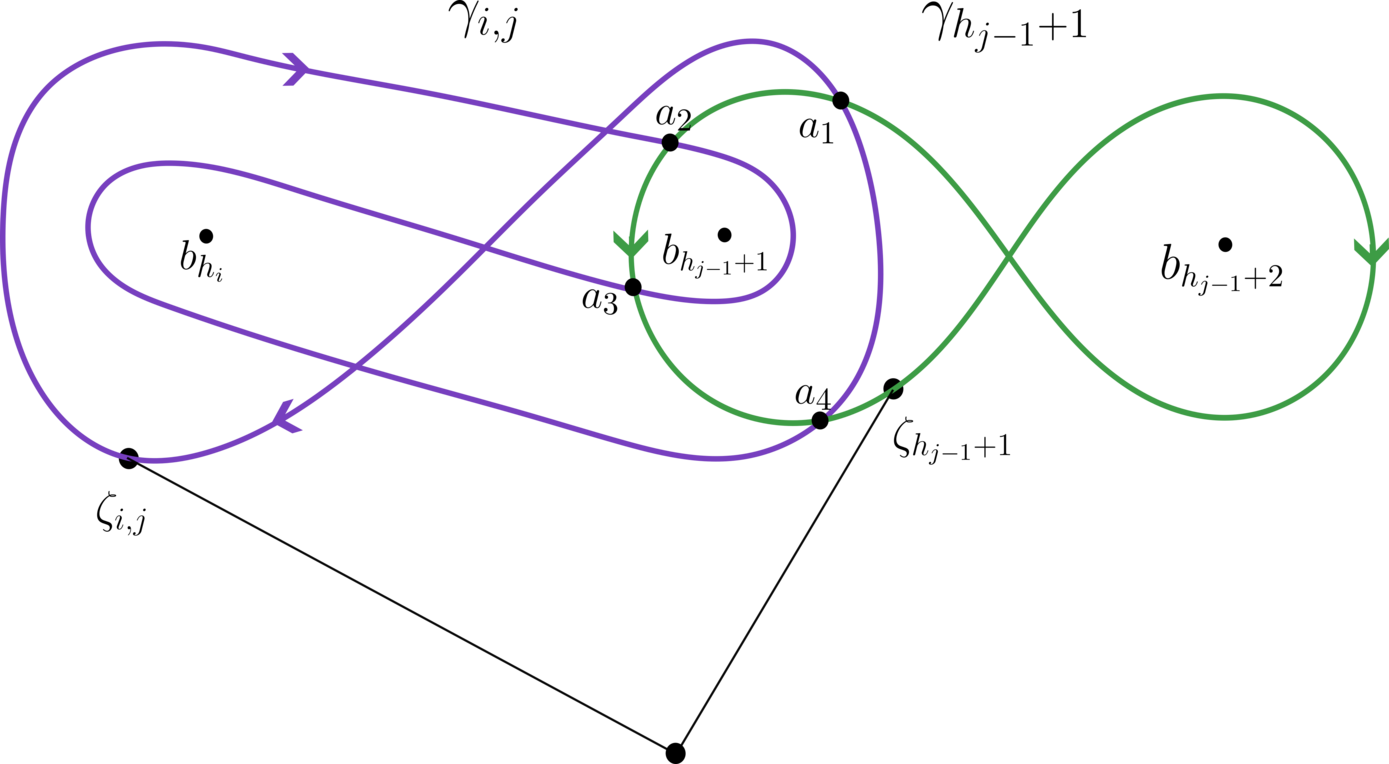}
\end{minipage}

The weighted sum of these intersections yields the raw
polynomial $(-1 + t^{k_i} - t^{k_i+k_j} + t^{k_j})$.
This expression factorizes as $-(1 - t^{k_i})(1 - t^{k_j})$.
By the reduction formula
\eqref{eqn:poincare_dual_intersection_form_expansion},
the cohomology pairing evaluates to:
\begin{align*}
	\langle \omega_r, \omega_{i,j} \rangle
	= \sqrt{-1} \frac{t^{k_j} t^{-k_i - k_j}[-(1 - t^{k_i})(1 - t^{k_j})]}{(1-t^{k_j})(1-t^{-k_i})(1-t^{-k_j})}
	= \frac{\sqrt{-1}}{1 - t^{-k_j}}.
\end{align*}

\vspace{1em}

\textbf{Subcase 7b: $\gamma_r$ is a loop class.}
The curves $\gamma_r$ and $\gamma_{i,j}$ intersect
eight times near their shared branch point.
\leavevmode\par
\noindent
\begin{minipage}[c]{0.56\textwidth}
Accounting for monodromies, $\wt\gamma_r^{\,0}$ intersects 
$\wt\gamma_{i,j}^{\,k_j}$ at $\wt a_1^{\,0}$ (sign $-1$), 
$\wt\gamma_{i,j}^{\,k_j+k_i}$ at $\wt a_2^{\,0}$ (sign $+1$), 
$\wt\gamma_{i,j}^{\,k_i}$ at $\wt a_3^{\,0}$ (sign $-1$), 
$\wt\gamma_{i,j}^{\,0}$ at $\wt a_4^{\,0}$ (sign $+1$), 
$\wt\gamma_{i,j}^{\,-k_{j+1}}$ at $\wt a_5^{\,0}$ (sign $-1$), 
$\wt\gamma_{i,j}^{\,-k_{j+1}+k_i}$ at $\wt a_6^{\,0}$ (sign $+1$), 
$\wt\gamma_{i,j}^{\,-k_{j+1}+k_i+k_j}$
at $\wt a_7^{\,0}$ (sign $-1$), and
$\wt\gamma_{i,j}^{\,-k_{j+1}+k_j}$ at $\wt a_8^{\,0}$ (sign $+1$).
	\end{minipage}\hfill
\begin{minipage}[c]{0.40\textwidth}
	\centering
	\includegraphics[width=\linewidth]{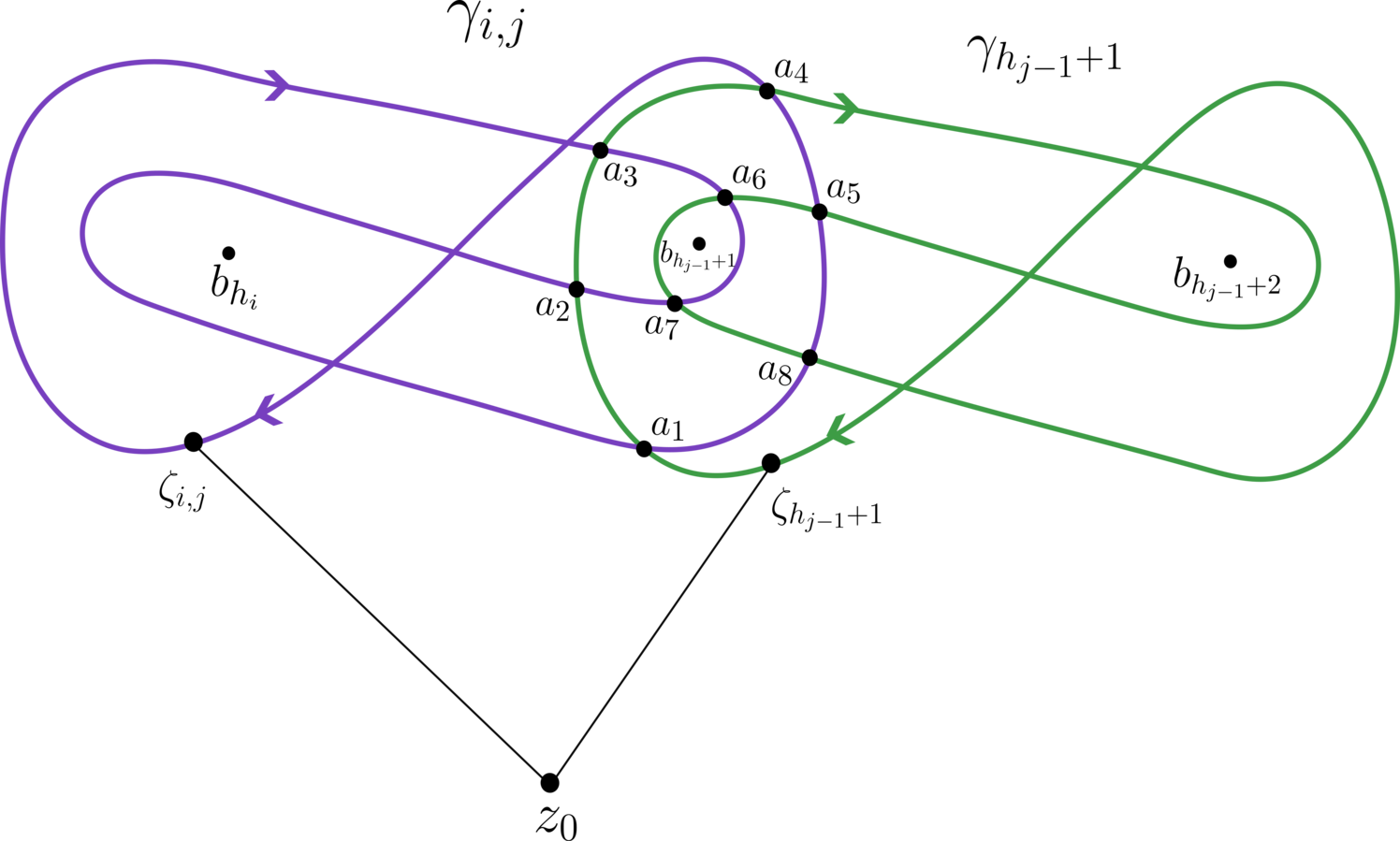}
\end{minipage}

The weighted intersection sum is
$
	1-t^{k_i}-t^{k_j}+t^{k_i+k_j}
	-t^{-k_{j+1}}+t^{-k_{j+1}+k_i}
	+t^{-k_{j+1}+k_j}
	-t^{-k_{j+1}+k_i+k_j}.
$
It factorizes as
$
	(1-t^{k_i})(1-t^{k_j})
	(1-t^{-k_{j+1}}).
$
By the reduction formula
\eqref{eqn:poincare_dual_intersection_form_expansion},
the cohomology pairing evaluates to:
\begin{align*}
	\langle \omega_r, \omega_{i,j} \rangle
	= \sqrt{-1} \frac{t^{k_j + k_{j+1}} t^{-k_i - k_j} [ (1 - t^{k_i})(1 - t^{k_j})(1 - t^{-k_{j+1}}) ]}{(1 - t^{k_j})(1 - t^{k_{j+1}})(1 - t^{-k_i})(1 - t^{-k_j})} 
	= \frac{\sqrt{-1}}{1 - t^{-k_j}}.
\end{align*}
This evaluates every generator pairing and proves the
proposition.
\end{proof}

\subsection{The Intersection Matrix and Spanning}
\label{subsection:intersection_matrix}

Without loss of generality, place the
branch points sequentially on the positive
integers $1,2,\dots,n$. When a block
$P_i$ is $t$-invisible, its branch points
are dropped: the corresponding spanning
classes vanish, and those integers are no
longer treated as branch points.

We begin with the $n-1$ adjacent classes
$\omega_1,\dots,\omega_{n-1}$ as a
candidate spanning set for $H^1(X)_t$.
By Remark~\ref{rem:t_invisible_blocks}, a
block $P_i$ is $t$-invisible when
$t^{k_i}=1$; then every local class with
a branch point in $P_i$ vanishes.  Hence
for $2 \le i \le m-1$, the
$|P_i|+1$ classes
$\omega_{h_{i-1}},\dots,\omega_{h_i}$
vanish; for a boundary block $P_1$ or
$P_m$, only the $|P_i|$ classes
$\omega_1,\dots,\omega_{h_1}$ or
$\omega_{h_{m-1}},\dots,\omega_{n-1}$
vanish.

\begin{figure}[ht]
\centering
\includegraphics[width=\textwidth]{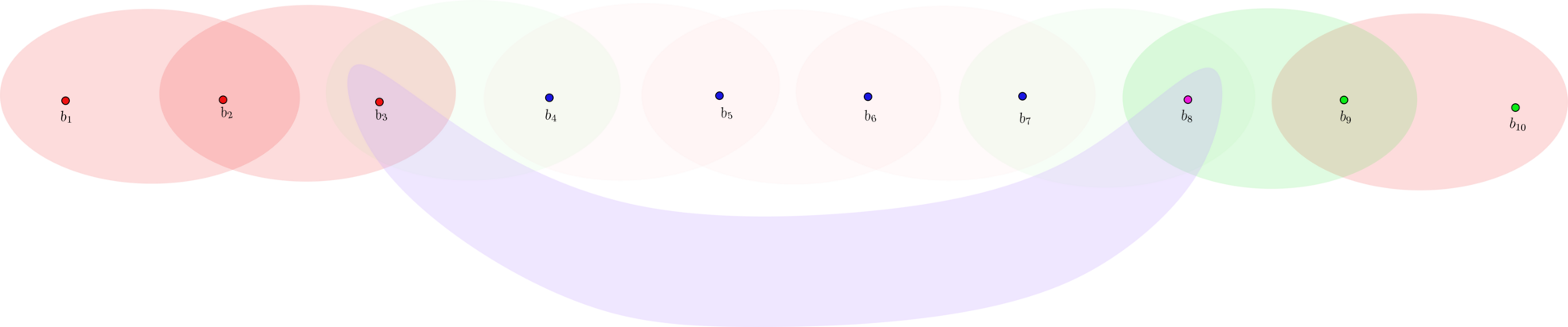}
\caption{Spanning set with a
$t$-invisible block.
All adjacent classes meeting the invisible
block vanish. They are collectively replaced
by one general loop disk (purple).}
\label{fig:disks_generating_set}
\end{figure}
Let $P_i,\dots,P_j$ be a maximal consecutive
sequence of $t$-invisible blocks.  The zero classes
$\omega_{h_{i-1}},\dots,\omega_{h_j}$ are
replaced by the single general loop class
$\omega_{i-1,j+1}$, supported on the disk from
$b_{h_{i-1}}$ to $b_{h_j+1}$.  The resulting
set has cardinality $n-1-r_t$, where
$r_t$ is the total number of branch
points in $t$-invisible blocks. We prove below that
these classes span $H^1(X)_t$. When
$t^{k_\infty}=1$, the
spanning
classes satisfy a single linear relation.

The pairings among the surviving classes are
given by
Propositions~\ref{prop:spanning_intersections}
and~\ref{prop:general_mixed_intersections}.

\begin{proposition}\label{thm:spanning_via_intersection}
Let $\omega_1,\dots,\omega_{\ell-1}$ be
the classes constructed above, where
$\ell = n - r_t$. Let $M$ be their
$(\ell-1)\times(\ell-1)$ intersection
matrix, with entries given by
Propositions~\ref{prop:spanning_intersections}
and~\ref{prop:general_mixed_intersections}.
If $t^{k_\infty} \neq 1$, then
$\det(M) \neq 0$. Thus, the classes are
linearly independent and, by
Proposition~\ref{prop:dim_cyclic_cover},
form a basis of $H^1(X)_t$.
If $t^{k_\infty} = 1$, then
$\det(M) = 0$ and
$\operatorname{rank}(M) = \ell-2$.
Consequently, the classes satisfy a
single linear relation.
\end{proposition}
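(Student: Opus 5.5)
The plan is to compute $\det M$ directly. By Propositions~\ref{prop:spanning_intersections} and~\ref{prop:general_mixed_intersections}, $M$ is a tridiagonal Hermitian matrix. Once the invisible points are dropped, the classes are indexed by consecutive pairs of visible branch points $p_1<p_2<\dots<p_\ell$. Write $x_s:=t^{k_{\pt(p_s)}}$, which satisfies $x_s\neq 1$, and set $c_s:=(1+x_s)/(1-x_s)$. Every diagonal entry then has the uniform shape $\langle\omega_s,\omega_s\rangle=\tfrac{\sqrt{-1}}{2}(c_s+c_{s+1})$. For an Artin class this holds because $c_s=c_{s+1}$. For an adjacent or general loop class it is exactly formula \eqref{eqn:mixed_self_intersection_general}, whose two endpoints are consecutive visible points. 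Every off-diagonal entry has modulus $|1-x_{s+1}|^{-1}$, where $p_{s+1}$ is the shared point. This holds both in \eqref{eqn:spanning_intersections} and in \eqref{eqn:mixed_intersections_general}, since $|1-x|=|1-\ov x|$. Because a tridiagonal determinant depends only on the diagonal and on the products $M_{s,s+1}M_{s+1,s}=|M_{s,s+1}|^2$, the signs and phases of the off-diagonal entries are irrelevant.

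Let $D_s$ be the leading $s\times s$ principal minor. It satisfies the continuant recursion
\begin{align*}
D_s=\tfrac{\sqrt{-1}}{2}(c_s+c_{s+1})D_{s-1}-\frac{1}{|1-x_s|^2}D_{s-2}.
\end{align*}
Since $|x|=1$, we have $|1-x|^{2}=-(1-x)^2/x$ and $c=(1+x)/(1-x)$. Guided by the cases $s=1,2$, I would prove by induction a closed form of the type
\begin{align*}
D_s=\Bigl(\tfrac{\sqrt{-1}}{2}\Bigr)^{s}\,\kappa_s\,\frac{1-x_1x_2\cdots x_{s+1}}{\prod_{q=1}^{s+1}(1-x_q)},
\end{align*}
where $\kappa_s$ is an explicit nonzero monomial in the $x_q$ to be determined from the base cases. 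The inductive step reduces to a polynomial identity in $x_1,\dots,x_{s+1}$ after clearing denominators, and checking that identity is routine.

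Granting the closed form, the full determinant satisfies
\begin{align*}
\det M=D_{\ell-1}\doteq 1-\prod_{s=1}^{\ell}x_s=1-t^{\sum_i k_{\pt(i)}}=1-t^{-k_\infty},
\end{align*}
where $\doteq$ means equality up to a nonzero factor. Invisible points may be included freely in the product because their factors equal $1$. Hence $\det M\neq0$ if and only if $t^{k_\infty}\neq1$. In that case the Gram matrix is nonsingular, so the classes are linearly independent. By Proposition~\ref{prop:dim_cyclic_cover} there are $\ell-1=\dim H^1(X)_t$ of them, so they form a basis.

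If $t^{k_\infty}=1$, then $\det M=0$. The minor $D_{\ell-2}$ is proportional to $1-x_1\cdots x_{\ell-1}=1-\ov{x_\ell}$, which is nonzero because $p_\ell$ is visible. So $\operatorname{rank}M=\ell-2$. The rank of a Gram matrix is at most the dimension of the span of the classes. The span therefore has dimension at least $\ell-2$, which equals $\dim H^1(X)_t$. So the classes span, and they satisfy exactly one linear relation.

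The main obstacle is the closed form for $D_s$. That means pinning down the normalizing monomial $\kappa_s$ correctly and making sure that the contributions coming from general loop classes $\omega_{i-1,j+1}$ really fit the uniform consecutive-point pattern, so that the recursion is literally the same in every case. I would first verify the formula for $s=1,2$ by hand, then run the induction.
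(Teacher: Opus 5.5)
Your proposal is correct and follows the paper's proof: the paper also evaluates the tridiagonal Gram determinant by induction, obtaining $\det(M)=(\sqrt{-1})^{\ell-1}\bigl(1-t^{-k_\infty}\bigr)/\prod_{i=1}^{\ell}(1-t^{k_{\pt(v_i)}})$, and for $t^{k_\infty}=1$ it deletes the last row and column to get a non-zero minor proportional to $1-\prod_{i<\ell}t^{k_{\pt(v_i)}}$. Your normalizing factor is just the constant $\kappa_s=2^s$, so $D_s=(\sqrt{-1})^s(1-x_1\cdots x_{s+1})/\prod_{q\le s+1}(1-x_q)$; with $X_s=x_1\cdots x_s$, the inductive step reduces to the identity $(1-x_sx_{s+1})(1-X_s)-x_s(1-x_{s+1})(1-X_{s-1})=(1-x_s)(1-X_{s+1})$.
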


\begin{proof}
The matrix $M$ is formed by collapsing
each maximal sequence of $t$-invisible
blocks into a single general loop class.
Let $b_{v_1},\dots,b_{v_\ell}$ be the
visible branch points in sequential
order. Its entries are
$M_{i,i} = \langle\omega_i,\omega_i\rangle$,
$M_{i,i+1} = \langle\omega_i,\omega_{i+1}\rangle$,
and $M_{i+1,i} = \langle\omega_{i+1},\omega_i\rangle$,
as given by
Propositions~\ref{prop:spanning_intersections}
and~\ref{prop:general_mixed_intersections}.
By induction on $\ell$, the determinant is
	\begin{align*}
		\det(M)
		= \frac{(\sqrt{-1})^{\ell-1}
		\left(
		1-\prod_{i=1}^{\ell}t^{k_{\pt(v_i)}}
		\right)}
		{\prod_{i=1}^{\ell}
		(1-t^{k_{\pt(v_i)}})}.
	\end{align*}
Every omitted, $t$-invisible branch point contributes
a factor equal to one. Since
	\begin{align*}
k_\infty
		\equiv-\sum_{r=1}^{n}k_{\pt(r)}\pmod d,
	\end{align*}
we have
	\begin{align*}
		\prod_{i=1}^{\ell}t^{k_{\pt(v_i)}}
		=t^{-k_\infty}.
	\end{align*}
Consequently,
	\begin{align*}
		\det(M)
		= \frac{(\sqrt{-1})^{\ell-1}
		(1-t^{-k_\infty})}
		{\prod_{i=1}^{\ell}
		(1-t^{k_{\pt(v_i)}})}.
	\end{align*}
Each visible factor satisfies
$t^{k_{\pt(v_i)}}\neq1$, so the denominator is
non-zero.

When $t^{k_\infty} \neq 1$, we have $\det(M) \neq 0$.
Proposition~\ref{prop:dim_cyclic_cover} states
$\dim H^1(X)_t = \ell-1$. Thus, the $\ell-1$ classes
are linearly independent and form a basis.

When $t^{k_\infty} = 1$, we have $\det(M) = 0$. Let
$M'$ be the principal submatrix obtained by deleting
the final row and column of $M$. Applying the identical
inductive logic to $M'$ yields
	\begin{align*}
		\det(M') = \frac{(\sqrt{-1})^{\ell-2}
		\bigl(1 - \prod_{i=1}^{\ell-1} t^{k_{\pt(v_i)}}\bigr)}
		{\prod_{i=1}^{\ell-1} (1-t^{k_{\pt(v_i)}})}.
	\end{align*}
Since each block is visible, the denominator is
non-zero. The hypothesis
$t^{k_\infty} = 1$ ensures
$\prod_{i=1}^{\ell}t^{k_{\pt(v_i)}}=1$.
Since $t^{k_{\pt(v_\ell)}}\neq1$, we deduce
$\prod_{i=1}^{\ell-1}t^{k_{\pt(v_i)}}\neq1$.
Consequently, the numerator of $\det(M')$ is non-zero,
which implies $\det(M') \neq 0$. This establishes
$\operatorname{rank}(M) = \ell-2$. By
Proposition~\ref{prop:dim_cyclic_cover},
$\dim H^1(X)_t = \ell-2$. The classes therefore span
$H^1(X)_t$ and possess exactly one linear relation.
Proposition~\ref{prop:spanning_relations} gives this
relation explicitly.
\end{proof}

\subsection{Linear Relations}
\label{subsection:linear_relations}

By Proposition~\ref{prop:dim_cyclic_cover},
$\dim H^1(X)_t = n - 1 - r_t$
(or $n - 2 - r_t$ if $t^{k_\infty}=1$).
We constructed $n-1-r_t$ spanning
classes $\omega_1,\dots,\omega_{n-1-r_t}$.
When $t^{k_\infty}=1$, there is exactly
one linear relation among them.

\begin{proposition}\label{prop:spanning_relations}
Let $\ell = n - r_t$, and let $b_{v_1},\dots,b_{v_\ell}$ be 
the visible branch points in sequential order.
The classes $\omega_1,\dots,\omega_{\ell-1}$ span $H^1(X)_t$. 
If $t^{k_\infty}=1$, the unique linear relation is
	\begin{align*}
		\sum_{j=1}^{\ell-1} \bigl(1 - t^{-\kappa_j}\bigr) \omega_j = 0,
	\end{align*}
where $\kappa_j = \sum_{m=1}^j k_{\pt(v_m)}$.
\end{proposition}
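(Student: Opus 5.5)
Spanning is already contained in Proposition~\ref{thm:spanning_via_intersection}, so the only new content is the explicit relation when $t^{k_\infty}=1$. Since the global Hermitian form on $H^1(X)_t$ is non-degenerate and the $\omega_j$ span, a combination $\omega=\sum_j c_j\omega_j$ vanishes if and only if $\langle \omega_i,\omega\rangle=0$ for every $i$. So I will show that the vector $c_j = 1-t^{-\kappa_j}$ lies in the kernel of the intersection matrix, i.e.\ $\sum_j \overline{c_j}\,M_{i,j}=0$ for each $i$ (the conjugation coming from sesquilinearity in the second slot). Uniqueness then follows from $\operatorname{rank}(M)=\ell-2$, also proved in Proposition~\ref{thm:spanning_via_intersection}.

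The first step is to rewrite the matrix in a uniform way. Write $x_m := t^{k_{\pt(v_m)}}$ for the $m$-th visible branch point, and put $A(x):=\frac{1+x}{1-x}$. By Propositions~\ref{prop:spanning_intersections} and~\ref{prop:general_mixed_intersections}, in every case (Artin, adjacent loop, or general loop class bridging an invisible run) the matrix entries are
\[
M_{j,j}=\tfrac{\sqrt{-1}}{2}\bigl(A(x_j)+A(x_{j+1})\bigr),\qquad M_{j,j+1}=\frac{-\sqrt{-1}}{1-x_{j+1}}.
\]
For an Artin class $x_j=x_{j+1}$, which recovers $\sqrt{-1}\,A$. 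By Hermitian symmetry, and since $\bar x=x^{-1}$, we also get $M_{j+1,j}=\frac{\sqrt{-1}}{1-x_{j+1}^{-1}}\cdot(-1)\cdot(-1)$. I will double-check the signs here: for the general loop class one has $\langle\omega_{h_i-1},\omega_{i,j}\rangle$ and $\langle\omega_{i,j},\omega_{h_{j-1}+1}\rangle=\overline{\sqrt{-1}/(1-t^{-k_j})}=-\sqrt{-1}/(1-t^{k_j})$, which matches the adjacent pattern. This makes $M$ a uniform tridiagonal Hermitian matrix in the visible parameters $x_1,\dots,x_\ell$.

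The second step is to verify row $i$ of the kernel equation:
\[
M_{i,i-1}\overline{c_{i-1}}+M_{i,i}\overline{c_i}+M_{i,i+1}\overline{c_{i+1}}=0,
\qquad \overline{c_j}=1-y_j,\quad y_j:=x_1\cdots x_j .
\]
Using $\frac{1}{1-x^{-1}}=\frac{-x}{1-x}$ and $A(x)=-1+\frac{2}{1-x}$, this reduces to an identity among the rational functions in $x_i,x_{i+1}$, with $y_{i\pm1}=y_i x_{i+1}^{\pm}$. The $y_i$-terms and the constant terms should cancel separately, and this is a routine check. The boundary rows $i=1$ and $i=\ell-1$ need extra attention: row $1$ uses $\overline{c_0}=0$, which holds since $\kappa_0=0$, and row $\ell-1$ needs $\overline{c_\ell}=1-y_\ell=0$, which is exactly $t^{-k_\infty}=1$, using the computation $\prod x_m=t^{-k_\infty}$ from Proposition~\ref{thm:spanning_via_intersection}.

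The main obstacle I expect is sign and conjugation bookkeeping: deciding whether the relation vector or its conjugate is what pairs to zero, and making sure the general-loop entries really fit the uniform pattern. If the direct check comes out with conjugate exponents, I would instead verify $\sum_j c_j\langle\omega_j,\omega_i\rangle=0$, which is the same condition. Finally, a nonzero vector in the one-dimensional kernel gives the unique relation. It is nonzero because $c_1=1-x_1^{-1}\neq0$ by visibility.
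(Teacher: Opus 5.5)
Your proposal is correct and is essentially the paper's proof: you pair the candidate combination against each spanning class, use the tridiagonal intersection matrix with $\overline{c_j}=1-t^{\kappa_j}$ (which is the paper's $c_j$), check that each interior row vanishes, use $\overline{c_0}=0$ and $t^{\kappa_\ell}=t^{-k_\infty}=1$ at the two boundary rows, and conclude by non-degeneracy. The row check does go through as you predicted, since the constant part and the $y_i$ part each reduce to $\tfrac12-\tfrac12$; your uniqueness argument via $\operatorname{rank}(M)=\ell-2$ and $1-t^{-\kappa_1}\neq0$ spells out what the paper leaves to Proposition~\ref{thm:spanning_via_intersection}.
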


\begin{proof}
Assume $t^{k_\infty}=1$. Define the
candidate relation
$\omega = \sum_{j=1}^{\ell-1} \ov{c_j} \omega_j$,
where $c_j = 1 - t^{\kappa_j}$.
Setting $c_0=0$ and $c_\ell=0$ is consistent because
$\kappa_0=0$ and
	\begin{align*}
t^{\kappa_\ell}
		=\prod_{j=1}^{\ell}t^{k_{\pt(v_j)}}
		=t^{-k_\infty}=1.
	\end{align*}
We prove $\omega = 0$.

For any spanning class $\omega_i$,
expand $\langle\omega_i,\omega\rangle$
and let $M$ be the matrix from
Proposition~\ref{thm:spanning_via_intersection}.
Thus, $\langle\omega_i,\omega\rangle = (Mc)_i$.
Since $M$ is tridiagonal,
$(Mc)_i = M_{i,i-1}c_{i-1}
	+ M_{i,i}c_i + M_{i,i+1}c_{i+1}$.
Substituting the intersection values from
Propositions~\ref{prop:spanning_intersections}
and~\ref{prop:general_mixed_intersections},
this evaluates to zero. For the boundary
case $i = \ell - 1$, this evaluation
explicitly relies on the hypothesis
$t^{k_\infty} = 1$. By non-degeneracy
of the intersection form on $H^1(X)_t$,
we conclude $\omega=0$, and hence
the relation holds.
\end{proof}

\begin{proposition}[Non-spanning mixed classes]%
\label{prop:mixed_linear_combination}
Suppose that every block $P_i,\dots,P_j$ is
$t$-visible. For $j>i+1$, the non-spanning mixed
class decomposes as
	\begin{align*}
		\omega_{i,j} = \sum_{\alpha=h_i}^{h_{j-1}} \omega_\alpha.
	\end{align*}
\end{proposition}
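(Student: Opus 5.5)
The approach is to use non-degeneracy of the Hermitian form, as in the proof of Proposition~\ref{prop:spanning_relations}. Set $\Delta:=\omega_{i,j}-\sum_{\alpha=h_i}^{h_{j-1}}\omega_\alpha$. I will show that $\langle\omega_r,\Delta\rangle=0$ for every spanning class $\omega_r$. Since all blocks $P_i,\dots,P_j$ are visible, no collapsing occurs in this range, and the spanning set near $D_{i,j}$ consists of the ordinary adjacent classes. By Proposition~\ref{thm:spanning_via_intersection} these classes span $H^1(X)_t$, with one relation when $t^{k_\infty}=1$, which does not affect spanning. The form is non-degenerate on $H^1(X)_t$, so vanishing against all $\omega_r$ forces $\Delta=0$.

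I will compute $\langle\omega_r,\Delta\rangle$ in cases, using Propositions~\ref{prop:spanning_intersections} and~\ref{prop:general_mixed_intersections}. Both the Gram matrix of adjacent classes and the pairings of $\omega_{i,j}$ are local.

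\textbf{Indices far away.} If $r<h_i-1$ or $r>h_{j-1}+1$, every term vanishes.

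\textbf{Boundary indices.} For $r=h_i-1$, only $\omega_{h_i}$ in the sum meets $\omega_r$. The required identity is $\langle\omega_{h_i-1},\omega_{i,j}\rangle=\langle\omega_{h_i-1},\omega_{h_i}\rangle$, and both sides equal $-\sqrt{-1}/(1-t^{k_i})$. The case $r=h_{j-1}+1$ is symmetric: one compares $\sqrt{-1}/(1-t^{-k_j})$ with $\langle\omega_{h_{j-1}+1},\omega_{h_{j-1}}\rangle=\overline{-\sqrt{-1}/(1-t^{k_j})}$, and these agree.

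\textbf{Interior indices.} For $h_i<r<h_{j-1}$, we have $\langle\omega_r,\omega_{i,j}\rangle=0$. So I must check that the row sum $\langle\omega_r,\omega_{r-1}\rangle+\langle\omega_r,\omega_r\rangle+\langle\omega_r,\omega_{r+1}\rangle$ vanishes. With $b_r,b_{r+1}$ carrying exponents $a,b$, this reads
\[
\overline{\tfrac{-\sqrt{-1}}{1-t^{a}}}+\langle\omega_r,\omega_r\rangle+\tfrac{-\sqrt{-1}}{1-t^{b}} .
\]
Using $\frac{1}{1-\bar u}=1-\frac{1}{1-u}$ for $|u|=1$ and the identity $\frac{1+u}{1-u}=\frac{2}{1-u}-1$, the diagonal entries (Artin: $a=b$; loop: the average) cancel exactly. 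This should be a routine check for both disk types.

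\textbf{The endpoints $r=h_i$ and $r=h_{j-1}$.} These are the main obstacle. At $r=h_i$, the row sum is missing its left neighbour $\omega_{h_i-1}$. Using the interior computation, the partial sum equals $-\overline{\langle\omega_{h_i},\omega_{h_i-1}\rangle}=\sqrt{-1}/(1-t^{-k_i})$, with the opposite sign of the needed $-\sqrt{-1}/(1-t^{-k_i})$. I need to carefully recheck this sign bookkeeping and the lift/base-point conventions of Remark~\ref{rem:base_point_convention}. If a genuine sign discrepancy persists, then the correct identity must absorb a compensating factor in the normalization of $\wt\gamma_{i,j}$. The fix would be to verify directly at the level of cycles that $[\wt\gamma_{i,j}]$ is homologous to $\sum_\alpha[\wt\gamma_\alpha]$: decompose the Pochhammer contour $[\ell_{h_i},\ell_{h_{j-1}+1}]$ by sliding it along the chain of intermediate loops and tracking the deck-sheet labels. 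The case $j=i+2$ with a single loop pair serves as a test of the signs. Finally, I would confirm that the same cancellation holds at $r=h_{j-1}$ and conclude by non-degeneracy.
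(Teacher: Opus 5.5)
Your overall strategy is the same as the paper's: pair against every spanning class, invoke non-degeneracy, and split into far, interior and boundary indices. Your far, interior, $r=h_i-1$ and $r=h_{j-1}+1$ checks are correct. The gap is that the two decisive cases $r=h_i$ and $r=h_{j-1}$ are never carried out. Moreover, the ``opposite sign'' you report at $r=h_i$ is a conjugation slip, not a real discrepancy. By your own interior row-sum identity, the partial sum at $r=h_i$ is
$-\langle\omega_{h_i},\omega_{h_i-1}\rangle=-\overline{\langle\omega_{h_i-1},\omega_{h_i}\rangle}=-\overline{\bigl(-\sqrt{-1}/(1-t^{k_i})\bigr)}=-\sqrt{-1}/(1-t^{-k_i})$,
because $\overline{-\sqrt{-1}}=+\sqrt{-1}$. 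You can also compute it directly, using $\frac{1+u}{1-u}=\frac{2}{1-u}-1$ and the exponents $k_i,k_{i+1}$ of $\omega_{h_i}$:
\[
\langle\omega_{h_i},\omega_{h_i}\rangle+\langle\omega_{h_i},\omega_{h_i+1}\rangle
=\sqrt{-1}\Bigl(\frac{1}{1-t^{k_i}}+\frac{1}{1-t^{k_{i+1}}}-1\Bigr)-\frac{\sqrt{-1}}{1-t^{k_{i+1}}}
=\frac{\sqrt{-1}\,t^{k_i}}{1-t^{k_i}}
=\frac{-\sqrt{-1}}{1-t^{-k_i}} .
\]
This is exactly the value of $\langle\omega_{h_i},\omega_{i,j}\rangle$ in Proposition~\ref{prop:general_mixed_intersections}. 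For $r=h_{j-1}$, use $\overline{-\sqrt{-1}/(1-u)}=\sqrt{-1}-\sqrt{-1}/(1-u)$ for $|u|=1$ and the exponents $k_{j-1},k_j$ of $\omega_{h_{j-1}}$:
\[
\langle\omega_{h_{j-1}},\omega_{h_{j-1}-1}\rangle+\langle\omega_{h_{j-1}},\omega_{h_{j-1}}\rangle
=\Bigl(\sqrt{-1}-\frac{\sqrt{-1}}{1-t^{k_{j-1}}}\Bigr)
+\sqrt{-1}\Bigl(\frac{1}{1-t^{k_{j-1}}}+\frac{1}{1-t^{k_j}}-1\Bigr)
=\frac{\sqrt{-1}}{1-t^{k_j}},
\]
which is the required value. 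Both computations remain valid when $h_i+1=h_{j-1}$. With them, the pairing argument closes with no further input.

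Your proposed fallbacks should therefore be dropped. Rescaling $\wt\gamma_{i,j}$ would change the class $\omega_{i,j}$, so it would alter the statement rather than prove it. The cycle-level sliding of the Pochhammer contour is also unnecessary.

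A smaller point concerns your claim that only ordinary adjacent classes meet $D_{i,j}$. This fails when $|P_i|=1$ and $P_{i-1}$ is $t$-invisible, or when $|P_j|=1$ and $P_{j+1}$ is $t$-invisible. In those cases the neighbouring spanning class at $b_{h_i}$ (resp.\ $b_{h_{j-1}+1}$) is a bridge class. As the paper does, first pass to the visible-only configuration. There the bridge class is an adjacent loop class, and the same boundary check applies.
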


\begin{proof}
Since the spanning classes $\{\omega_r\}$ span $H^1(X)_t$,
the non-degenerate intersection form determines a class
uniquely. It therefore suffices to verify that
	\begin{align*}
		\langle \omega_r, \omega_{i,j} \rangle
		= \Bigl\langle \omega_r,
		   \sum_{\alpha=h_i}^{h_{j-1}} \omega_\alpha \Bigr\rangle
	\end{align*}
holds for every spanning class $\omega_r$.
When $r < h_i-1$ or $r > h_{j-1}+1$, the support
of $\omega_r$ is disjoint from the support of
$\omega_{i,j}$, so both sides vanish.
For $h_i < r < h_{j-1}$, the tridiagonality of the
intersection pairing
	(Proposition~\ref{prop:spanning_intersections})
reduces the sum
$\sum_{\alpha=h_i}^{h_{j-1}}
	\langle \omega_r, \omega_\alpha \rangle$
to at most three terms with
$\alpha = r-1, r, r+1$. Substituting the
explicit formulas from that proposition,
these terms cancel, giving zero. The remaining
boundary cases
$r \in \{h_i-1, h_i, h_{j-1}, h_{j-1}+1\}$
are verified by direct evaluation using
Proposition~\ref{prop:spanning_intersections}.
Each reduces to one or two intersection
pairings and matches the left side exactly.
\end{proof}

\begin{remark}
If $t$-invisible blocks occur, first pass to the
visible-only configuration described in
Subsection~\ref{subsection:intersection_matrix}. A bridge
class across an invisible run is adjacent in that
configuration and belongs to the spanning set. The
proposition applies to non-adjacent loop classes after
reindexing the visible configuration.
\end{remark}

\begin{remark}[Isotropic classes]\label{rem:isotropic}
For a spanning class $\omega_\sigma$, its
self-intersection vanishes exactly when $t^{k_{\pt(i)}}=-1$ for
an \artindisk, or when $t^{k_i+k_{i+1}}=1$ with
$t^{k_i} \neq 1$ and $t^{k_{i+1}} \neq 1$ for a \loopdisk.
In such instances, the class $\omega_\sigma$ is isotropic.
\end{remark}

%%%%%%%%%%%%%%%%%%%%%%%%%%%%%%%%%%
\section{The Monodromy Representation}\label{section:action_mbg}

We compute the representation
$\rho_t \colon \bnp \to \operatorname{GL}(H^1(X)_t)$.
We determine its action on spanning classes and
establish Theorem~A. Fix a generator
$\sigma \in \bnp$ with support disk $\dd_\sigma$
and a spanning class $\omega$ with support disk
$\dd$. Neither branch point of $\dd$ belongs
to a $t$-invisible block, so $\omega \neq 0$
and $\dim H^1_c(\wt\dd)_t = 1$.

If $\dd$ and $\dd_\sigma$ are disjoint, their
preimages $\wt\dd$ and $\wt\dd_\sigma$ are disjoint,
and $\sigma$ acts as the identity on $\omega$.
If $\dd = \dd_\sigma$, the action of $\sigma$
on $\omega_\sigma$ restricts to the $1$-dimensional
space $H^1_c(\wt\dd_\sigma)_t$. This self-action
is computed in Subsection~\ref{subsection:action_own_support}.

Assume now that $\dd$ and $\dd_\sigma$ are distinct
but intersect. By the geometry of the chosen
supports, they intersect in a smaller disk
containing exactly one shared branch point.
The Mayer--Vietoris sequence for compactly
supported cohomology of $\wt\dd_\sigma \cup \wt\dd$ is
\begin{align*}
H^1_c(\wt\dd_\sigma \cap \wt\dd)
	\to H^1_c(\wt\dd_\sigma) \oplus
H^1_c(\wt\dd)
	\to H^1_c(\wt\dd_\sigma \cup \wt\dd)
	\to H^2_c(\wt\dd_\sigma \cap \wt\dd).
\end{align*}
The intersection $\wt\dd_\sigma \cap \wt\dd$
is a disjoint union of disks above the single
shared branch point, making
$H^1_c(\wt\dd_\sigma \cap \wt\dd) = 0$.
Specializing to $t$-eigenspaces, we find
$H^2_c(\wt\dd_\sigma \cap \wt\dd)_t = 0$
because the shared point is some $b_s$ with
$t^{k_{\pt(s)}}\neq1$. Otherwise $\dd$ would
contain a $t$-invisible branch point.
Consequently, we obtain the short exact sequence
\begin{equation*}
	0 \to H^1_c(\wt\dd_\sigma)_t \oplus
H^1_c(\wt\dd)_t
	\to H^1_c(\wt\dd_\sigma \cup \wt\dd)_t \to 0.
\end{equation*}
If the other branch point of $\dd_\sigma$
is $t$-invisible, then $H^1_c(\wt\dd_\sigma)_t = 0$
and $H^1_c(\wt\dd_\sigma \cup \wt\dd)_t
\cong H^1_c(\wt\dd)_t$. Otherwise, the space
$H^1_c(\wt\dd_\sigma \cup \wt\dd)_t$ is the direct sum
$H^1_c(\wt\dd_\sigma)_t \oplus H^1_c(\wt\dd)_t$,
which is two-dimensional. Therefore, to determine
the action of $\sigma$ on $\omega$, it suffices
to compute its action on
$H^1_c(\wt\dd_\sigma \cup \wt\dd)_t$.
This intersecting case, along with the synthesis
of the full representation, is the content of
Subsections~\ref{subsection:action_adjacent_classes}
and \ref{subsection:representation_theorem}.

\subsection{The action of a generator on its
own support}
\label{subsection:action_own_support}

We compute the action of a generator
$\sigma$ on $H^1_c(\wt\dd_\sigma)_t$.
If either branch point of
$\dd_\sigma$ lies in a $t$-invisible
block, then
$H^1_c(\wt\dd_\sigma)_t = 0$ and
$\sigma$ acts as the identity.  We
therefore restrict to generators
whose support disk has no branch
point in a $t$-invisible block.

\subsubsection*{Action of a generator on its own class}

The Artin and loop generators have the same local form.
Identify their support disk with $\mdd(0,2)$ so that the
two branch points correspond to $-1$ and $1$. Fix
$1<r<s<2$ and a smooth non-increasing function
$\chi\colon[0,2]\longrightarrow[0,1]$
such that $\chi=1$ on $[0,r]$ and $\chi=0$ on
$[s,2]$. For $\theta=\pi$ or $2\pi$, define
$R_\theta(z)
=\exp\!\bigl(\sqrt{-1}\theta\chi(|z|)\bigr)z.$
Extend $R_\theta$ by the identity outside $\mdd(0,2)$.
Its real-valued annular angle lift decreases from
$\theta$ at the inner boundary to zero at the outer
boundary. Thus $R_\pi$ is the right half-twist and
$R_{2\pi}$ is the right full twist.

By \eqref{eqn:poincare_naturality}, the action on a
cohomology class is computed by pushing forward its
Poincar\'e-dual cycle. We therefore evaluate
$(\wt{R_\theta})_*(\wt\gamma)$ for the corresponding
inner contour $\gamma$.

Assume the basepoint $z_0$ lies outside $\mdd(0,2)$.
Let $\gamma$ denote a representative of its homology
class supported entirely within the inner disk $\mdd(0,r)$.
Pick a curve $\delta$ in the lower half-plane from
$z_0$ to the local basepoint $\zeta$
(Remark~\ref{rem:base_point_convention}),
and let $\delta'$ be the path from $\zeta$ to $0$ along $\gamma$.
The homology class of $\gamma$ corresponds to the homotopy
class of the based loop $\delta \cdot \gamma \cdot \ov\delta$.

By convention, the homology class $\wt\gamma^0$ corresponds
to the lift based at $\wt z_0^0$. Because $R_\theta$ is the
identity outside $\mdd(0,s)$ and
the canonical lift
$\wt{R_\theta}$ is fixed near the fiber at infinity
(Remark~\ref{rem:canonical_lift}), it follows that
$\wt{R_\theta}$ fixes $\wt z_0^0$. The pushforward
$(\wt{R_\theta})_*(\wt\gamma^0)$ is therefore computed by
lifting the transformed loop
$R_\theta(\delta \cdot \gamma \cdot \ov\delta)$ from $\wt z_0^0$.

The composite path $\delta\cdot\delta'$ runs from $z_0$
to $0$ through the lower half-plane. Define
\begin{align*}
\alpha
=\delta\cdot\delta'
\cdot\ov{R_\theta(\delta')}
\cdot\ov{R_\theta(\delta)},
\end{align*}
which forms a loop based at $z_0$ whose monodromy $k_\alpha$
depends on the specific generator. Using the identity
$R_\theta(\delta\cdot\delta') = \ov\alpha \cdot \delta\cdot\delta'$
and denoting the transformed inner loop by $\gamma'$,
we expand the conjugate:
\begin{align*}
R_\theta(\delta \cdot \gamma \cdot \ov\delta)
	&= R_\theta(\delta\cdot\delta' \cdot \ov{\delta'}\cdot \gamma \cdot \delta' \cdot \ov{\delta'}\cdot\ov\delta) \\
	&= R_\theta(\delta\cdot\delta') \cdot R_\theta(\ov{\delta'}\cdot\gamma\cdot\delta') \cdot \ov{R_\theta(\delta\cdot\delta')} \\
	&= (\ov\alpha \cdot \delta\cdot\delta') \cdot (\ov{\delta'}\cdot\gamma'\cdot\delta') \cdot (\ov{\delta'}\cdot\ov\delta \cdot \alpha) \\
	&= \ov\alpha \cdot (\delta \cdot \gamma' \cdot \ov\delta) \cdot \alpha.
\end{align*}

Lifting this composite loop from $\wt z_0^0$ proceeds
in three segments. The initial segment $\ov\alpha$ has
monodromy $-k_\alpha$ and terminates at
$T_*^{-k_\alpha}(\wt z_0^0)$. The middle factor
$\delta \cdot \gamma' \cdot \ov\delta$ lifts to the translated
cycle $(\wt\gamma')^{-k_\alpha}$. Evaluating the complete lift
in homology thus yields:
\begin{align*}
	(\wt{R_\theta})_*(\wt\gamma^0) &= (\wt\gamma')^{-k_\alpha}.
\end{align*}

\paragraph{Artin generators.}
For the Artin generator $\sigma_i$, map $b_i$ and $b_{i+1}$
to $-1$ and $1$. The half-twist acts as $\theta = \pi$.
The loop $\alpha$ encircles $1$ clockwise, possessing
monodromy $k_\alpha = -k_{\pt(i)}$. 

\noindent
\begin{minipage}[c]{0.65\textwidth}
The rotation reverses the
inner loop, yielding $\gamma' = \ov\gamma_i$. Evaluating the
homology pushforward yields
\begin{align*}
	(\wt{R_\pi})_*(\wt\gamma_i^0) &= -\wt\gamma_i^{k_{\pt(i)}}.
\end{align*}

\begin{lemma}[Action of Artin generators on their own class]
	\label{lem:artin_self_action}
For $t \in \mu'(d)$ with $t\notin\mu(e_{\pt(i)})$, the Artin
generator $\sigma_i$ acts on $\omega_i$ by
	\begin{align*}
		\rho_t(\sigma_i)(\omega_i) &= -t^{k_{\pt(i)}}\omega_i.
	\end{align*}
\end{lemma}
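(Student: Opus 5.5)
The plan is to compute $\rho_t(\sigma_i)(\omega_i)$ by pushing forward the Poincar\'e-dual cycle. By \eqref{eqn:poincare_naturality}, $\rho_t(\sigma_i)\omega_i=\eta\bigl((\wt\sigma_i)_*\wt\gamma_i\bigr)$. The canonical lift of $\sigma_i$ agrees with $\wt{R_\pi}$, since both are fixed near the fiber at infinity and $R_\pi$ represents $\sigma_i$ in $\operatorname{Mod}_c(\cc,B')$. It therefore suffices to evaluate $(\wt{R_\pi})_*\wt\gamma_i$, where $\wt\gamma_i$ is the weighted sum \eqref{eqn:artin_atom}.

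\textbf{Step 1.} Use the displayed identity $(\wt{R_\pi})_*(\wt\gamma_i^{0})=-\wt\gamma_i^{\,k_{\pt(i)}}$, obtained just above from $k_\alpha=-k_{\pt(i)}$ and $\gamma'=\ov\gamma_i$. To pass from the lift $\wt\gamma_i^0$ to every lift $\wt\gamma_i^{g}=T^g_*\wt\gamma_i^0$, I would invoke Proposition~\ref{prop:lift-commutes-T}: $\wt{R_\pi}$ commutes with $T$. This gives
\[
(\wt{R_\pi})_*\wt\gamma_i^{g}=T^g_*(\wt{R_\pi})_*\wt\gamma_i^0=-\wt\gamma_i^{\,g+k_{\pt(i)}}.
\]

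\textbf{Step 2.} Substitute into the sum, with $c=\sqrt{2/d}\,t^{k}/(1-t^{k})$ and $k=k_{\pt(i)}$:
\[
(\wt{R_\pi})_*\wt\gamma_i=-c\sum_{g}\ov t^{\,g}\wt\gamma_i^{\,g+k}.
\]
Reindex with $g\mapsto g-k$. This turns $\ov t^{\,g}$ into $\ov t^{\,g-k}=t^{k}\ov t^{\,g}$, so the sum equals $-t^{k}\wt\gamma_i$. Equivalently, because $T_*\wt\gamma_i=t\wt\gamma_i$, we have $-T^k_*\wt\gamma_i=-t^k\wt\gamma_i$. Applying $\eta$ then yields $\rho_t(\sigma_i)\omega_i=-t^{k}\omega_i$. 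The hypothesis $t\notin\mu(e_{\pt(i)})$ gives $t^{k}\neq1$, so by Proposition~\ref{Prop:Dim_Disk_2_branch_points} $\omega_i$ is a nonzero generator of the one-dimensional space $H^1_c(\wt D_i)_t$. The normalization is also well defined, and Lemma~\ref{lem:global_injectivity} ensures the identity is not vacuous.

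\textbf{Main obstacle.} The delicate point is the local identity in Step 1. It requires confirming two things. First, the half-twist $R_\pi$ carries $\gamma_i=\ell_i\ov{\ell_{i+1}}$ to a curve homologous to $\ov\gamma_i$ in $\mdd(0,r)$, with the base point moved compatibly along $\delta'$. Second, the correction loop $\alpha$ encircles only $b_{i+1}$ clockwise. I would check both by tracking the image of the lower-half-plane path $\delta\cdot\delta'$ under the rotation: it becomes a path through the upper half-plane, and the difference winds once negatively around $+1$. The orientation reversal of the figure-eight follows because swapping the two lobes exchanges $\ell_i$ and $\ell_{i+1}$. Sign errors are the main risk, and I would cross-check the result against the expected eigenvalue $-t^{k}$ of a half-twist on the Burau-type local system.
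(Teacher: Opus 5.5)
Your proposal is correct and follows the paper's route. The paper's proof likewise rests on the local identity $(\wt{R_\pi})_*(\wt\gamma_i^{0})=-\wt\gamma_i^{\,k_{\pt(i)}}$, computed just before the lemma, and reads off the eigenvalue from the definition \eqref{eqn:artin_atom}; you simply make explicit the step the paper leaves implicit, namely extending that identity to all lifts via $T$-equivariance of the canonical lift (Proposition~\ref{prop:lift-commutes-T}) and reindexing the sum to get $-T^{k_{\pt(i)}}_*\wt\gamma_i=-t^{k_{\pt(i)}}\wt\gamma_i$.
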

\begin{proof}
Recall the definition of the eigenspace lift from
	\eqref{eqn:artin_atom}. The evaluation
$(\wt{R_\pi})_*(\wt\gamma_i^0) = -\wt\gamma_i^{k_{\pt(i)}}$
on the base homology class directly yields the required
action on cohomology.
\end{proof}
\end{minipage}\hfill
\begin{minipage}[c]{0.32\textwidth}
	\centering
	\includegraphics[width=\linewidth]{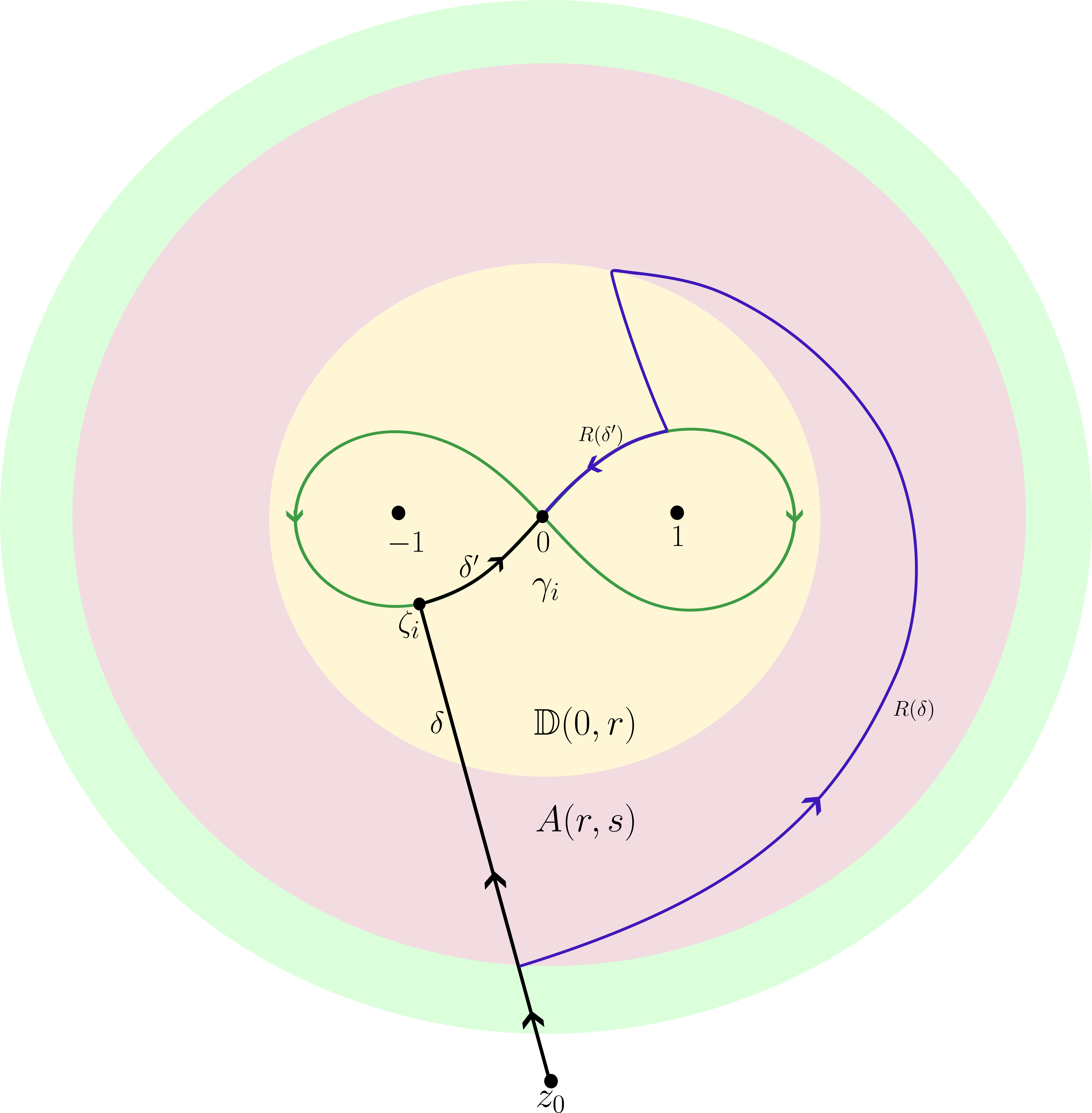}
\end{minipage}

\vspace{0.5cm}

\paragraph{Loop generators.}
For the loop generator $A_{i,j}$, map $b_{h_i}$ and
$b_{h_{j-1}+1}$ to $-1$ and $1$. The full twist
acts as $\theta=2\pi$.
The loop $\alpha$ encircles the origin clockwise, enclosing
both branch points and possessing monodromy
$k_\alpha = -(k_i+k_j)$. 

\noindent
\begin{minipage}[c]{0.65\textwidth}
Although $R_{2\pi}$ is pointwise the identity on the
inner disk, its real-valued annular angle lift is
nontrivial. It fixes the inner contour as a parametrized
curve, so
$\gamma'=\gamma_{i,j}$. The homology pushforward is
\begin{align*}
	(\wt{R_{2\pi}})_*(\wt\gamma_{i,j}^0) &= \wt\gamma_{i,j}^{k_i+k_j}.
\end{align*}

\begin{lemma}[Action of loop generators on their own class]
	\label{lem:mixed_self_action}
For $t \in \mu'(d)$ with $t\notin\mu(e_i)\cup\mu(e_j)$,
the loop generator $A_{i,j}$ acts on $\omega_{i,j}$ by
	\begin{align*}
		\rho_t(A_{i,j})(\omega_{i,j}) &= t^{k_i+k_j}\omega_{i,j}.
	\end{align*}
\end{lemma}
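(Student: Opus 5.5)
The plan mirrors the proof of Lemma~\ref{lem:artin_self_action}. The homology pushforward $(\wt{R_{2\pi}})_*(\wt\gamma_{i,j}^{\,0}) = \wt\gamma_{i,j}^{\,k_i+k_j}$ has just been computed, and everything reduces to propagating it through the eigen-combination \eqref{eqn:mixed_atom} and then through Poincar\'e duality \eqref{eqn:poincare_naturality}.

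First, I would extend the computation from the base lift to every lift. By Proposition~\ref{prop:lift-commutes-T}, the canonical lift $\wt A_{i,j} = \wt{R_{2\pi}}$ commutes with $T$. Since $\wt\gamma_{i,j}^{\,g} = T^g_*\wt\gamma_{i,j}^{\,0}$, this gives
\begin{align*}
(\wt A_{i,j})_*\wt\gamma_{i,j}^{\,g} = T^g_*\wt\gamma_{i,j}^{\,k_i+k_j} = \wt\gamma_{i,j}^{\,g+k_i+k_j}.
\end{align*}

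Next, I would substitute this into \eqref{eqn:mixed_atom}. The normalizing constant $\sqrt{2/d}\,t^{k_i+k_j}/((1-t^{k_i})(1-t^{k_j}))$ is a scalar, so it passes through unchanged. I then reindex the sum by $g' = g + k_i + k_j$ over $G \cong \zd$; this is legitimate because $t^d = 1$, so exponents are read mod $d$. The result is
\begin{align*}
(\wt A_{i,j})_*\wt\gamma_{i,j} = C\sqrt{2/d}\sum_{g'} \ov t^{\,g'-k_i-k_j}\,\wt\gamma_{i,j}^{\,g'} = t^{k_i+k_j}\,\wt\gamma_{i,j}.
\end{align*}
Here I use $\ov t = t^{-1}$, since $t$ is a root of unity.

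Finally, applying $\eta$ together with $\rho_t(\sigma)\eta(\gamma) = \eta(\wt\sigma_*\gamma)$ yields $\rho_t(A_{i,j})\omega_{i,j} = t^{k_i+k_j}\omega_{i,j}$. The hypothesis $t\notin\mu(e_i)\cup\mu(e_j)$ is equivalent to $t^{k_i}\neq1$ and $t^{k_j}\neq1$. It is needed for two reasons: it makes the normalizing denominators nonzero, and, by Proposition~\ref{Prop:Dim_Disk_2_branch_points}, it guarantees that $\omega_{i,j}$ is a nonzero generator of the one-dimensional space $H^1_c(\wt D_{i,j})_t$, so the statement has content.

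The only delicate point is the sign and orientation bookkeeping, and that was already handled in the preceding computation. There, $R_{2\pi}$ fixes $\gamma_{i,j}$ as a parametrized curve, so no orientation reversal occurs (contrast the $-1$ in the Artin case). Also, $k_\alpha = -(k_i+k_j)$ comes from $\alpha$ encircling both branch points clockwise, which shifts the sheet index by $+(k_i+k_j)$.

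So I take that pushforward as given and would only double-check one thing: that the reindexing produces $t^{+(k_i+k_j)}$ and not its inverse. This follows from the convention that the coefficient of $\wt\gamma^{\,g}$ is $\ov t^{\,g}$, consistent with $T_*\wt\gamma = t\,\wt\gamma$.
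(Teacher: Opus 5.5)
Your proposal is correct and takes essentially the same route as the paper. You start from the base-lift pushforward $(\wt{R_{2\pi}})_*(\wt\gamma_{i,j}^{\,0}) = \wt\gamma_{i,j}^{\,k_i+k_j}$, carry it through the $\ov t$-weighted sum \eqref{eqn:mixed_atom}, and transfer the result to cohomology via Poincar\'e duality. You also spell out two steps the paper's one-line proof leaves implicit: commutation of the lift with $T$ and the reindexing $g \mapsto g+k_i+k_j$.
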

\begin{proof}
Recall the definition of the eigenspace lift from
	\eqref{eqn:mixed_atom}. The evaluation
$(\wt{R_{2\pi}})_*(\wt\gamma_{i,j}^0) = \wt\gamma_{i,j}^{k_i+k_j}$
on the base homology class directly yields the required
action on cohomology.
\end{proof}
\end{minipage}\hfill
\begin{minipage}[c]{0.32\textwidth}
	\centering
	\includegraphics[width=\linewidth]{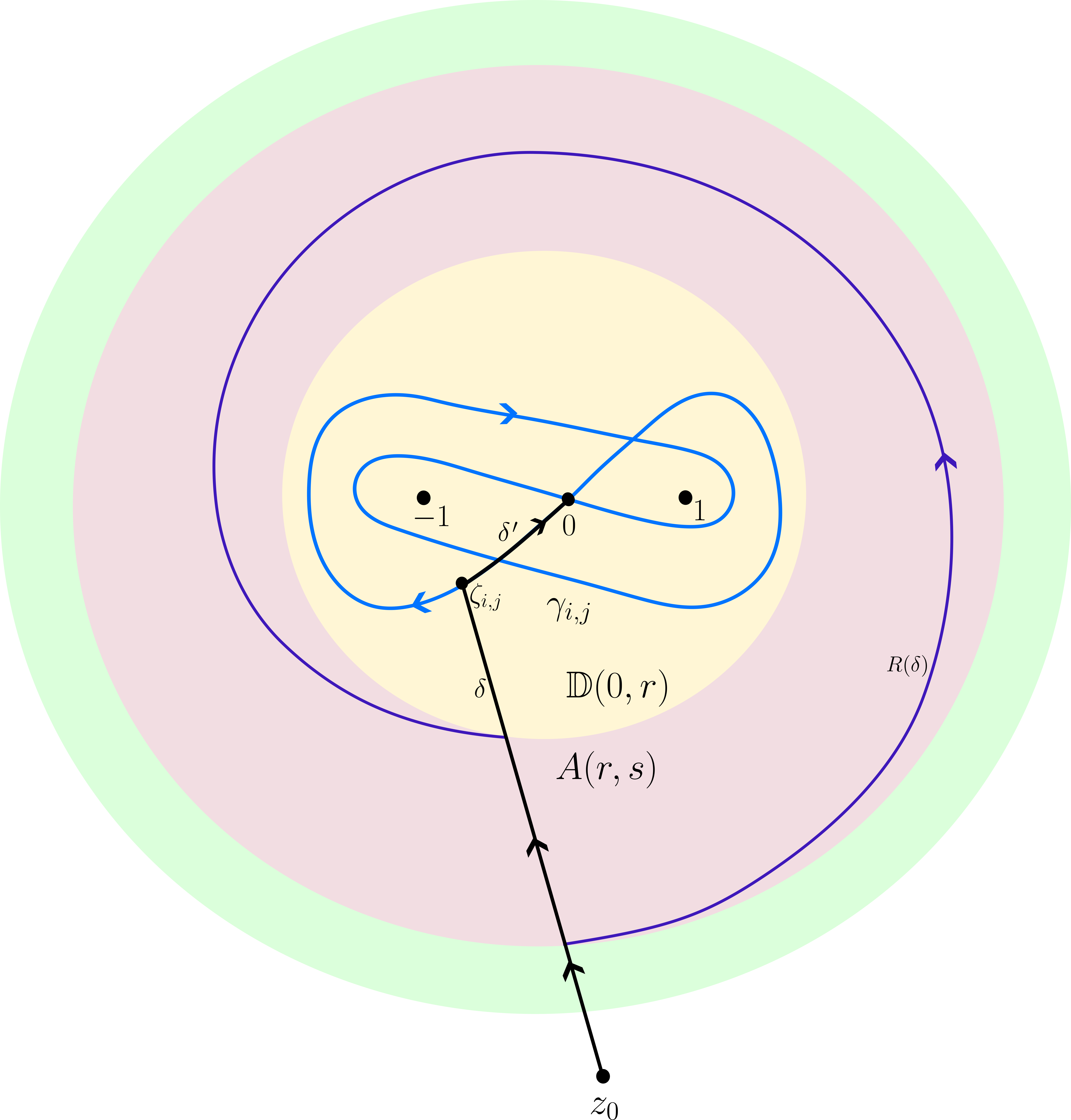}
\end{minipage}

\vspace{0.5cm}

\subsection{Action on intersecting classes}
\label{subsection:action_adjacent_classes}

Let $\sigma \in \bnp$ be a generator with
support disk $\dd_\sigma$ and
corresponding class $\omega_\sigma$, and
let $\omega$ be a spanning class whose
support disk $\dd$ satisfies
$\dd_\sigma \cap \dd \neq \emptyset$.

Choose a closed differential form
representing $\omega$ and denote it again
by $\omega$.  The canonical lift
$\wt\sigma$ restricts to the identity
outside a compact subset
$K \subset \wt\dd_\sigma$, so
$(\wt\sigma^{-1})^* \omega = \omega$ on
the complement of $K$.  Hence the
difference
$(\wt\sigma^{-1})^* \omega - \omega$
is a closed $1$-form with compact
support in $\wt\dd_\sigma$.  Its
cohomology class
$\rho_t(\sigma)(\omega) - \omega$
therefore lies in
$H^1_c(\wt\dd_\sigma)_t$. By
Proposition~\ref{Prop:Dim_Disk_2_branch_points},
this space has dimension at most $1$.

\begin{lemma}\label{lem:invisible_action}
If at least one branch point of
$\dd_\sigma$ lies in a $t$-invisible
block, then
$H^1_c(\wt\dd_\sigma)_t = 0$ by
Proposition~\ref{Prop:Dim_Disk_2_branch_points},
so $\omega_\sigma = 0$.
Consequently,
$\rho_t(\sigma)(\omega) = \omega$
for every $\omega$, i.e.\
$\rho_t(\sigma) = \id$.
\end{lemma}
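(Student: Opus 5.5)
The argument combines the observation recorded just before the statement with Proposition~\ref{Prop:Dim_Disk_2_branch_points}. Let $\omega \in H^1(X)_t$ be arbitrary, represented by a closed $1$-form also denoted $\omega$.

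First, I would show that $\rho_t(\sigma)(\omega)-\omega$ lies in the image of $\iota_!$ on $H^1_c(\wt\dd_\sigma)$. The canonical lift $\wt\sigma$ (Remark~\ref{rem:canonical_lift}) is the identity outside a compact set $K\subset\wt\dd_\sigma$. Hence the form $(\wt\sigma^{-1})^*\omega-\omega$ is closed and vanishes outside $K$. It therefore defines a class $c\in H^1_c(\wt\dd_\sigma)$ with $\iota_!(c)=\rho_t(\sigma)(\omega)-\omega$.

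Next, I would check that $c$ can be taken in the $t$-eigenspace. Both $\omega$ and $(\wt\sigma^{-1})^*\omega$ are $(T^{-1})^*$-eigenvectors with eigenvalue $t$, the latter because $\wt\sigma$ commutes with $T$ (Proposition~\ref{prop:lift-commutes-T}). The set $\wt\dd_\sigma$ is $T$-invariant, so $T$ acts on $H^1_c(\wt\dd_\sigma)$ and $\iota_!$ is $T$-equivariant. The minor subtlety is that the form-level difference need not itself be an eigenvector. To handle this, I would average over the deck group: apply the projector $\frac1d\sum_{g}\ov t^{\,g}\,(T^{-g})^*$ to the compactly supported form. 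This keeps the support inside the $T$-invariant set $\bigcup_g T^g(K)\subset\wt\dd_\sigma$. It also leaves the global class unchanged, since that class already lies in $H^1(X)_t$. The result is that $\rho_t(\sigma)(\omega)-\omega=\iota_!(c_t)$ with $c_t\in H^1_c(\wt\dd_\sigma)_t$.

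Finally, I would invoke the invisibility hypothesis. Suppose a branch point of $\dd_\sigma$ lies in a block $P_i$ with $t^{k_i}=1$. For an \artindisk\ both points lie in $P_i$. For a \loopdisk\ at least one endpoint block satisfies $t^{k}=1$. In either case Proposition~\ref{Prop:Dim_Disk_2_branch_points} gives $H^1_c(\wt\dd_\sigma)_t=0$. Hence $\omega_\sigma=0$, since by definition it is dual to a cycle lying in this zero local eigenspace. It also forces $c_t=0$, so $\rho_t(\sigma)(\omega)=\omega$. As $\omega$ was arbitrary, $\rho_t(\sigma)=\id$.

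The main point needing care is the equivariant averaging step, and it is routine. No injectivity of $\iota_!$ (Lemma~\ref{lem:global_injectivity}) is needed, because the local eigenspace is zero.
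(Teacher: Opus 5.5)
Your proof is correct and follows the paper's route. The paragraph preceding the lemma observes that $(\wt\sigma^{-1})^*\omega-\omega$ is a closed form compactly supported in $\wt\dd_\sigma$, so $\rho_t(\sigma)(\omega)-\omega$ comes from $H^1_c(\wt\dd_\sigma)_t$, and that space vanishes by Proposition~\ref{Prop:Dim_Disk_2_branch_points}. Your deck-group averaging step spells out the eigenspace bookkeeping that the paper takes for granted.
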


The space $H^1_c(\wt\dd_\sigma)_t$ is spanned by
$\omega_\sigma$. Consequently,
\begin{align}\label{eqn:action_formula}
	\rho_t(\sigma)(\omega)
	= \omega + \nu(\omega)\omega_\sigma.
\end{align}
Assume $\dd_\sigma$ contains no branch point in a
$t$-invisible block, which guarantees 
$\omega_\sigma \neq 0$.

By
Lemmas~\ref{lem:artin_self_action}
and~\ref{lem:mixed_self_action},
$\sigma$ acts on its own class by
$\rho_t(\sigma)(\omega_\sigma)
= \lambda_\sigma \omega_\sigma$, where
$\lambda_\sigma
= -t^{k_{\pt(i)}}$ for an Artin
generator and
$\lambda_\sigma
= t^{k_i+k_j}$ for a loop generator.

The action $\rho_t(\sigma)$ preserves
the intersection form:
\begin{align*}
	\langle \omega, \omega_\sigma \rangle
	&= \langle \rho_t(\sigma)\omega,
	   \rho_t(\sigma)\omega_\sigma \rangle \\
	&= \langle \omega +
	   \nu(\omega)\omega_\sigma,
	   \lambda_\sigma\omega_\sigma \rangle \\
	&= \ov\lambda_\sigma
	   \langle \omega, \omega_\sigma \rangle
	+ \ov\lambda_\sigma \nu(\omega)
	   \langle \omega_\sigma,
	   \omega_\sigma \rangle.
\end{align*}
Multiplying by $\lambda_\sigma$ and rearranging yields
\begin{equation}\label{eqn:unitarity_relation}
	\nu(\omega) \langle \omega_\sigma, \omega_\sigma \rangle
	= (\lambda_\sigma - 1)\langle \omega, \omega_\sigma \rangle.
\end{equation}
Let
\begin{align*}
	R=\zz[\sqrt{-1}][t^{\pm1}]
	\bigl[(1-t^{k_i})^{-1}\;:\;i\notin\mathcal{I}_t\bigr].
\end{align*}
Every element of $R$ is regular at $t$, because $t\neq0$ and $t^{k_i}\neq1$
for $i\notin\mathcal{I}_t$. We compute $\nu(\omega)$ without using
\eqref{eqn:unitarity_relation}, which is vacuous when
$\langle\omega_\sigma,\omega_\sigma\rangle=0$, and show that it lies in $R$.

We use Poincar\'e duality and suppress $\eta$. Let $\gamma$ be the base
contour for $\omega$. The lift $\wt\sigma$ commutes with $T$ and fixes
$\wt z_0^{\,0}$, so $(\wt\sigma)_*\wt\gamma$ is the $t$-eigenspace cycle of
the contour $\sigma(\gamma)$, formed with the same constant $C_\gamma$ and
with lifts
$\wt{\sigma(\gamma)}^{\,g}=T^g_*\bigl((\wt\sigma)_*\wt\gamma^{\,0}\bigr)$.

Let $\omega'$ be a spanning class whose support disk meets $\dd_\sigma$, with
base contour $\gamma'$. Such a class exists when $\dim H^1(X)_t>1$. If
$\dim H^1(X)_t=1$, then $\omega$ is a multiple of $\omega_\sigma$ and
\eqref{eqn:action_formula} holds by
Lemmas~\ref{lem:artin_self_action} and~\ref{lem:mixed_self_action}. Pairing
\eqref{eqn:action_formula} with $\omega'$ gives
\begin{align}\label{eqn:nu_by_pairing}
	\nu(\omega)\,\langle\omega_\sigma,\omega'\rangle
	=\langle\rho_t(\sigma)(\omega),\omega'\rangle
	 -\langle\omega,\omega'\rangle .
\end{align}
By Lemma~\ref{lem:reduction_formula} and \eqref{eqn:crossing_sum}, applied to
$(\gamma,\gamma')$ and to $(\sigma(\gamma),\gamma')$, both terms on the right
of \eqref{eqn:nu_by_pairing} lie in $R$ and are independent of $d$.

By Propositions~\ref{prop:spanning_intersections}
and~\ref{prop:general_mixed_intersections},
$\langle\omega_\sigma,\omega'\rangle=\pm\sqrt{-1}\,(1-t^{\pm k})^{-1}$, a unit
of $R$. Dividing \eqref{eqn:nu_by_pairing},
\begin{align*}
	\nu(\omega)
	=\frac{\langle\rho_t(\sigma)(\omega),\omega'\rangle
	 -\langle\omega,\omega'\rangle}
	 {\langle\omega_\sigma,\omega'\rangle}
	\in R,
\end{align*}
again independent of $d$.

Regard $t$ as an indeterminate. Unitarity gives
\eqref{eqn:unitarity_relation} at every visible $t\in\mu'(d')$ with
$\langle\omega_\sigma,\omega_\sigma\rangle\neq0$, for each $d'$ with
$\gcd(d',k_1,\dots,k_m)=1$. Both sides are values of elements of $R$
independent of $d'$. Take $d'=dp$ with $p$ prime and $p>2\sum_r k_r$, and let
$t$ have order $p$. Let $k$ be any $k_i$ or any sum $k_i+k_j$. Then $0<2k<p$,
so $p$ divides neither $k$ nor $2k$, and therefore $t^k\neq1$ and $t^k\neq-1$.
With $k=k_i$ this gives $t^{k_i}\neq1$ for all $i$, so no block is
$t$-invisible. With $k=k_{\pt(i)}$ and $k=k_i+k_j$ it rules out both cases of
Remark~\ref{rem:isotropic}, so
$\langle\omega_\sigma,\omega_\sigma\rangle\neq0$. As $p$ varies these $t$ are
infinite in number, while a non-zero element of $R$ has finitely many zeros.
Hence \eqref{eqn:unitarity_relation} holds in $R$. The element
$\langle\omega_\sigma,\omega_\sigma\rangle$ takes a non-zero value at the $t$
above, so it is non-zero in $R$. It occurs on the left of
\eqref{eqn:unitarity_relation} only. Since $R$ is a domain, we may divide by
it in the fraction field of $R$. The resulting coefficient
\begin{align}
	\Lambda_\sigma = \frac{\lambda_\sigma - 1}{\langle \omega_\sigma, \omega_\sigma \rangle}
	\label{eqn:Lambda_sigma}
\end{align}
evaluates to
\begin{align*}
	\Lambda_\sigma = \begin{cases}
		           \sqrt{-1}\,(1-t^{k_{\pt(i)}})
		           & \text{if } \sigma = \sigma_i \text{ is an Artin generator}, \\[4pt]
		           \sqrt{-1}\,(1-t^{k_i})(1-t^{k_j})
		           & \text{if } \sigma = A_{i,j} \text{ is a loop generator}.
	           \end{cases}
\end{align*}
In both cases the quotient is a polynomial, so $\Lambda_\sigma$ lies in $R$.
In particular $\Lambda_\sigma$ is defined also where
$\langle\omega_\sigma,\omega_\sigma\rangle=0$. Hence
\begin{align*}
	\nu(\omega)
	=\Lambda_\sigma
	\langle\omega,\omega_\sigma\rangle
\end{align*}
in $R$, and therefore at every visible $t\in\mu'(d)$.

\begin{lemma}[Action on adjacent spanning classes]
	\label{lem:adjacent_spanning_class_action}
Let $\sigma$ be a generator with associated spanning class
$\omega_\sigma$, and let $\Lambda_\sigma$ be as in
	\eqref{eqn:Lambda_sigma}. For any adjacent spanning class
$\omega_i$, the action of $\sigma$ on $\omega_i$ is given by
	\begin{align*}
		\rho_t(\sigma)(\omega_i)
		= \omega_i + \Lambda_\sigma \langle \omega_i, \omega_\sigma \rangle \omega_\sigma.
	\end{align*}
\end{lemma}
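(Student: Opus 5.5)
The lemma is essentially a repackaging of the computation immediately preceding it, so the plan is to assemble the pieces in order and handle the degenerate cases separately.

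\emph{Step 1: degenerate supports.} First I dispose of the case where $\dd_\sigma$ contains a branch point of a $t$-invisible block. Then Lemma~\ref{lem:invisible_action} gives $\rho_t(\sigma)=\id$ and $\omega_\sigma=0$, so both sides equal $\omega_i$.

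\emph{Step 2: disjoint supports.} Next, if the support disk $\dd$ of $\omega_i$ is disjoint from $\dd_\sigma$, the canonical lift $\wt\sigma$ is the identity on $\wt\dd$, so $\rho_t(\sigma)(\omega_i)=\omega_i$. On the other side, the lifted contours are disjoint, so by Lemma~\ref{lem:reduction_formula} we get $\langle\omega_i,\omega_\sigma\rangle=0$ (this is the ``otherwise'' entry of Propositions~\ref{prop:spanning_intersections} and~\ref{prop:general_mixed_intersections}). The formula therefore holds trivially.

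\emph{Step 3: supports that meet.} In the remaining case, $\dd\cap\dd_\sigma\ne\emptyset$ and $\omega_\sigma\neq0$. The difference $(\wt\sigma^{-1})^*\omega_i-\omega_i$ is compactly supported in $\wt\dd_\sigma$. Hence it lies in the one-dimensional space $H^1_c(\wt\dd_\sigma)_t$, which is spanned by $\omega_\sigma$ (Proposition~\ref{Prop:Dim_Disk_2_branch_points}). This gives \eqref{eqn:action_formula} with some scalar $\nu(\omega_i)$.

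\emph{Step 4: identifying $\nu(\omega_i)$.} I then invoke the argument just given: \eqref{eqn:nu_by_pairing} shows $\nu(\omega_i)\in R$ and that it is independent of $d$. Unitarity yields \eqref{eqn:unitarity_relation} at infinitely many roots of unity of large prime order $p$, where $\langle\omega_\sigma,\omega_\sigma\rangle\neq0$. Since $R$ is a domain, the identity $\nu(\omega_i)=\Lambda_\sigma\langle\omega_i,\omega_\sigma\rangle$ holds in $R$. Specializing to the given $t$ gives the claim, including the isotropic case. When $\dd=\dd_\sigma$, the formula reduces to $\rho_t(\sigma)\omega_\sigma=(1+\Lambda_\sigma\langle\omega_\sigma,\omega_\sigma\rangle)\omega_\sigma=\lambda_\sigma\omega_\sigma$, consistent with Lemmas~\ref{lem:artin_self_action} and~\ref{lem:mixed_self_action}.

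\emph{Step 5: explicit value of $\Lambda_\sigma$.} To confirm the polynomial values, I divide $\lambda_\sigma-1$ by the self-pairings of Proposition~\ref{prop:spanning_intersections}:
\begin{align*}
\frac{-t^{k}-1}{\sqrt{-1}\,(1+t^{k})/(1-t^{k})}&=\sqrt{-1}\,(1-t^{k}),
\end{align*}
and, using the factorization from Subcase~1b, $\dfrac{t^{k_i+k_j}-1}{\langle\omega_{i,j},\omega_{i,j}\rangle}=\sqrt{-1}\,(1-t^{k_i})(1-t^{k_j})$.

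\emph{Main obstacle.} The delicate point is Step~4 in the isotropic case. There, unitarity alone does not determine $\nu$, and one must carefully justify the specialization argument. This requires two facts:
\begin{itemize}
\item each pairing is given by a $d$-independent Laurent expression via \eqref{eqn:crossing_sum};
\item the degree-$dp$ covers realize the same $\rho_t$ compatibly.
\end{itemize}
The second fact follows from the $\pi^*$ equivariance in Subsection~\ref{subsection:representation_on_cohomology}, using the rescaling remark to track the form. I would also check that a companion class $\omega'$ with $\langle\omega_\sigma,\omega'\rangle$ a unit exists. If none exists, then $\dim H^1(X)_t=1$ and $\omega_i$ is a multiple of $\omega_\sigma$, which is already covered.
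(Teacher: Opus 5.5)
Your proposal is correct and follows the paper's own argument. One-dimensionality of $H^1_c(\wt\dd_\sigma)_t$ gives $\rho_t(\sigma)(\omega_i)=\omega_i+\nu(\omega_i)\omega_\sigma$, pairing with a companion class shows $\nu(\omega_i)\in R$ independently of the degree, and the unitarity relation at roots of unity of large prime order on degree-$dp$ covers lets you divide by $\langle\omega_\sigma,\omega_\sigma\rangle$ in $R$ and then specialize, which also covers the isotropic case. The $\pi^*$-equivariance you cite is not actually needed, because the auxiliary points are new eigenvalues of order $p$ on $X_{dp}$ rather than the original $t$; only the degree-independence of the crossing sums (your first bullet) is used.
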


\subsection{The representation theorem}
\label{subsection:representation_theorem}

We now assemble the representation $\rho_t$ from the
local computations above.  By
Lemma~\ref{lem:invisible_action}, a generator acts
as the identity whenever its support disk meets a
$t$-invisible block (justifying the term).  We may
therefore restrict to generators with no
$t$-invisible branch points in their support.  For
such a generator, the action on its own class is
given by Lemmas~\ref{lem:artin_self_action}
and~\ref{lem:mixed_self_action}, and the action on
any intersecting spanning class by
Lemma~\ref{lem:adjacent_spanning_class_action}.
Spanning classes with disjoint support are fixed.
These three cases determine $\rho_t$ on the
spanning classes.

\begin{thmA}[The Irreducible Representation of $\bnp$]\label{thm:main}
Let $t \in \mu'(d)$ be a non-trivial $d$-th root of unity, and let
$\rho_t \colon \bnp \to \Ugrp(H^1(X)_t, \langle\cdot,\cdot\rangle)$
be the monodromy representation. For a generator $\sigma$ of $\bnp$,
let $\omega_\sigma$ be the associated class supported in the support
disk of $\sigma$. Then
	\begin{align}\label{eqn:main_representation_non_isotropic}
		\rho_t(\sigma)(\omega)
		= \omega + \Lambda_\sigma \langle\omega,\omega_\sigma\rangle\,\omega_\sigma
	\end{align}
for all $\omega \in H^1(X)_t$, where the coefficient
	\begin{align*}
		\Lambda_\sigma
		=\frac{\lambda_\sigma-1}
		{\langle\omega_\sigma,\omega_\sigma\rangle}
	\end{align*}
evaluates to the polynomial
	\begin{align*}
		\Lambda_\sigma = \begin{cases}
			           \sqrt{-1}\,(1-t^{k_{\pt(i)}})
			           & \text{if } \sigma = \sigma_i \text{ is an Artin generator}, \\[4pt]
			           \sqrt{-1}\,(1-t^{k_i})(1-t^{k_j})
			           & \text{if } \sigma = A_{i,j} \text{ is a loop generator}.
		           \end{cases}
	\end{align*}
Moreover, $\rho_t$ is irreducible.
\end{thmA}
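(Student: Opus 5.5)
The plan has two parts: the reflection formula, then irreducibility.

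\emph{The reflection formula.} Most of the work is already done in Section~\ref{subsection:action_adjacent_classes}. If the support disk $\dd_\sigma$ meets a $t$-invisible block, then $\omega_\sigma=0$ and $\rho_t(\sigma)=\id$ by Lemma~\ref{lem:invisible_action}, so \eqref{eqn:main_representation_non_isotropic} holds trivially. Otherwise, for every $\omega$ the difference $\rho_t(\sigma)\omega-\omega$ lies in the line $H^1_c(\wt\dd_\sigma)_t=\cc\,\omega_\sigma$. Hence $\rho_t(\sigma)\omega=\omega+\nu(\omega)\omega_\sigma$ with $\nu$ linear, and it suffices to verify the formula on a spanning set, which Proposition~\ref{thm:spanning_via_intersection} provides. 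I split the spanning classes into three groups.
\begin{itemize}
\item Classes with support disjoint from $\dd_\sigma$ are fixed by $\sigma$ and pair to zero with $\omega_\sigma$, so both sides of the formula agree.
\item For $\omega_\sigma$ itself (a spanning class, or the bridge class across an invisible run), I use Lemmas~\ref{lem:artin_self_action} and~\ref{lem:mixed_self_action}. They give $\lambda_\sigma\omega_\sigma=\omega_\sigma+\Lambda_\sigma\langle\omega_\sigma,\omega_\sigma\rangle\omega_\sigma$ directly from the definition of $\Lambda_\sigma$. A quick computation from Proposition~\ref{prop:spanning_intersections} confirms the polynomial values: $(-t^k-1)\big/\bigl(\sqrt{-1}(1+t^k)/(1-t^k)\bigr)=\sqrt{-1}(1-t^k)$, and $(t^{k_i+k_j}-1)$ divided by the loop self-pairing $-\sqrt{-1}(1-t^{k_i+k_j})/((1-t^{k_i})(1-t^{k_j}))$ gives $\sqrt{-1}(1-t^{k_i})(1-t^{k_j})$. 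In the isotropic case both sides are polynomial identities in $R$, so they remain valid.
\item Intersecting classes are covered by Lemma~\ref{lem:adjacent_spanning_class_action}.
\end{itemize}
For a general loop generator $A_{i,j}$ whose class is not a spanning class, I additionally use Proposition~\ref{prop:mixed_linear_combination} together with the pairings of Proposition~\ref{prop:general_mixed_intersections}. The extension from the spanning set to all of $H^1(X)_t$ then follows by linearity of both sides, since $\langle\cdot,\omega_\sigma\rangle$ is linear in the first argument.

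\emph{Irreducibility.} Let $W\subset H^1(X)_t$ be a nonzero invariant subspace. For each visible generator $\sigma$, the operator $\rho_t(\sigma)-\id$ has image contained in $\cc\,\omega_\sigma$ and is nonzero, because its eigenvalue on $\omega_\sigma$ is $\lambda_\sigma-1\neq0$. The latter holds since $t^{k}\neq-1$ in the Artin case and $t^{k_i+k_j}\neq1$ in the loop case, whenever the class is non-isotropic. In the isotropic case I instead argue via the nonzero pairing with a neighbour.

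Take $0\neq w\in W$. Because the form is non-degenerate and the $\omega_\sigma$ span, some generator has $\langle w,\omega_\sigma\rangle\neq0$. Then $\rho_t(\sigma)w-w=\Lambda_\sigma\langle w,\omega_\sigma\rangle\omega_\sigma\in W$, and $\Lambda_\sigma\neq0$ because all relevant blocks are visible; so $\omega_\sigma\in W$. Next I propagate along the tridiagonal chain. Proposition~\ref{prop:spanning_intersections} gives $\langle\omega_{i+1},\omega_i\rangle=\ov{-\sqrt{-1}/(1-t^{k_r})}\neq0$, which I read as the nonvanishing of $\langle\omega_{i+1},\omega_i\rangle$. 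Applying the generator of $\omega_i$ to $\omega_{i+1}$ (or the reverse) then shows that neighbouring classes also lie in $W$. At invisible runs the bridge classes play the role of neighbours, using Proposition~\ref{prop:general_mixed_intersections}. By connectivity of the chain, every spanning class lies in $W$, so $W=H^1(X)_t$.

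\emph{Main obstacle.} The delicate point is ensuring that each spanning class, including the bridge classes $\omega_{i-1,j+1}$, is actually $\omega_\sigma$ for some generator $\sigma\in\bnp$ acting nontrivially. Bridge classes correspond to loop generators $A_{i-1,j+1}$, whose disk passes below the invisible points. I would check this against the definition $D_{i,j}=\beta_{i,j}(D_{h_i})$ and the base-point conventions, so that the intersection signs match the tables used above.
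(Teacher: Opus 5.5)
Your proposal is correct and follows the paper's own proof of Theorem~A step for step. You verify the rank-one formula on the spanning set: disjoint classes, $\omega_\sigma$ itself via Lemmas~\ref{lem:artin_self_action} and~\ref{lem:mixed_self_action}, and intersecting classes via Lemma~\ref{lem:adjacent_spanning_class_action}. You then prove irreducibility by using non-degeneracy to put one spanning class into $W$ and propagating along the chain of non-zero neighbour pairings. Your ``main obstacle'' is settled by construction: the bridge class $\omega_{i-1,j+1}$ is defined from the Pochhammer contour on $D_{i-1,j+1}$, so it is $\omega_\sigma$ for $\sigma=A_{i-1,j+1}$.

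There are two small slips. First, the loop self-pairing of Proposition~\ref{prop:spanning_intersections} simplifies to $+\sqrt{-1}\,(1-t^{k_i+k_j})/\bigl((1-t^{k_i})(1-t^{k_j})\bigr)$; with your minus sign the quotient would come out as $-\sqrt{-1}(1-t^{k_i})(1-t^{k_j})$. Second, in the propagation step it is the generator of $\omega_{i+1}$ applied to $\omega_i\in W$ (your ``reverse'' option) that places $\omega_{i+1}$ in $W$.
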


\begin{proof}
We verify \eqref{eqn:main_representation_non_isotropic} on
the spanning set of classes. For the associated class
$\omega_\sigma$, the formula yields
		\begin{align*}
			\rho_t(\sigma)(\omega_\sigma)
			=\bigl(1+\Lambda_\sigma
			\langle\omega_\sigma,\omega_\sigma\rangle\bigr)
			\omega_\sigma
			=\lambda_\sigma\omega_\sigma,
		\end{align*}
matching Lemmas~\ref{lem:artin_self_action} and
	\ref{lem:mixed_self_action}. For an adjacent class
$\omega_i$, the formula recovers
Lemma~\ref{lem:adjacent_spanning_class_action}. For a
class $\omega_i$ with disjoint support,
$\langle \omega_i, \omega_\sigma \rangle = 0$ implies
$\rho_t(\sigma)(\omega_i) = \omega_i$, matching the
identity action. This proves the formula on the
spanning set and hence on $H^1(X)_t$.
Substitution into
		\begin{align*}
			\frac{\lambda_\sigma-1}
			{\langle\omega_\sigma,\omega_\sigma\rangle}
		\end{align*}
gives the stated value of $\Lambda_\sigma$.

For irreducibility, let
$0\neq W\subseteq H^1(X)_t$ be invariant, and choose
$0\neq\omega\in W$. The Hermitian form on
$H^1(X)_t$ is non-degenerate, and the classes
$\{\omega_i\}$ span. Hence there is an index $i$
such that
	\begin{align*}
		\langle\omega,\omega_i\rangle\neq0.
	\end{align*}
Let $\sigma$ be the generator associated with
$\omega_i$. Visibility gives $\Lambda_\sigma\neq0$,
so
	\begin{align*}
		\rho_t(\sigma)(\omega)-\omega
		=\Lambda_\sigma
		\langle\omega,\omega_i\rangle\omega_i
		\in W.
	\end{align*}
Therefore $\omega_i\in W$.

The graph whose vertices are the spanning classes and
whose edges record non-zero pairings is connected. Its
consecutive off-diagonal pairings are non-zero because
all surviving endpoint blocks are $t$-visible. If
$\omega_j\in W$ and
$\langle\omega_j,\omega_k\rangle\neq0$, applying the
generator associated with $\omega_k$ gives
$\omega_k\in W$. Connectivity therefore places every
spanning class in $W$. Since these classes span
$H^1(X)_t$, we conclude $W=H^1(X)_t$.
\end{proof}

\begin{remark}[Factorization for invisible blocks]
	\label{rem:factorization_invisible}
Let $t \in \mu'(d)$. The notations
$n_t$, $\mathcal{P}_t$ and the forgetful
homomorphism
$\forgmap_t \colon \bnp \to
B_{n_t, \mathcal{P}_t}$ are defined in
Subsection~\ref{subsection:mixed_braid_group_definition}
	(see \eqref{eqn:forgmap_t} and the kernel
description there).
By \hyperref[thm:main]{Theorem~A},
$t^{k_i}=1$ forces $\Lambda_\sigma = 0$ for
every such generator, hence
$\rho_t(\sigma) = \id$.
By Subsection~\ref{subsection:mixed_braid_group_definition},
$\ker(\forgmap_t)$ is generated as a normal subgroup by these
generators.
Since $\ker(\rho_t)$ is normal,
$\ker(\forgmap_t) \subseteq \ker(\rho_t)$,
and $\rho_t$ factors uniquely through
$\forgmap_t$:
	\begin{equation*}
		\begin{tikzcd}
			\bnp \arrow[r, "\forgmap_t"]
			\arrow[rd, "\rho_t"'] &
B_{n_t, \mathcal{P}_t}
			\arrow[d, "\bar{\rho}_t"] \\
			& \Ugrp(H^1(X)_t)
		\end{tikzcd}
	\end{equation*}
The factored representation $\bar\rho_t$
acts on $H^1(X)_t$ with
$\dim H^1(X)_t = n_t - 1$
	(when $t^{k_\infty} \neq 1$).
\end{remark}

\begin{proposition}
	\label{prop:bar_rho_geometric}
Let $e_t = \operatorname{ord}(t)$.
The representation $\bar{\rho}_t$ is the
geometric monodromy representation of
$B_{n_t, \mathcal{P}_t}$ acting on
$H^1(X_{vis})_t$, where $X_{vis} = X/K$
is the cyclic cover of $\pp^1$ of degree
$e_t$ branched only over the visible
blocks. Here
$K = \langle T^{e_t} \rangle \subset G$
is the subgroup of the deck group that
acts trivially on $H^1(X)_t$.
\end{proposition}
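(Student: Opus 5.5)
The plan is to identify $X_{vis}$ concretely as an intermediate quotient of $X$. Then we compare the two actions on cohomology through the pullback along the quotient map, in the same way as in the degree-independence argument of Subsection~\ref{subsection:representation_on_cohomology}.

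\emph{Step 1: the quotient.} The element $T^{e_t}$ sends $w\mapsto \zeta^{e_t}w$, so $u:=w^{d/e_t}$ is $K$-invariant. Hence $X/K$ is the normalization and compactification of
\begin{align*}
u^{e_t}=\prod_{i=1}^n (z-b_i)^{k_{\pt(i)}},
\end{align*}
with quotient map $\pi(z,w)=(z,w^{d/e_t})$ and deck generator $\bar T$ satisfying $\pi\circ T=\bar T\circ\pi$.
\begin{itemize}
\item The local monodromy at $b_i$ is $k_{\pt(i)}\bmod e_t$. This vanishes exactly when $t^{k_{\pt(i)}}=1$, so $X/K$ is unbranched over the $t$-invisible points. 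It is branched over the visible ones, and over $\infty$ iff $t^{k_\infty}\neq1$.
\item The invisible exponents are divisible by $e_t$. Therefore $\gcd(e_t,\text{visible }k_r)=\gcd(e_t,k_1,\dots,k_m)$, which divides $\gcd(d,k_1,\dots,k_m)=1$. So $X_{vis}$ is connected and is a genuine cyclic cover of the type in \eqref{eqn:GeneralCyclicCover} for the partition $\mathcal P_t$.
\item Since $(T^{-1})^*$ acts on $H^1(X)_t$ by $t$, the group $K$ acts by $t^{e_t}=1$. Thus $K$ acts trivially on $H^1(X)_t$, and $H^1(X)_t\subset H^1(X)^K=\pi^*H^1(X_{vis})$ (standard transfer for finite quotients, over $\cc$).
\item Comparing eigenvalues, $\pi^*$ restricts to an isomorphism $H^1(X_{vis})_t\xrightarrow{\sim}H^1(X)_t$. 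As a check, both sides have dimension $n_t-1$ or $n_t-2$ by Proposition~\ref{prop:dim_cyclic_cover}.
\end{itemize}

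\emph{Step 2: equivariance.} For $\sigma\in\bnp$, the canonical lift $\wt\sigma$ commutes with $T$ (Proposition~\ref{prop:lift-commutes-T}), hence with $K$. So it descends to a homeomorphism $\bar\sigma$ of $X_{vis}$ lifting $\sigma$. This $\bar\sigma$ is the identity near $\pi(\wt\infty)$, so by uniqueness (Proposition~\ref{prop:unique-lift-near-infty}, applied to $X_{vis}$) it is the canonical lift of $\sigma$ to $X_{vis}$. It follows that $\pi^*$ intertwines $\rho_t(\sigma)$ with $(\bar\sigma^{-1})^*$.

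\emph{Step 3: the main obstacle.} It remains to show that $(\bar\sigma^{-1})^*$ depends only on $\forgmap_t(\sigma)$, and equals the monodromy of $\forgmap_t(\sigma)\in B_{n_t,\mathcal P_t}$ on $X_{vis}$. Over $\cc$ minus the visible points, $X_{vis}$ is an honest covering, and the invisible points are not branch points there. The mapping class $\sigma$ in $\operatorname{Mod}_c(\cc,B')$ maps under the forgetful map to $\forgmap_t(\sigma)$ in $\operatorname{Mod}_c(\cc,B'_{vis})$. So $\sigma$ and a representative of $\forgmap_t(\sigma)$ are isotopic rel the visible points and rel a neighbourhood of $\infty$.
\begin{itemize}
\item By the homotopy lifting property, this compactly supported isotopy lifts to an isotopy on $X_{vis}$ that is fixed near the fiber over $\infty$.
\item The endpoint of the lifted isotopy is therefore the canonical lift of $\forgmap_t(\sigma)$, and isotopic maps induce equal maps on $H^1$.
\item This gives $\bar\rho_t(\forgmap_t(\sigma))=\rho_t(\sigma)$ under the identification $\pi^*$. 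This is consistent with the factorization in Remark~\ref{rem:factorization_invisible}, and surjectivity of $\forgmap_t$ makes $\bar\rho_t$ well defined.
\end{itemize}
The care needed here is to check that the isotopy lift stays canonical near $\infty$ and is compatible with $\bar T$. Both follow because the isotopy is supported in a compact set away from $\dd_\infty$.
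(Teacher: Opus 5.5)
Your proposal is correct and follows the same route as the paper. You pass to $X/K$ via $u=w^{d/e_t}$, observe that the invisible points become unbranched, use that $K$ acts trivially on $H^1(X)_t$ to get $\pi^*\colon H^1(X_{vis})_t\cong H^1(X)_t$, and descend the canonical lifts, which commute with $K$. Your Step~3 spells out what the paper asserts in one line: the homeomorphism $\sigma$ itself represents $\forgmap_t(\sigma)$, and compactly supported isotopies lift to isotopies that stay canonical near $\infty$. As a bonus, this gives the factorization $\rho_t=\bar\rho_t\circ\forgmap_t$ directly, without appealing to Theorem~A or to the kernel description of $\forgmap_t$.
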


\begin{proof}
Since $T$ acts on $H^1(X)_t$ by
multiplication by $t$, every element of
$K = \langle T^{e_t} \rangle$ acts
trivially. The quotient $X_{vis} = X/K$
has deck group
$G/K \cong \zz/e_t\zz$.

For a branch point $b_i$, the local
monodromy $T^{k_{\pt(i)}}$ maps to the
identity in $G/K$ if and only if $e_t$
divides $k_{\pt(i)}$, i.e.\
$t^{k_{\pt(i)}} = 1$. Thus
$b_i$ is unbranched in $X_{vis}$ exactly
when $t^{k_{\pt(i)}} = 1$.
The normalized equation of $X_{vis}$ is
	\begin{align*}
w^{e_t}
		= \prod_{i:\, t^{k_{\pt(i)}} \neq 1}
		(z - b_i)^{k_{\pt(i)}},
	\end{align*}
since the factors with
$e_t \mid k_{\pt(i)}$ are perfect
$e_t$-th powers and are absorbed
into the coordinate.

Because $K$ acts trivially on $H^1(X)_t$,
pullback along $X \to X_{vis}$ restricts
to an isomorphism
$H^1(X_{vis})_t
	\xrightarrow{\sim} H^1(X)_t$.
The canonical lifts of braids in $\bnp$
commute with $K$ and descend to $X_{vis}$.
Under this isomorphism, the monodromy of
$B_{n_t, \mathcal{P}_t}$ on
$H^1(X_{vis})_t$ is $\bar\rho_t$.
\end{proof}

\begin{corollary}
No block of $\mathcal{P}_t$ is $t$-invisible, so $\omega_\sigma\neq0$
for every generator $\sigma$ of $B_{n_t,\mathcal{P}_t}$. The linear
transformation $\bar\rho_t(\sigma)$ acts as a complex reflection when
the associated class $\omega_\sigma$ is non-isotropic, and degenerates
into a unitary transvection
when $\omega_\sigma$ is isotropic.
\end{corollary}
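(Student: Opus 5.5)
The plan is to read everything off Theorem~A, applied to the reduced representation. First, by Remark~\ref{rem:factorization_invisible} and Proposition~\ref{prop:bar_rho_geometric}, $\bar\rho_t$ is the geometric monodromy representation of $B_{n_t,\mathcal{P}_t}$ on $H^1(X_{vis})_t\cong H^1(X)_t$. The cover $X_{vis}$ is branched only over the visible blocks, so by construction every block of $\mathcal{P}_t$ satisfies $t^{k_i}\neq 1$. Proposition~\ref{Prop:Dim_Disk_2_branch_points} then gives $\dim H^1_c(\wt\dd_\sigma)_t=1$ for every support disk of a generator $\sigma$ of $B_{n_t,\mathcal{P}_t}$. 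By Lemma~\ref{lem:global_injectivity}, the map $\iota_!$ is injective whenever $H^1(X)_t\neq0$, so $\omega_\sigma\neq0$. (When $H^1(X)_t=0$ the statement is vacuous.)

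Next I would apply Theorem~A, whose proof is intrinsic to the geometric cover and so applies verbatim to $X_{vis}$. It gives $\bar\rho_t(\sigma)(\omega)=\omega+\Lambda_\sigma\langle\omega,\omega_\sigma\rangle\omega_\sigma$. Visibility makes $\Lambda_\sigma$ nonzero: it equals $\sqrt{-1}(1-t^{k_{\pt(i)}})$ or $\sqrt{-1}(1-t^{k_i})(1-t^{k_j})$. Hence $\bar\rho_t(\sigma)-\id$ has rank exactly one. Its image is $\cc\,\omega_\sigma$ and its kernel is $\omega_\sigma^\perp$; the latter is a hyperplane because the form is non-degenerate and $\omega_\sigma\neq0$.

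The argument then splits on whether $\omega_\sigma$ is isotropic. If $\langle\omega_\sigma,\omega_\sigma\rangle\neq0$, then $H^1(X)_t=\cc\omega_\sigma\oplus\omega_\sigma^\perp$. In this decomposition $\bar\rho_t(\sigma)$ fixes the hyperplane pointwise and multiplies $\omega_\sigma$ by $1+\Lambda_\sigma\langle\omega_\sigma,\omega_\sigma\rangle=\lambda_\sigma$. This follows from \eqref{eqn:Lambda_sigma}, or directly from Lemmas~\ref{lem:artin_self_action} and~\ref{lem:mixed_self_action}. Here $\lambda_\sigma$ is $-t^{k}$ or $t^{k_i+k_j}$, so it is a root of unity. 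To show $\lambda_\sigma\neq1$, I would note that $\lambda_\sigma=1$ would force $\Lambda_\sigma\langle\omega_\sigma,\omega_\sigma\rangle=0$, contradicting $\Lambda_\sigma\neq0$. So $\bar\rho_t(\sigma)$ is a unitary complex reflection of finite order.

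If $\langle\omega_\sigma,\omega_\sigma\rangle=0$ (the cases of Remark~\ref{rem:isotropic}), then $\omega_\sigma\in\omega_\sigma^\perp$. The map $N=\bar\rho_t(\sigma)-\id$ satisfies $N^2=\Lambda_\sigma^2\langle\cdot,\omega_\sigma\rangle\langle\omega_\sigma,\omega_\sigma\rangle\omega_\sigma=0$ and $N\neq0$. Thus $\bar\rho_t(\sigma)$ is a nontrivial unipotent map fixing the hyperplane $\omega_\sigma^\perp$ pointwise, with image of $N$ contained in that hyperplane. This is a transvection, and it is unitary because it is a monodromy map. As a consistency check, unitarity of $\omega\mapsto\omega+c\langle\omega,v\rangle v$ with $v$ isotropic requires $c+\bar c=0$, and $\Lambda_\sigma$ is indeed purely imaginary times a quantity whose real part vanishes in this case.

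The only delicate step is the first one: justifying that the polynomial formula of Theorem~A, including the isotropic case, transfers to the reduced group. I would handle this by invoking the argument in the proof of Theorem~A that establishes the identity in the ring $R$, which is what makes $\Lambda_\sigma$ well defined at isotropic $t$.
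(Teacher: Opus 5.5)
Your proposal is correct and follows essentially the paper's own argument. You transfer Theorem~A to $\bar\rho_t$ via Proposition~\ref{prop:bar_rho_geometric}, then split on $\langle\omega_\sigma,\omega_\sigma\rangle$: the decomposition $\omega_\sigma^\perp\oplus\cc\,\omega_\sigma$ handles the non-isotropic case, and $N^2=0$, $N\neq0$ (from $\Lambda_\sigma\neq0$ and non-degeneracy) handles the isotropic case. Your derivation of $\lambda_\sigma\neq1$ from $\lambda_\sigma-1=\Lambda_\sigma\langle\omega_\sigma,\omega_\sigma\rangle$ usefully fills in a point the paper only asserts.

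There are two minor slips, both in side remarks. First, in the isotropic case $\Lambda_\sigma$ is $\sqrt{-1}$ times a \emph{real} number, namely $2$ or $|1-t^{k_i}|^2$ (using $t^{k_j}=t^{-k_i}$), not times a quantity with vanishing real part. Second, when $H^1(X)_t=0$ (e.g.\ $n_t=2$ with $t^{k_\infty}=1$), the claim $\omega_\sigma\neq0$ genuinely fails rather than being vacuous; this is a degenerate case the paper's statement also overlooks.
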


\begin{proof}
By Proposition~\ref{prop:bar_rho_geometric}, $\bar\rho_t$ is the
monodromy representation of $B_{n_t,\mathcal{P}_t}$ on
$H^1(X_{vis})_t$, so \hyperref[thm:main]{Theorem~A} applies to it.
Write $T=\bar\rho_t(\sigma)$. By
\eqref{eqn:main_representation_non_isotropic}, $T$ fixes the
orthogonal complement $\omega_\sigma^\perp$ pointwise.
When $\omega_\sigma$ is non-isotropic
	($\langle \omega_\sigma, \omega_\sigma \rangle \neq 0$),
the class $\omega_\sigma$ does not lie in its orthogonal
complement and $T(\omega_\sigma) = \lambda_\sigma \omega_\sigma$
with $\lambda_\sigma \neq 1$. The space decomposes as
$H^1(X)_t = \omega_\sigma^\perp \oplus \langle \omega_\sigma \rangle$,
proving $T$ is a diagonalizable complex reflection.

When $\omega_\sigma$ is isotropic
	($\langle \omega_\sigma, \omega_\sigma \rangle = 0$),
$\lambda_\sigma = 1$ and
	\begin{align*}
		(T-I)(\omega)
		=\Lambda_\sigma
		\langle\omega,\omega_\sigma\rangle
		\omega_\sigma.
	\end{align*}
Since $\omega_\sigma \in \omega_\sigma^\perp$,
$(T - I)^2 = 0$. Here $\Lambda_\sigma\neq0$ because no block of
$\mathcal{P}_t$ is $t$-invisible, and
$\langle\,\cdot\,,\omega_\sigma\rangle$ is not identically zero
because the form is non-degenerate and $\omega_\sigma\neq0$.
Hence $T \neq I$ is a transvection.
\end{proof}

\section{Identification with the Multivariate
Burau Representation}
\label{section:multivariate_burau_identification}

We establish the connection between the geometric
monodromy representation $\rho_t$ and the
multivariate Burau representation.
By Remark~\ref{rem:factorization_invisible} and
Proposition~\ref{prop:bar_rho_geometric},
$\rho_t$ factors through $\bar\rho_t$, which is
the geometric monodromy of the reduced mixed
braid group $B_{n_t,\mathcal{P}_t}$ acting on the
visible-only cover $X_{vis}$. Therefore, to compute
the matrices and identify them with the multivariate
Burau representation, we may assume without loss of
generality for the remainder of this section that
there are no $t$-invisible blocks
($\mathcal{I}_t=\varnothing$).
Under this assumption, we have $n_t = n$,
$\mathcal{P}_t = \mathcal{P}$, $X_{vis} = X$, and
$\bar{\rho}_t = \rho_t$. This simplifies notation
while preserving full generality.
We also assume throughout this section that
$t^{k_\infty}\neq1$, as in Theorem~B.

\subsection{Matrix form of the geometric
monodromy}
\label{subsection:matrix_form_geometric}

We present the explicit matrix form of
$\rho_t$ relative to the ordered basis
$\{\omega_1, \dots, \omega_{n-1}\}$
of $H^1(X)_t$. Indeed, these classes span by
Subsection~\ref{subsection:intersection_matrix}, and our
assumptions give $\dim H^1(X)_t=n-1$.

For an Artin generator $\sigma_i$ of
$\bnp$, the geometric
action
\eqref{eqn:main_representation_non_isotropic}
evaluates to the matrix
\begin{equation}
\label{eq:geometric_artin_matrix}
	[\rho_t(\sigma_i)] =
	\begin{pmatrix}
I_{i-2} & \mathbf{0} &
		\mathbf{0} \\
			\mathbf{0} & \begin{pmatrix}
				1 & 0 & 0 \\
				1 & -t^{k_{\pt(i)}} & t^{k_{\pt(i)}} \\
				0 & 0 & 1
			\end{pmatrix} & \mathbf{0} \\
			\mathbf{0} & \mathbf{0} &
I_{n-i-2}
		\end{pmatrix}.
\end{equation}
For $i=1$ or $i=n-1$, nonexistent rows and columns in
the displayed local block are omitted.

For the mixed loop generator $A_{i,j}$,
we distinguish two cases.

\medskip\noindent
\textbf{Adjacent loop generators
($j = i+1$ in $\mathcal{P}$).}
The associated class is
$\omega_\sigma = \omega_{h_i}$, where
$h_i$ indexes the boundary between blocks
$P_i$ and $P_{i+1}$ in the partition.
By \hyperref[thm:main]{Theorem~A} and
Proposition~\ref{prop:spanning_intersections},
the non-zero entries are confined to rows
$\{h_i-1, h_i, h_i+1\}$; the submatrix
on those rows and columns
$\{h_i-1, h_i, h_i+1\}$ is
\begin{equation}
\label{eq:geometric_loop_adjacent}
	[\rho_t(A_{i,i+1})]_{\text{sub}}
	= \begin{pmatrix}
		1 & 0 & 0 \\[4pt]
		1 - t^{k_{i+1}}
		& t^{k_i+k_{i+1}}
		& t^{k_{i+1}}(1 - t^{k_i}) \\[4pt]
		0 & 0 & 1
		\end{pmatrix}.
\end{equation}
If $h_i=1$ or $h_i=n-1$, nonexistent boundary rows and
columns are omitted.

\medskip\noindent
\textbf{General loop generators
($j > i+1$ in $\mathcal{P}$).}
For a general loop generator $A_{i,j}$, the associated 
geometric class is $\omega_{i,j}$.
By Proposition~\ref{prop:general_mixed_intersections},
the intersection pairings $\lab \omega_r, \omega_{i,j} \rab$
vanish for
all interior classes $h_i < r < h_{j-1}$.
By Proposition~\ref{prop:mixed_linear_combination},
the non-spanning mixed class decomposes as a sum
of the spanning classes:
\begin{equation}
	\omega_{i,j} = \sum_{\ell=h_i}^{h_{j-1}} \omega_\ell.
\end{equation}
Substitute this linear combination into the geometric
action formula and use
Proposition~\ref{prop:general_mixed_intersections}. The
nonzero coefficients are
\begin{equation*}
	\Lambda_{A_{i,j}} \langle \omega_r, \omega_{i,j} \rangle = \begin{cases}
		1-t^{k_j} & r = h_i-1, \\[4pt]
		-t^{k_i}(1-t^{k_j}) & r = h_i, \\[4pt]
		-(1-t^{k_i}) & r = h_{j-1}, \\[4pt]
t^{k_j}(1-t^{k_i}) & r = h_{j-1}+1, \\[4pt]
		0 & \text{otherwise}.
	\end{cases}
\end{equation*}
Expanding $\omega_{i,j}$ in adjacent classes gives the
following submatrix on rows and columns $h_i-1$ through
$h_{j-1}+1$:
\begin{equation}
\label{eq:geometric_loop_general}
	[\rho_t(A_{i,j})]_{\text{sub}}
	= \begin{pmatrix}
		1 & 0 & 0 & \dots & 0 & 0 & 0\\[6pt]
		1-t^{k_j} & 1 - t^{k_i}(1-t^{k_j}) & 0 & \dots & 0 & -(1-t^{k_i}) & t^{k_j}(1-t^{k_i})\\[6pt]
		1-t^{k_j} & -t^{k_i}(1-t^{k_j}) & 1 & \dots & 0 & -(1-t^{k_i}) & t^{k_j}(1-t^{k_i})\\[6pt]
		\vdots & \vdots & \vdots & \ddots & \vdots & \vdots & \vdots \\[6pt]
		1-t^{k_j} & -t^{k_i}(1-t^{k_j}) & 0 & \dots & 1 & -(1-t^{k_i}) & t^{k_j}(1-t^{k_i})  \\[6pt]
		1-t^{k_j} & -t^{k_i}(1-t^{k_j}) & 0 & \dots & 0 & t^{k_i} & t^{k_j}(1-t^{k_i})  \\[6pt]
		0 & 0 & 0 & \dots & 0 & 0 & 1
	\end{pmatrix}.
\end{equation}
If $h_i=1$ or $h_{j-1}+1=n$, nonexistent boundary rows
and columns are omitted.

\subsection{The multivariate Burau
representation}
\label{subsection:burau_fox}

The mixed braid group
$\bnp$ acts on the free
group
$F_n
= \langle x_1, \dots, x_n \rangle$
by left automorphisms, giving an embedding
$\bnp
\hookrightarrow \aut(F_n)$.
Under this embedding, the inverse action of
an Artin generator is
\begin{equation}
\label{eq:artin_inverse_action}
	\sigma_i^{-1}(x_i)
	= x_i x_{i+1} x_i^{-1},\qquad
	\sigma_i^{-1}(x_{i+1}) = x_i,\qquad
	\sigma_i^{-1}(x_k) = x_k
	\;(k \neq i, i+1),
\end{equation}
and the inverse of a loop generator
$A_{i,j}$ is given by
\begin{equation}
\label{eq:loop_inverse_action}
A_{i,j}^{-1}(x_k) =
	\begin{cases}
x_k
		& k < h_i \;\text{or}\;
k > h_{j-1}+1,\\[4pt]
x_{h_i} x_{h_{j-1}+1}
x_{h_i}
x_{h_{j-1}+1}^{-1}
x_{h_i}^{-1}
		& k = h_i,\\[4pt]
x_{h_i} x_{h_{j-1}+1}
x_{h_i}^{-1}
		& k = h_{j-1}+1,\\[4pt]
		[x_{h_i}, x_{h_{j-1}+1}]\,
x_k\,
		[x_{h_i}, x_{h_{j-1}+1}]^{-1}
		& h_i < k < h_{j-1}+1.
	\end{cases}
\end{equation}
(see \cite[Corollary~1.8.3]{Birman}).

For a braid $\sigma$,
the universal Fox derivative
$\partial/\partial x_j$
yields the matrix
$\partial(\sigma^{-1}(x_i))/\partial x_j$
with entries in $\cc[F_n]$.
The block specialization
$\phi_{\mathcal{P}}
\colon F_n \to
\langle y_1 \rangle
\times \cdots \times
\langle y_m \rangle$,
defined by
\begin{align*}
	\phi_{\mathcal{P}}(x_k)
	= (1,\dots,1,y_{\pt(k)},1,\dots,1),
\end{align*}
with $y_{\pt(k)}$ in the $\pt(k)$-th
coordinate, is $\bnp$-invariant
(strands in the same block share the same
parameter, and mixed braids permute only
within blocks).
Applying $\phi_{\mathcal{P}}$
coefficient-wise to the universal matrix
gives the multivariate Burau representation:
\begin{equation}
\label{eq:automorphism_matrix_def}
	[\beta(\sigma)]_{i,j}
	= \phi_{\mathcal{P}}\!\left(
	\frac{\partial(\sigma^{-1}(x_i))}
	{\partial x_j} \right)
	\in \GL(n,
	\cc[y_1^{\pm 1}, \dots,
y_m^{\pm 1}]).
\end{equation}
This is the multivariate Burau representation
of $\bnp$ with $m$ formal parameters.

To connect with the geometric representation
$\rho_t$, we evaluate each formal
parameter at the fixed $d$-th root of
unity $t$ by setting
$y_r = t^{k_r}$ for $r = 1, \dots, m$.
The matrices below are written in this
evaluated form
(see \cite[eq.~3--23 and ~3--24]{Birman}),
and we write $[\beta_t(\sigma)]$ for the
matrix of a braid $\sigma$ after this
evaluation, computed with respect to
the standard basis
$\{u_1, \dots, u_n\}$ of $\cc^n$.

For an Artin generator $\sigma_i$,
the $2 \times 2$ submatrix on rows
$\{i, i+1\}$ of $[\beta_t(\sigma_i)]$ is
\begin{equation}
\label{eq:matrix_beta_artin}
	[\beta_t(\sigma_i)]_{\text{sub}}
	= \begin{pmatrix}
		1 - t^{k_{\pt(i)}}
		& t^{k_{\pt(i)}} \\
		1 & 0
	\end{pmatrix}.
\end{equation}

The mixed loop generator $A_{i,j}$ has the
block form
\begin{equation}
\label{eq:matrix_beta_explicit}
	[\beta_t(A_{i,j})] = \begin{pmatrix}
I_{h_i-1} & \mathbf{0}
		& \mathbf{0} \\
		\mathbf{0}
		& [\beta_t(A_{i,j})]_{\text{sub}}
		& \mathbf{0} \\
		\mathbf{0} & \mathbf{0}
		& I_{n - h_{j-1}-1}
	\end{pmatrix},
\end{equation}
where the submatrix spanning indices
$h_i$ to $h_{j-1}+1$ is
\begin{equation}
\label{eq:matrix_beta_sub}
	[\beta_t(A_{i,j})]_{\text{sub}}
	= \begin{pmatrix}
		1 - t^{k_i}+ t^{k_i+k_j}
		& 0 & 0 & \dots & 0 & 0
		& t^{k_i}(1-t^{k_i}) \\
		(1-t^{k_{i+1}})(1-t^{k_j})
		& 1 & 0 & \dots & 0 & 0
		& -(1-t^{k_{i+1}})(1-t^{k_i})
		\\
		(1-t^{k_{\pt(h_{i}+2)}})(1-t^{k_j})
		& 0 & 1 &\dots & 0 & 0
		& -(1-t^{k_{\pt(h_{i}+2)}})(1-t^{k_i})
		\\
		\vdots & \vdots & \vdots
		& \ddots & \vdots \\
		(1-t^{k_{\pt(h_{j-1}-1)}})(1-t^{k_j})
		& 0 & 0 & \dots & 1 & 0
		& -(1-t^{k_{\pt(h_{j-1}-1)}})(1-t^{k_i})
		\\
		(1-t^{k_{j-1}})(1-t^{k_j})
		& 0 & 0 & \dots & 0 & 1
		& -(1-t^{k_{j-1}})(1-t^{k_i}) \\
		1 - t^{k_j}
		& 0 & 0 & \dots & 0 & 0
		& t^{k_i}
	\end{pmatrix}.
\end{equation}

Birman's explicit computation following
\cite[eq.~3--24]{Birman} contains a
typographical error.
The submatrix~\eqref{eq:matrix_beta_sub}
uses the corrected form.
The matrices displayed in
\eqref{eq:automorphism_matrix_def}--%
\eqref{eq:matrix_beta_sub} are the
unreduced $n \times n$ Burau matrices.

\begin{lemma}
\label{lem:invariant_vector}
Let $\{u_1, \dots, u_n\}$ be the
standard basis for the unreduced
Burau space $\cc^n$. The vector
$v_{\mathrm{inv}}
	= \sum_{j=1}^n
	(1 - t^{k_{\pt(j)}}) u_j$
spans a $1$-dimensional invariant
subspace under the unreduced action
$[\beta_t(\sigma)]$.
\end{lemma}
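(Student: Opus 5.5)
The plan is to derive invariance from the fundamental formula of Fox calculus rather than checking the explicit matrices entry by entry. Recall that for every word $w \in F_n$,
\begin{align*}
	\sum_{j=1}^n \frac{\partial w}{\partial x_j}\,(x_j - 1) = w - 1
\end{align*}
in $\zz[F_n]$.

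\textbf{Step 1: the fixed-vector identity.} Apply the formula to $w = \sigma^{-1}(x_i)$ for a braid $\sigma \in \bnp$. Then apply the ring homomorphism $\phi_{\mathcal P}$ and the evaluation $y_r = t^{k_r}$. Since $\phi_{\mathcal P}(x_j) = y_{\pt(j)} \mapsto t^{k_{\pt(j)}}$, this gives
\begin{align*}
	\sum_{j=1}^n [\beta_t(\sigma)]_{i,j}\,\bigl(t^{k_{\pt(j)}} - 1\bigr)
	= \phi_{\mathcal P}\bigl(\sigma^{-1}(x_i)\bigr)\big|_{y_r = t^{k_r}} - 1 .
\end{align*}
By Artin's theorem (visible explicitly in \eqref{eq:artin_inverse_action} and \eqref{eq:loop_inverse_action}), $\sigma^{-1}(x_i)$ is a conjugate of $x_{\tau'(i)}$, where $\tau'$ is the permutation induced by $\sigma^{-1}$. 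Because $\sigma \in \bnp$, we have $\pt(\tau'(i)) = \pt(i)$. The image $\phi_{\mathcal P}$ lies in an abelian group, so conjugation disappears, and the right-hand side equals $t^{k_{\pt(i)}} - 1$. Multiplying by $-1$ gives $[\beta_t(\sigma)]\,v_{\mathrm{inv}} = v_{\mathrm{inv}}$, with the matrix acting on column vectors.

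\textbf{Step 2: sanity check of the convention.} I would confirm this convention on \eqref{eq:matrix_beta_artin}. Writing $y = t^{k_{\pt(i)}}$, row $i$ gives $(1-y)(1-y) + y(1-y) = 1-y$ and row $i+1$ gives $1-y$, as required. A similar check of the first and last rows of \eqref{eq:matrix_beta_sub} guards against the typographical issue mentioned for Birman's formula. This convention check is the only real subtlety I expect: whether the representation acts on rows or on columns, and whether the Fox matrix uses $\sigma$ or $\sigma^{-1}$. The Fox identity makes the column-vector statement hold for every braid simultaneously, so compatibility with products needs no separate argument.

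\textbf{Step 3: conclusion.} Since every $[\beta_t(\sigma)]$, and in particular every generator and its inverse, fixes $v_{\mathrm{inv}}$, the line $\cc\, v_{\mathrm{inv}}$ is invariant. It is $1$-dimensional because $v_{\mathrm{inv}} \neq 0$. This holds because, under the standing assumption of this section, no block is $t$-invisible, so every coordinate $1 - t^{k_{\pt(j)}}$ is nonzero; in general it suffices that some block is visible, which holds since $t \neq 1$ and $\gcd(d, k_1, \dots, k_m) = 1$.
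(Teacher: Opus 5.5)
Your proof is correct, and it takes a different route from the paper. The paper's proof is a one-line assertion that the identity $\sum_j [\beta_t(\sigma)]_{ij}(1-t^{k_{\pt(j)}})=1-t^{k_{\pt(i)}}$ can be verified directly. That presumably means inspecting the explicit generator matrices \eqref{eq:matrix_beta_artin} and \eqref{eq:matrix_beta_sub}; invariance under all of $\bnp$ then follows because a vector fixed by every generator is fixed by the group they generate.

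You instead do the following:
\begin{itemize}
\item push the fundamental formula $w-1=\sum_j(\partial w/\partial x_j)(x_j-1)$, with $w=\sigma^{-1}(x_i)$, through $\phi_{\mathcal P}$ and the evaluation $y_r=t^{k_r}$;
\item use that $\sigma^{-1}(x_i)$ is conjugate to a generator lying in the same block as $x_i$. Whether the permutation read off from the free-group action is $\Psi(\sigma)$ or its inverse does not matter, since both preserve blocks.
\end{itemize}

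Your route has several advantages. It gives the identity for every $\sigma\in\bnp$ at once. It already holds over $\cc[y_1^{\pm1},\dots,y_m^{\pm1}]$ before specialization, with $v_{\mathrm{inv}}$ replaced by $\sum_j(1-y_{\pt(j)})u_j$, and it explains where $v_{\mathrm{inv}}$ comes from. It also relies only on the definition \eqref{eq:automorphism_matrix_def}, not on the explicit matrices. That matters given the typographical error the paper flags in Birman's formula.

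The paper's direct check, in turn, doubles as a consistency check on the explicit matrices used later in the proof of Theorem~B. Your Step~2 recovers part of that benefit, and the first, middle and last rows of \eqref{eq:matrix_beta_sub} do satisfy the identity.

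Finally, your Step~3 supplies something the paper's proof leaves implicit: that $v_{\mathrm{inv}}\neq0$, so the invariant line really is $1$-dimensional. Your argument is right: if $t^{k_r}=1$ for all $r$, then $\gcd(d,k_1,\dots,k_m)=1$ forces $t=1$.
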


\begin{proof}
One verifies directly that
	\begin{align*}
		1 - t^{k_{\pt(i)}} 
		= \sum_{j=1}^n
		[\beta_t(\sigma)]_{ij}
		(1 - t^{k_{\pt(j)}}).
	\end{align*}
Hence $v_{\mathrm{inv}}$ is an
eigenvector of $[\beta_t(\sigma)]$
with eigenvalue $1$.
\end{proof}

For each $1 \le i \le n-1$, the
reduced basis vectors are
\begin{equation}
\label{eq:reduced_basis}
v_i = u_{i+1} - t^{k_{\pt(i)}} u_i.
\end{equation}
Put $y_j=t^{k_{\pt(j)}}$ and
$W=\operatorname{span}\{v_1,\dots,v_{n-1}\}$.
The vectors $v_1,\dots,v_{n-1}$ are linearly independent.
Define $L\colon\cc^n\to\cc$ by
\begin{align*}
	L(u_j)=\prod_{a<j}y_a.
\end{align*}
Then $L(v_i)=0$ for every $i$. Since $L(u_1)=1$, both
$W$ and $\ker L$ have dimension $n-1$, so $W=\ker L$.
Moreover, the telescoping sum gives
\begin{align*}
	L(v_{\mathrm{inv}})
	&=\sum_{j=1}^n
	  \left(\prod_{a<j}y_a\right)(1-y_j) \\
	&=1-\prod_{j=1}^n y_j
	 =1-t^{-k_\infty}\neq0.
\end{align*}
Hence
\begin{align*}
	\cc^n
	=W
	\oplus \cc v_{\mathrm{inv}}.
\end{align*}
A direct check from
\eqref{eq:matrix_beta_artin} and
\eqref{eq:matrix_beta_sub} shows that $W$ is invariant.
The restriction to $W$ is therefore naturally isomorphic
to the usual reduced quotient
$\cc^n/\cc v_{\mathrm{inv}}$; we denote it by
$\wt\beta_t$.

\subsection{The isomorphism theorem}
\label{subsection:burau_isomorphism}

\begin{thmB}[Isomorphism with the
Multivariate Burau Representation]
\label{thm:burau}
Let $t \in \mu'(d)$ with
$t^{k_\infty} \neq 1$, and let
$\bar{\rho}_t$ be the factored
representation defined in
Remark~\ref{rem:factorization_invisible}.
Then $\bar{\rho}_t$ is isomorphic
to the reduced multivariate Burau
representation $\wt\beta_t$ of
$B_{n_t, \mathcal{P}_t}$.
\end{thmB}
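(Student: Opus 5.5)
The plan is to produce an explicit intertwiner between the matrices of Subsection~\ref{subsection:matrix_form_geometric} and the matrices of $\wt\beta_t$ written in the reduced basis $v_1,\dots,v_{n-1}$ of $W$.

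By Remark~\ref{rem:factorization_invisible} and Proposition~\ref{prop:bar_rho_geometric}, I may assume $\mathcal{I}_t=\varnothing$. Then $\bar\rho_t=\rho_t$ and $B_{n_t,\mathcal{P}_t}=\bnp$. Since $t^{k_\infty}\neq1$, $\{\omega_1,\dots,\omega_{n-1}\}$ is a basis of $H^1(X)_t$ (Proposition~\ref{thm:spanning_via_intersection}), and $W=\ker L$ is a complement to $\cc v_{\mathrm{inv}}$. Both representations are homomorphisms defined on all of $\bnp$. It therefore suffices to find one invertible matrix $D$ with
\begin{align*}
D\,[\rho_t(\sigma)]\,D^{-1}=[\wt\beta_t(\sigma)]
\end{align*}
for every generator $\sigma\in\{\sigma_i\}\cup\{A_{i,j}\}$ of \eqref{eqn:GeneratorofMixedBraidGroup}. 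No relations of Manfredini need to be checked.

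First I would compute $[\wt\beta_t(\sigma)]$ in the basis $v_i=u_{i+1}-t^{k_{\pt(i)}}u_i$. Both sides must use the same row convention: rows record Fox derivatives of $\sigma^{-1}(x_i)$, and the geometric matrices \eqref{eq:geometric_artin_matrix}--\eqref{eq:geometric_loop_general} are likewise written row-wise. For an Artin generator, applying \eqref{eq:matrix_beta_artin} to $v_{i-1},v_i,v_{i+1}$ should give a local $3\times3$ block of the same shape as \eqref{eq:geometric_artin_matrix}: an eigenvalue $-t^{k_{\pt(i)}}$ on the middle row and off-diagonal entries that are monomials or factors $(1-t^{k_r})$. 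Comparing the two blocks entry by entry should force $D$ to be diagonal. Its entries $c_i$ would be fixed recursively by the ratios of corresponding off-diagonal entries. My guess is $c_{i+1}/c_i=1$ inside a block and, at a block boundary $h_r$, a ratio involving $(1-t^{k_r})$ or $(1-t^{k_{r+1}})$, possibly times a power of $t$. I would next run the same comparison for the adjacent loop generators, using \eqref{eq:geometric_loop_adjacent} against \eqref{eq:matrix_beta_sub} with $j=i+1$. This pins down $c_{h_r\pm1}/c_{h_r}$ completely, and I must check that the constraints coming from the Artin and adjacent loop generators are consistent.

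The main obstacle is the general loop generators $A_{i,j}$ with $j>i+1$. They are conjugates of $\sigma_{h_i}^2$ by $\beta_{i,j}$, but $\beta_{i,j}\notin\bnp$, so conjugation inside $\bnp$ cannot reduce them to adjacent ones. I would verify them directly: transform \eqref{eq:matrix_beta_sub} to the reduced basis, conjugate \eqref{eq:geometric_loop_general} by the diagonal $D$ found above, and compare. The interior rows of \eqref{eq:matrix_beta_sub} carry the varying factors $(1-t^{k_{\pt(k)}})$. In the basis $v$ these should telescope, thanks to the relation $v_k=u_{k+1}-t^{k_{\pt(k)}}u_k$ and to $L(v_k)=0$, into the constant rows of \eqref{eq:geometric_loop_general}. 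If direct bookkeeping becomes unwieldy, the fallback is to check the identity $\rho_t(A_{i,j})(\omega)=\omega+\Lambda\langle\omega,\omega_{i,j}\rangle\omega_{i,j}$ on the Burau side instead. For this I would transport the Hermitian form via $D$ and show that $\wt\beta_t(A_{i,j})$ is the reflection or transvection along the vector $D\sum_{\alpha=h_i}^{h_{j-1}}\omega_\alpha$, using Proposition~\ref{prop:mixed_linear_combination}. Once all generators match, $D$ is the required isomorphism $\bar\rho_t\cong\wt\beta_t$.
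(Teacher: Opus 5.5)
Your plan matches the paper's proof: reduce to the visible case, pass to the basis $v_i=u_{i+1}-t^{k_{\pt(i)}}u_i$ of $W$, and check an explicit intertwiner generator by generator, treating the general $A_{i,j}$ directly via $S_{i,j}=\sum_{\ell=h_i}^{h_{j-1}}v_\ell$ and Proposition~\ref{prop:mixed_linear_combination}. The one difference is that the normalizing constants in \eqref{eqn:artin_atom} and \eqref{eqn:mixed_atom} already make $D=I$, i.e.\ $\omega_r\mapsto v_r$ (both families of matrices act on column vectors, with columns giving images of basis vectors), so every ratio $c_{i+1}/c_i$ equals $1$, including at block boundaries, and no $(1-t^{k_r})$ factors appear there.
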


\begin{proof}
As established at the beginning of the section,
we may assume $\mathcal{I}_t = \varnothing$, so
$\bar\rho_t=\rho_t$ and it suffices to prove
$\rho_t\cong\wt\beta_t$ on $\bnp$.

Let $P$ be the $n \times n$ matrix
with columns
$v_1, \dots, v_{n-1},
v_{\mathrm{inv}}$ as defined in
\eqref{eq:reduced_basis}.
It suffices to verify
\begin{align*}
	[\beta_t(\sigma)]P
	=P\bigl(\rho_t(\sigma)\oplus1\bigr)
\end{align*}
for each generator $\sigma\in\bnp$. By
Lemma~\ref{lem:invariant_vector}, every generator fixes
$v_{\mathrm{inv}}$, so it remains to compare its action
on $v_r$ with the geometric action on $\omega_r$.
Terms with indices outside $\{1,\dots,n-1\}$ are omitted
below.

\medskip\noindent
\textbf{Artin generators.}
Direct substitution in \eqref{eq:matrix_beta_artin} gives
\begin{align*}
	\beta_t(\sigma_i)v_{i-1}&=v_{i-1}+v_i, \\
	\beta_t(\sigma_i)v_i&=-t^{k_{\pt(i)}}v_i, \\
	\beta_t(\sigma_i)v_{i+1}
	&=v_{i+1}+t^{k_{\pt(i)}}v_i,
\end{align*}
and all other $v_r$ are fixed. These are precisely the
columns of \eqref{eq:geometric_artin_matrix}.

\medskip\noindent
\textbf{Adjacent loop generators.}
Evaluation on the affected basis vectors gives
\begin{align*}
	\beta_t(A_{i,i+1}) v_{h_i-1} &= v_{h_i-1} + (1-t^{k_{i+1}}) v_{h_i}, \\[4pt]
	\beta_t(A_{i,i+1}) v_{h_i} &= t^{k_i+k_{i+1}} v_{h_i}, \\[4pt]
	\beta_t(A_{i,i+1}) v_{h_i+1} &= v_{h_i+1} + t^{k_{i+1}}(1-t^{k_i}) v_{h_i}.
\end{align*}
All other $v_r$ are fixed, and these formulas agree with
\eqref{eq:geometric_loop_adjacent}.

\medskip\noindent
\textbf{General loop generators.}
For $j>i+1$, set
$S_{i,j}=\sum_{\ell=h_i}^{h_{j-1}}v_\ell$. Then
\begin{align*}
	\beta_t(A_{i,j})v_{h_i-1}
	&=v_{h_i-1}+(1-t^{k_j})S_{i,j}, \\
	\beta_t(A_{i,j})v_{h_i}
	&=v_{h_i}-t^{k_i}(1-t^{k_j})S_{i,j}, \\
	\beta_t(A_{i,j})v_{h_{j-1}}
	&=v_{h_{j-1}}-(1-t^{k_i})S_{i,j}, \\
	\beta_t(A_{i,j})v_{h_{j-1}+1}
	&=v_{h_{j-1}+1}+t^{k_j}(1-t^{k_i})S_{i,j}.
\end{align*}
All remaining $v_r$ are fixed. These formulas agree with
\eqref{eq:geometric_loop_general}.

Thus $P$ intertwines the Burau and geometric matrices for
every generator, and restriction to
$\operatorname{span}\{v_1,\dots,v_{n-1}\}$ gives
$\wt\beta_t\cong\rho_t$ in the visible-only case. By the
reduction at the beginning of the section, this is precisely
$\wt\beta_t\cong\bar\rho_t$ for the original cover.
\end{proof}

When no block is $t$-invisible,
$\mathcal{I}_t = \varnothing$,
$\forgmap_t = \id$, and
$\bar{\rho}_t = \rho_t$.
Theorem~B then asserts
$\rho_t \cong \wt\beta_t$
on $B_{n, \mathcal{P}}$.

\begin{remark}
In the simple-cover case, McMullen~\cite{MR3020148} uses
the $q$-eigenspace of $T^*$, whereas we use the
$t$-eigenspace of $(T^{-1})^*$. Thus $q=t^{-1}$, and
McMullen's Theorem~5.5 gives
$\rho_t\cong\wt\beta_{t^{-1}}^\vee
\cong\wt\beta_t$, in agreement with Theorem~B.
\end{remark}

\subsection*{Acknowledgements}
The authors thank Chitrabhanu Chaudhuri for discussions
that contributed to this work.

\printbibliography

\end{document}